\def\CC{\mathbb{C}}
\def\RR{\mathbb{R}}
\def\NN{\mathbb{N}}
\def\DD{\mathbb{D}}
\def\BB{\mathbb{B}}
\def\TT{\mathbb{T}}
\def\bs{\boldsymbol}
\def\lon{\longrightarrow}
\def\CDD{\overline{\DD}}
\def\wi{\widetilde}
\def\pa{\partial}
\def\ov{\overline}
\def\su{\subset}
\def\lan{\langle}
\def\ran{\rangle}
\def\sm{\setminus}
\def\la{\lambda}
\def\al{\alpha}
\def\OO{{\mathcal O}}
\def\cC{{\mathcal C}}
\def\ee{{\mathcal E}}
\def\sss{\mathcal S}
\def\psh{{\mathcal{PSH}}}
\def\eps{\varepsilon}
\DeclareMathOperator{\id}{id}
\DeclareMathOperator{\re}{Re}
\DeclareMathOperator{\im}{Im}
\DeclareMathOperator{\dist}{dist}
\DeclareMathOperator{\aut}{Aut}
\renewcommand{\phi}{\varphi}
\newtheoremstyle{prop}{}{}{\it}{}{\bf}{.}{ }{}
\newtheoremstyle{thm}{}{}{\it}{}{\bf}{.}{ }{}
\newtheoremstyle{lem}{}{}{\it}{}{\bf}{.}{ }{}
\newtheoremstyle{cor}{}{}{\it}{}{\bf}{.}{ }{}
\newtheoremstyle{rem}{}{}{\normalfont}{}{\bf}{.}{ }{}
\theoremstyle{prop}
\newtheorem{prop}{Proposition}[section]
\theoremstyle{thm}
\newtheorem{thm}[prop]{Theorem}
\theoremstyle{lem}
\newtheorem{lem}[prop]{Lemma}
\theoremstyle{cor}
\newtheorem{cor}[prop]{Corollary}
\theoremstyle{rem}
\newtheorem{rem}[prop]{Remark}
\numberwithin{equation}{section}
\begin{document}
\title[(Weak) $m$-extremals and $m$-geodesics]{(Weak) $\bs m$-extremals and $\bs m$-geodesics}
\author{Tomasz Warszawski}
\subjclass[2010]{32F45, 32E30, 30E05, 93B50, 32A07, 30J10}
\keywords{(weak) extremals, geodesics, interpolation, invariant functions, Lempert theorem, quasi-balanced domains, complex ellipsoids, Blaschke products}
\thanks{The work is supported by the grant of the Polish National Science Centre no. DEC-2012/05/N/ST1/03067}
\email{warszawski.tomasz@gmail.com}
\begin{abstract}
We present a collection of results on (weak) $m$-extremals and $m$-geodesics, concerning general properties, the planar case, quasi-balanced pseudoconvex domains, complex ellipsoids, the Euclidean ball and boundary properties. We prove 3-geodesity of 3-extremals in the Euclidean ball. Equivalence of weak $m$-extremality and $m$-extremality in some class of convex complex ellipsoids, containing symmetric ones and $\mathcal C^2$-smooth ones is showed. Moreover, first examples of 3-extremals being not 3-geodesics in convex domains are given.
\end{abstract}
\maketitle
\tableofcontents
\section{Introduction}
\subsection{Idea of (weak) $\bs m$-extremals and $\bs m$-geodesics}
This paper may be treated as a continuation of \cite{kz}, where these objects were investigated from the point of view of geometric function theory. The notion of $m$-extremals comes from \cite{aly}~(cf.~\cite{aly1}) and was used to studying interpolation problems in the symmetrised bidisc --- a~special domain appearing in what is known as $\mu$-synthesis. It is a kind of approach to the spectral Nevanlinna-Pick problem (see also \cite{kos}), in which domains like the tetrablock and the pentablock occur naturally. They have been intensively studied of late in geometric function theory. However, $m$-extremals are in some sense too restricted. Therefore, it was natural to define \emph{weak} $m$-extremals; on the other side, a~stronger notion of $m$-geodesics let us produce $m$-extremals efficiently (\L.~Kosi\'nski and W.~Zwonek introduced both notions).

G. Pick \cite{pic} was the first who observed that Blaschke products have some extremal property in the unit disc $\DD$. The result formulated in our language claims that a holomorphic function $f:\DD\lon\DD$ is a (weak) $m$-extremal if and only if it is a~non-constant Blaschke product of degree at most $m-1$. More famous Pick (or Nevanlinna-Pick) theorem \cite{mar, nev, nev1} describes situations, in which a given interpolation problem in $\DD$ has a~solution. These results were obtained by the Schur's reduction \cite{gar, sch}.

A more general view on extremal problems (using special functionals) was presented in \cite{pol}. A. Edigarian developed these ideas in the crucial work \cite{e}, where among others the necessary form of weak $m$-extremals in complex ellipsoids is given. We will strongly use that result. A related problem with infinitely many interpolation data was studied in~\cite{at}.

There is a significant relationship between discussed objects and the theory of holomorphically contractible functions \cite{jp1, jp, kob} --- weak $m$-extremals (resp. $m$-geodesics) generalize classical Lempert extremals (resp. geodesics).

\subsection{Notation and definitions}
In what follows and if not mentioned otherwise, we assume that $m\ge 2$ is natural. Let $D\su\CC^n$ be a domain. Denote by $\OO(\CDD,D)$ the set of mappings holomorphic in a neighborhood of $\CDD$ with values in $D$. Moreover, let $\la_1,\ldots,\la_m\in\DD$ be distinct points (distinct = pairwise distinct).

A holomorphic mapping $f:\DD\lon D$ is called a \emph{weak $m$-extremal} for $\la_1,\ldots,\la_m$ if there is no map $h\in\OO(\CDD,D)$ such that $h(\la_j)=f(\la_j)$, $j=1,\ldots,m$. Naturally, weak $m$-extremality means weak $m$-extremality for some $\la_1,\ldots,\la_m$.

If the above condition is satisfied for any different numbers $\la_1,\ldots,\la_m\in\DD$, we say that $f$ is an \emph{$m$-extremal}.

Note that a map $f\in\OO(\DD,D)$ is a weak $m$-extremal for $\la_1,\ldots,\la_m$ if and only if there is no $g\in\OO(\DD,D)$ with $g(\la_j)=f(\la_j)$, $j=1,\ldots,m$, and $g(\DD)\su\su D$ (cf. Lemma \ref{28}$(a)$).

For $\alpha\in\DD$ define the \emph{M\"obius function} $$\label{mob}m_\alpha(\la):=\frac{\la-\alpha}{1-\ov\alpha\la},\quad\la\in\DD.$$ We shall consider finite \emph{Blaschke products}, that is functions $$B:=\zeta\prod_{j=1}^km_{\al_j},$$ where $k\in\NN_0$, $\al_j\in\DD$, $\zeta\in\TT:=\pa\DD$ (we assume that $0\notin\NN$ and $\NN_0:=\NN\cup\{0\}$). The number $k$ is said to be a \emph{degree} and is denoted by $\deg B$. In case $k=0$, the function $B$ is a unimodular constant $\zeta$.

A holomorphic mapping $f:\DD\lon D$ is said to be an \emph{$m$-geodesic} if there exists $F\in\OO(D,\DD)$ such that $F\circ f$ is a non-constant Blaschke product of degree at most $m-1$. We call such $F$ an \emph{$m$-left inverse}.

Note that a holomorphic map is a weak 2-extremal (resp. a 2-geodesic) if and only if it is a Lempert extremal (resp. a geodesic). Recall that a mapping $f\in\OO(\DD,D)$ is a \emph{Lempert extremal} if $\bs\ell_D(f(\la_1),f(\la_2))=\bs p(\la_1,\la_2)$ for some different $\la_1,\la_2\in\DD$, where $\bs p$ stands for the Poincar\'e distance on $\DD$ and $$\bs\ell_{D}(z,w):=\inf\{\bs p(\la_1,\la_2):\la_1,\la_2\in\DD\textnormal{ and }\exists f\in\mathcal{O}(\mathbb{D},D):f(\la_1)=z,\ f(\la_2)=w\}$$ is the \emph{Lempert function} of $D$. We call $f$ a \emph{geodesic} if $\bs c_D(f(\la_1),f(\la_2))=\bs p(\la_1,\la_2)$ for any (equivalently for some different) $\la_1,\la_2\in\DD$, where $$\bs c_D(z,w):=\sup\{\bs p(F(z),F(w)):F\in\mathcal{O}(D,\DD)\}$$ is the \emph{Carath\'eodory pseudodistance} of $D$. This is exactly the case, when $f$ has a~2-left inverse.

From the description of $m$-extremals in $\DD$ it follows that in any domain $m$-geodesity implies $m$-extremality. It is obvious that for all considered notions the `level' $m$ implies $m+1$. They are invariant under biholomorphisms and compositions with automorphisms of $\DD$.

\subsection{Main results}
It is known from \cite{kz} that in the Euclidean ball we have $m$-extremals being not $m$-geodesics for $m\ge 4$. The missing case is solved in Theorem \ref{32}: \textbf{any 3-extremal of $\BB_n$ is a 3-geodesic}.

By the Lempert theorem \cite{Lem2, Lem3} (cf. \cite[Chapter 11]{jp} and \cite{rw}), any weak 2-extremal of a convex domain is a 2-geodesic, in particular a 2-extremal. Thus the following question about a `weak' generalization of this result seems to be~important: whether a weak $m$-extremal, $m\ge 3$, of a convex domain has to be an $m$-extremal. We do not know it, however we have found \textbf{convex domains, in which for any $\bs{m\ge 3}$ there exists an $\bs m$-extremal being not an $\bs m$-geodesic} (first convex examples for $m=3$). One of them is the complex ellipsoid $\ee(1/2,1/2)$, where $\ee(p):=\{z\in\CC^n:|z_1|^{2p_1}+\ldots+|z_n|^{2p_n}<1\}$ (Proposition \ref 1). Another example follows from Proposition \ref{ab}.

The next results, we would like to draw attention to, are Propositions \ref{31} and \ref{30}. We define some family $\ee(p)$, $p\in\sss_n$, which contains all symmetric convex and all $\cC^2$-smooth complex ellipsoids. It turns out that \textbf{in $\bs{\ee(p)}$ such that $\bs{p\in\sss_n}$, weak $\bs m$-extremality equals $\bs m$-extremality}. We get moreover some $l$-extremality of all weak $m$-extremals for $p$ such that $p_j/q_j\in\NN$, $j=1,\ldots,n$, where $q\in\sss_n$ ($l$~is bounded by a~function of $m$ and $p$).

We also deal with dividing $m$-geodesics of quasi-balanced pseudoconvex domains by the identity function on the unit disc. The aim is to decide whether the new map is an $(m-1)$-geodesic. The reasoning used in the proof of \cite[Theorem 3]{ekz} gives a positive answer for $m=3$. Most interesting is the balanced case, in which we give convex counterexamples for $m\ge 4$ (Corollary \ref{a} and Propositions \ref{ab}, \ref{40}).

Some of the results answer partially to questions posed at the end. Their occurrences in the text are marked (P$n$).

We have already two general questions: whether it is possible to find a 2-extremal being not a 2-geodesic (P\ref{2e2g}), and whether there exists an $m$-extremal, which is not any $k$-geodesic (P\ref{mk}).

\section{General properties and the planar case}
Denote by $|\cdot|$ the Euclidean norm and $\|f\|_S:=\sup_S|f|$.

\begin{lem}\label{28}
Let $D\su\CC^n$ be a domain and let $\la_1,\ldots,\la_m\in\DD$ be different points.
\begin{enumerate}[$(a)$]
\item Fix $z_1,\ldots,z_n\in D$. Then there exists $h\in\OO(\CDD,D)$ such that $h(\la_j)=z_j$, $j=1,\ldots,m$, if and only if there exists $g\in\OO(\DD,D)$ such that $g(\la_j)=z_j$, $j=1,\ldots,m$, and $g(\DD)\su\su D$.
\item\label{29} Let $\DD\ni\la_j^{(k)}\to\la_j$, $k\to\infty$, and let $f:\DD\lon D$ be a weak $m$-extremal for $\la_1^{(k)},\ldots,\la_m^{(k)}$. Then $f$ is a weak $m$-extremal for $\la_1,\ldots,\la_m$.
\item Assume that $f_k,f\in\OO(\DD,D)$, $f_k(\la_j)\to f(\la_j)$, $k\to\infty$, and $f_k$ are weak $m$-extremals for $\la_1,\ldots,\la_m$. Then $f$ is, as well.
\item If $\OO(\DD,D)\ni f_k\to f\in\OO(\DD,D)$ pointwise and any $f_k$ is an $m$-extremal, then $f$ is, as well.
\end{enumerate}
\end{lem}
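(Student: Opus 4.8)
The plan is to reduce the $m$-extremal statement $(d)$ to the weak version $(c)$, which is the only part carrying real content here. Recall that $m$-extremality of $f$ means weak $m$-extremality for \emph{every} tuple of distinct points, whereas $(c)$ concerns a single fixed tuple. So I would fix an arbitrary tuple of distinct points $\la_1,\ldots,\la_m\in\DD$ and show that $f$ is weak $m$-extremal for it. Since each $f_k$ is $m$-extremal, each $f_k$ is in particular weak $m$-extremal for this fixed tuple; and pointwise convergence $f_k\to f$ gives $f_k(\la_j)\to f(\la_j)$ for $j=1,\ldots,m$. This is exactly the hypothesis of $(c)$, which then yields weak $m$-extremality of $f$ for $\la_1,\ldots,\la_m$. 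As the tuple was arbitrary, $f$ is $m$-extremal.

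This leaves $(c)$, which I would prove by contradiction. Suppose $f$ is not weak $m$-extremal for $\la_1,\ldots,\la_m$; then there is $h\in\OO(\CDD,D)$ with $h(\la_j)=f(\la_j)$. The idea is to perturb $h$ so that it matches the values $f_k(\la_j)$ instead, thereby contradicting weak $m$-extremality of $f_k$ for large $k$. Concretely, I would take the (vector-valued) Lagrange interpolation polynomial $L_k$ of degree $\le m-1$ determined by $L_k(\la_j)=f_k(\la_j)-f(\la_j)$. Since the Lagrange basis depends only on the fixed nodes $\la_1,\ldots,\la_m$ while the interpolated values tend to $0$, we get $\|L_k\|_{\CDD}\to 0$. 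Then $h_k:=h+L_k$ satisfies $h_k(\la_j)=f_k(\la_j)$.

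The hard part will be making sure that $h_k\in\OO(\CDD,D)$. Here I would use that $h$ is holomorphic on a neighborhood of $\CDD$ with $h(\CDD)$ a compact subset of the open set $D$, so some uniform neighborhood of $h(\CDD)$ still lies in $D$; for $k$ large enough that $\|L_k\|_{\CDD}$ is smaller than this gap, continuity and compactness guarantee that $h_k$ maps a (possibly smaller) neighborhood of $\CDD$ into $D$, i.e. $h_k\in\OO(\CDD,D)$, contradicting weak $m$-extremality of $f_k$. Alternatively, via part $(a)$ one may work with a map $g\in\OO(\DD,D)$ satisfying $g(\DD)\su\su D$ and perturb that instead, which makes the available room inside $D$ transparent. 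Either way, the only genuine subtlety is this quantitative perturbation step; the reduction from $(d)$ to $(c)$ itself is immediate.
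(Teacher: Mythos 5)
Your treatment of parts $(c)$ and $(d)$ is correct and is essentially the paper's own argument: the paper likewise disposes of $(d)$ by the one-line reduction to $(c)$, and proves $(c)$ by adding to an interpolating map with relatively compact image the Lagrange interpolation polynomial $P_w$ with values $w_j=f_k(\la_j)-f(\la_j)\to 0$ at the fixed nodes --- exactly your $L_k$. The only cosmetic difference is that the paper perturbs a map $g\in\OO(\DD,D)$ with $g(\DD)\su\su D$ (your ``alternative'' route), whereas you perturb $h\in\OO(\CDD,D)$ directly; both supply the quantitative room you need, since $h(\CDD)$ is a compact subset of $D$ and $\|L_k\|$ can be made small on a closed disc of radius $r'>1$.

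The genuine gap is coverage: the lemma has four parts, and you prove only two, dismissing $(a)$ and $(b)$ as carrying no real content --- while at the same time invoking $(a)$ as a tool in your alternative route, so it cannot be taken for granted. The nontrivial direction of $(a)$ (from $g$ with $g(\DD)\su\su D$ to $h\in\OO(\CDD,D)$) is handled in the paper by the same perturbation trick applied to the dilation $g_r(\la):=g(\la/r)$, $r>1$ close to $1$, corrected by $P_{w(r)}$ with $w_j(r):=g(\la_j)-g_r(\la_j)\to 0$ as $r\to 1$. Part $(b)$ is a statement about \emph{varying} nodes and needs one extra observation your fixed-node argument does not provide: the Lagrange basis must now be built on $\la_1^{(k)},\ldots,\la_m^{(k)}$, and its sup norms stay bounded as $k\to\infty$ precisely because the limit points $\la_1,\ldots,\la_m$ are pairwise distinct, so the denominators $\la_j^{(k)}-\la_l^{(k)}$ are bounded away from $0$; then, given $h$ interpolating $f$ at the limit nodes with $h(\DD)\su\su D$, the correction values $w_j:=h(\la_j)-h(\la_j^{(k)})+f(\la_j^{(k)})-f(\la_j)$ tend to $0$ and one contradicts weak $m$-extremality of $f$ for $\la_1^{(k)},\ldots,\la_m^{(k)}$ for large $k$. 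Since you already command the perturbation mechanism, both omissions are routine to repair, but as written the proposal proves only half of the stated lemma.
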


\begin{proof}
Let $w_1,\ldots,w_m\in\CC^n$, $w:=(w_1,\ldots,w_m)$. The polynomial mapping $$P_w(\la):=\sum_{j=1}^m\prod_{k\neq j}\frac{\la-\la_k}{\la_j-\la_k}w_j,\quad\la\in\CC,$$ has the property that $P_w(\la_l)=w_l$, $l=1,\ldots,m$, and $\|P_w\|_S\to 0$ if $w\to 0$, for any $\emptyset\neq S\su\su\CC$.

$(a)$ If we have $g$, then consider $g_r(\la):=g(\la/r)$, $\la\in r\DD$, $r>1$. As $$g_r(\la_j)+g(\la_j)-g_r(\la_j)=z_j,$$ we put $w_j=w_j(r):=g(\la_j)-g_r(\la_j)$ and $h:=g_r+P_{w(r)}$ for $r$ close enough to 1.

$(b)$ Suppose that there exists $h\in\OO(\DD,D)$ such that $h(\la_j)=f(\la_j)$, $j=1,\ldots,m$, and $h(\DD)\su\su D$. We proceed similarly as above with the equation $$h(\la_j^{(k)})+h(\la_j)-h(\la_j^{(k)})+f(\la_j^{(k)})-f(\la_j)=f(\la_j^{(k)})$$ and get a contradiction with weak $m$-extremality of $f$ for $\la_1^{(k)},\ldots,\la_m^{(k)}$ if $k>>1$.

$(c)$ If there were exist $h\in\OO(\DD,D)$ with $h(\la_j)=f(\la_j)$, $j=1,\ldots,m$, and $h(\DD)\su\su D$, we would have $$h(\la_j)+f_k(\la_j)-f(\la_j)=f_k(\la_j),$$ whence for big $k$ the map $f_k$ would be not a weak $m$-extremal for $\la_1,\ldots,\la_m$.

$(d)$ It follows from $(c)$.
\end{proof}

From the definition of a weak $m$-extremal follows

\begin{lem}\label{cart}
Let $D_j\su\CC^{k_j}$ be domains and let $f_j\in\OO(\DD,D_j)$, $j=1,\ldots,n$. Then the mapping $(f_1,\ldots,f_n):\DD\lon D_1\times\ldots\times D_n$ is a weak $m$-extremal for $\la_1,\ldots,\la_m$ if and only if at least one of the maps $f_j$ is a weak $m$-extremal for $\la_1,\ldots,\la_m$.
\end{lem}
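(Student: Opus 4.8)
The plan is to unwind both implications directly from the definition of a weak $m$-extremal, exploiting the elementary fact that a holomorphic map into a Cartesian product is the same thing as a tuple of holomorphic maps into the factors, and that the interpolation conditions $h(\la_j)=f(\la_j)$ split coordinatewise. (If one prefers, the equivalent reformulation in Lemma \ref{28}$(a)$, in terms of maps $g\in\OO(\DD,D)$ with $g(\DD)\su\su D$, works identically, since $g(\DD)\su\su D_1\times\ldots\times D_n$ is equivalent to $g_i(\DD)\su\su D_i$ for every $i$.)

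Write $D:=D_1\times\ldots\times D_n$ and $f:=(f_1,\ldots,f_n)$. The central observation is that a map $h\in\OO(\CDD,D)$ is precisely a tuple $(h_1,\ldots,h_n)$ with $h_i\in\OO(\CDD,D_i)$, and that $h(\la_j)=f(\la_j)$ for all $j$ holds if and only if $h_i(\la_j)=f_i(\la_j)$ for all $i$ and all $j$. Thus the existence of a competitor $h$ for $f$ is equivalent to the simultaneous existence, for every $i$, of a competitor $h_i$ for $f_i$.

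For the implication that $f$ being a weak $m$-extremal forces at least one $f_i$ to be one, I would argue by contraposition. If no $f_i$ is a weak $m$-extremal for $\la_1,\ldots,\la_m$, then for each $i$ there is $h_i\in\OO(\CDD,D_i)$ with $h_i(\la_j)=f_i(\la_j)$, $j=1,\ldots,m$. Setting $h:=(h_1,\ldots,h_n)$ produces a map in $\OO(\CDD,D)$ interpolating $f$ at the $\la_j$, so $f$ is not a weak $m$-extremal. For the converse, suppose some $f_{i_0}$ is a weak $m$-extremal and, towards a contradiction, that $f$ is not. Then there is $h=(h_1,\ldots,h_n)\in\OO(\CDD,D)$ with $h(\la_j)=f(\la_j)$; its $i_0$-th component $h_{i_0}\in\OO(\CDD,D_{i_0})$ interpolates $f_{i_0}$ at the $\la_j$, contradicting the weak $m$-extremality of $f_{i_0}$.

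The only point requiring a word of care — and it is the nearest thing to an obstacle in an otherwise purely formal argument — arises in the first implication: each $h_i$ is a priori holomorphic merely on its own neighborhood $U_i$ of $\CDD$, so to assemble the tuple $h$ one passes to the common neighborhood $\bigcap_{i=1}^n U_i$, which is again an open neighborhood of $\CDD$ because the intersection is finite. Everything else is a direct translation between the product and its factors.
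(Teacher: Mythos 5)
Your proof is correct and is exactly the argument the paper has in mind: the paper states Lemma \ref{cart} without proof, remarking only that it ``follows from the definition of a weak $m$-extremal,'' and your coordinatewise splitting (with the contrapositive in one direction and the finite intersection of neighborhoods of $\CDD$ handled explicitly) is the intended elementary verification.
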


In particular, we have the following description for the polydisc $\DD^n$.

\begin{rem}\label{pol}
Let $f:\DD\lon\DD^n$ be a holomorphic mapping. Then the following conditions are equivalent
\begin{enumerate}[$(a)$]
\item $f$ is a weak $m$-extremal,
\item $f$ is an $m$-extremal,
\item $f$ is an $m$-geodesic,
\item $f_j$ is a non-constant Blaschke product of degree $\leq m-1$ for some $j\in\{1,\ldots,n\}$.
\end{enumerate}
\end{rem}

\bigskip

As already mentioned, a holomorphic function $f:\DD\lon\DD$ is a (weak) $m$-extremal if and only if it is a non-constant Blaschke product of degree $\leq m-1$. Thus weak $m$-extremality coincides with $m$-extremality and $m$-geodesity and is entirely described in all simply connected proper domains in $\CC$.

Polynomial interpolation shows immediately that $\CC^n$, $(\CC_*)^k$ and  $\CC^n\times(\CC_*)^k$ do not have weak $m$-extremals.

We present a description of weak $m$-extremals of remaining planar domains, that is domains $D\su\CC$ such that $\#(\CC\sm D)\ge 2$ and $D$ is not biholomorphic to $\DD$. These are all non-simply connected taut domains on the plane. We start with the following

\begin{lem}\label{42}
Let\, $\Pi:\wi D\lon D$ be a holomorphic covering between domains $\wi D,D\su\CC^n$. Assume that $\wi f:\DD\lon\wi D$ is an $m$-extremal. Then $f:=\Pi\circ\wi f:\DD\lon D$ is a~weak $m$-extremal. 
\end{lem}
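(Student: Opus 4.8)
The strategy is to assume that $f$ fails to be a weak $m$-extremal and to lift the resulting competitor through $\Pi$, so as to contradict the $m$-extremality of $\wi f$. Fix distinct $\la_1,\ldots,\la_m\in\DD$; since $\wi f$ is an $m$-extremal, it is in particular a weak $m$-extremal for $\la_1,\ldots,\la_m$, i.e. there is no $\wi h\in\OO(\CDD,\wi D)$ with $\wi h(\la_j)=\wi f(\la_j)$, $j=1,\ldots,m$. I aim to show that $f=\Pi\circ\wi f$ is a weak $m$-extremal for these nodes (after possibly adapting their choice; see below). Suppose it is not. Then there is $h\in\OO(\CDD,D)$ with $h(\la_j)=f(\la_j)=\Pi(\wi f(\la_j))$ for $j=1,\ldots,m$.

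The first step is to produce a lift of $h$. By definition $h$ is holomorphic on some disc $r\DD\supset\CDD$ with $r>1$, which is simply connected, so by the lifting (monodromy) theorem for the covering $\Pi$ there is a holomorphic $\wi h:r\DD\lon\wi D$ with $\Pi\circ\wi h=h$; the lift becomes uniquely determined once I normalise $\wi h(\la_1):=\wi f(\la_1)$, which is legitimate because $\wi f(\la_1)$ lies in the fibre $\Pi^{-1}(h(\la_1))$. Crucially, $\wi h(\CDD)$ is the continuous image of a compact set, hence a compact subset of $\wi D$, so automatically $\wi h\in\OO(\CDD,\wi D)$; thus the relative-compactness requirement built into $\OO(\CDD,\cdot)$ (cf. Lemma \ref{28}$(a)$) comes for free and needs no separate argument. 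By construction $\wi h$ and $\wi f$ agree at $\la_1$ and, for every $j$, the points $\wi h(\la_j)$ and $\wi f(\la_j)$ lie in the common fibre $\Pi^{-1}(f(\la_j))$.

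The main obstacle is to force $\wi h(\la_j)=\wi f(\la_j)$ for \emph{all} $j$, not merely for $j=1$: the lift is pinned at a single node, and a priori the two fibre representatives may differ, the discrepancy being governed by the monodromy of the loop obtained by running along $f$ and back along $h$ between $\la_1$ and $\la_j$. To kill this discrepancy I would exploit that $\wi f$ is an $m$-extremal \emph{for every} tuple, and select the nodes so that the representatives $\wi f(\la_j)$ are precisely the fibre points singled out by extremality --- the several-node analogue of choosing, in the Lempert case $m=2$, the fibre representative of $f(\la_2)$ nearest to $\wi f(\la_1)$, which is forced by the geodesic property of $\wi f$ together with the covering formula for the Lempert function. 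Granting the matching $\wi h(\la_j)=\wi f(\la_j)$, the map $\wi h\in\OO(\CDD,\wi D)$ interpolates $\wi f$ at $\la_1,\ldots,\la_m$, contradicting the weak $m$-extremality of $\wi f$ for these nodes; hence $f$ is a weak $m$-extremal. I expect this node-selection and monodromy step to be the genuine difficulty, the lifting itself being routine.
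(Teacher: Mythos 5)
There is a genuine gap, and it sits exactly where you flag it yourself: the matching $\wi h(\la_j)=\wi f(\la_j)$ for $j\geq 2$ is asserted only modulo a ``node-selection and monodromy step'' that is never carried out, and for a fixed tuple of well-separated nodes it cannot be carried out. Once you pin the lift by $\wi h(\la_1)=\wi f(\la_1)$, the lift is unique, and the sheet on which $\wi h(\la_j)$ lands is dictated by the competitor $h$ itself; you have no freedom left to ``select the fibre representatives singled out by extremality''. The $m=2$ analogy you invoke does not transfer: the nearest-point selection in the Lempert case rests on the covering formula for the Lempert function, and there is no analogous variational description of the fibre points for an $m$-point interpolation problem with $m\geq 3$. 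So, as written, your argument establishes only the routine lifting step (your observation that $\wi h(\CDD)$ is automatically compact in $\wi D$ is correct, but that was never the difficulty) and leaves the actual content of the lemma unproved.

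The paper closes this gap by a mechanism that sidesteps monodromy entirely: it lets the interpolation nodes \emph{cluster at one point}. If $f$ is not a weak $m$-extremal, then for every $k\geq m$ there are $r_k>1$ and $h_k\in\OO(r_k\DD,D)$ with $h_k(j/k)=f(j/k)$, $j=0,\ldots,m-1$. Each $h_k$ is lifted with the normalization $\wi h_k(0)=\wi f(0)$, and by the Montel theorem a subsequence $\wi h_{l_k}$ converges locally uniformly on $\DD$. Since the nodes $j/l_k$ tend to $0$ and all lifts agree with $\wi f$ at $0$, for large $k$ all the points $\wi h_{l_k}(j/l_k)$ and $\wi f(j/l_k)$ fall into a single neighborhood of $\wi f(0)$ on which $\Pi$ is biholomorphic; as they have equal images under $\Pi$, they coincide, so $\wi h_{l_k}$ interpolates $\wi f$ at the $m$ distinct points $j/l_k$, contradicting the $m$-extremality of $\wi f$. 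Note that this uses the hypothesis ``extremal for \emph{all} tuples'' exactly as you suspected one must --- but through a degenerating family of tuples combined with a normal-families/equicontinuity argument that confines everything to one evenly covered neighborhood; that limiting device, not a clever choice of fibre representatives at fixed nodes, is the idea missing from your sketch.
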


\begin{proof}
Suppose that $f$ is not a weak $m$-extremal. Then for any $k\geq m$ there exist $r_k>1$ and a function $h_k\in\OO(r_k\DD,D)$ with $h_k(j/k)=f(j/k)$, $j=0,\ldots,m-1$. Since $\wi f(0)\in\Pi^{-1}(\{h_k(0)\})$, we may lift $h_k$ by $\Pi$ to $\wi h_k\in\OO(r_k\DD,\wi D)$ with the condition $\wi h_k(0)=\wi f(0)$. By the Montel theorem, some subsequence $\wi h_{l_k}$ is locally uniformly convergent on $\DD$. Then for big $k$ all the points $\wi h_{l_k}(j/l_k)$, $\wi f(j/l_k)$ ($j=0,\ldots,m-1$) drop into a neighborhood of $\wi f(0)$, on which $\Pi$ is biholomorphic. From $$\Pi(\wi h_{l_k}(j/l_k))=h_{l_k}(j/l_k)=f(j/l_k)=\Pi(\wi f(j/l_k))$$ we infer that $\wi h_{l_k}(j/l_k)=\wi f(j/l_k)$, $j=0,\ldots,m-1$, which contradicts $m$-extremality of $\wi f$.
\end{proof} 

\begin{prop}\label{27}
Let $D\su\CC$ be a non-simply connected taut domain and let\, $\Pi:\DD\lon D$ be a~holomorphic covering. Then a holomorphic function $f:\DD\lon D$ is a weak $m$-extremal if and only if $f=\Pi\circ B$, where $B$ is a non-constant Blaschke product of degree $\leq m-1$. Moreover, $f$ is not an $m$-extremal.
\end{prop}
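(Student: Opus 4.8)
The statement has three parts: (i) every $f = \Pi\circ B$ with $B$ a non-constant Blaschke product of degree $\le m-1$ is a weak $m$-extremal; (ii) conversely every weak $m$-extremal $f$ arises this way; and (iii) no such $f$ is an $m$-extremal. The guiding principle throughout is that the universal covering $\Pi:\DD\lon D$ lets us transfer the complete description of (weak) $m$-extremals of $\DD$ (namely: non-constant Blaschke products of degree $\le m-1$) to $D$, while the multivaluedness of $\Pi^{-1}$ is precisely what destroys genuine $m$-extremality and yields (iii).

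For direction (i), I would invoke Lemma \ref{42} directly. Since $B$ is a non-constant Blaschke product of degree $\le m-1$, it is an $m$-extremal of $\DD$ (by the Pick-type description of $m$-extremals in $\DD$ recalled just before Lemma \ref{42}). Hence $\wi f := B$ plays the role of the $m$-extremal in $\wi D = \DD$, and Lemma \ref{42} immediately gives that $f = \Pi\circ B$ is a weak $m$-extremal. This is the easy half and costs essentially nothing once Lemma \ref{42} is in hand.

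For direction (ii), suppose $f:\DD\lon D$ is a weak $m$-extremal, say for points $\la_1,\ldots,\la_m$. I would lift $f$ through the covering: since $\DD$ is simply connected, $f$ lifts to a holomorphic $B:\DD\lon\DD$ with $\Pi\circ B = f$. The task is to show $B$ is a non-constant Blaschke product of degree $\le m-1$, i.e. that $B$ is itself a weak $m$-extremal of $\DD$ (for the same points). The key point is that if $B$ were \emph{not} a weak $m$-extremal, there would be $g\in\OO(\CDD,\DD)$ with $g(\la_j)=B(\la_j)$; then $\Pi\circ g\in\OO(\CDD,D)$ would interpolate $f$ at the $\la_j$, contradicting weak $m$-extremality of $f$. (Here I use the characterization from the excerpt that a planar domain $D$ as in the statement has its weak $m$-extremals described via $\DD$, and that $\Pi$ maps $\OO(\CDD,\DD)$ into $\OO(\CDD,D)$.) Thus $B$ is a weak $m$-extremal of $\DD$, hence a non-constant Blaschke product of degree $\le m-1$, which gives the desired form $f=\Pi\circ B$.

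For direction (iii), non-$m$-extremality, I expect this to be the main obstacle and the part requiring the most care. The idea is to exploit that $D$ is \emph{not} simply connected, so $\Pi$ is a nontrivial covering with a nontrivial deck transformation group $\Gamma\su\aut(\DD)$. Given the representation $f=\Pi\circ B$ from (ii), I would show that by choosing a \emph{different} tuple of interpolation nodes one can interpolate $f$ by a bounded-image map, thereby violating the `for all $\la_1,\ldots,\la_m$' requirement of $m$-extremality. Concretely: pick a nontrivial deck transformation $\gamma\in\Gamma$ and consider the competitor $\Pi\circ(\gamma\circ B)$, or more flexibly combine two branches of $\Pi^{-1}$ so that the $m$ prescribed values $f(\la_j)$ can be hit by a map whose lift is a Blaschke product of \emph{lower} degree than forced; because the $m$ values $f(\la_j)$ have multiple preimages under $\Pi$, there is genuine freedom in the interpolation data, and one can arrange the nodes so that a Blaschke product of degree $\le m-2$ (composed with $\Pi$) already interpolates. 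Since tautness of $D$ and the relatively compact image of such a competitor let me pass to $\OO(\CDD,D)$ via Lemma \ref{28}$(a)$, this exhibits the failure of $m$-extremality. The delicate bookkeeping lies in selecting the specific nodes and the specific branches so that the lower-degree interpolation genuinely succeeds; this is where I anticipate spending most of the effort, whereas parts (i) and (ii) are essentially formal once the lifting/descending correspondence through $\Pi$ is set up.
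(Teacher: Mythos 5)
Your parts (i) and (ii) are correct and coincide with the paper's own argument: the ``if'' direction is exactly Lemma \ref{42} applied to the $m$-extremal $B$ of $\DD$, and the ``only if'' direction is the lift $f=\Pi\circ B$ together with pushing a hypothetical interpolant $g\in\OO(\CDD,\DD)$ of $B$ down to $\Pi\circ g\in\OO(\CDD,D)$, which is precisely how one justifies the paper's assertion that $B$ inherits weak $m$-extremality.

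The genuine gap is in (iii), which you leave as a plan, and the plan as stated would fail. First, the competitor $\Pi\circ(\gamma\circ B)$ with $\gamma$ a deck transformation is identically equal to $f$, since $\Pi\circ\gamma=\Pi$; it is not a new map at all. Second, the ``lower-degree Blaschke product'' route breaks down on two counts: a non-constant Blaschke product $B'$ of degree $\le m-2$ has in particular degree $\le m-1$, so it is still an $m$-extremal of $\DD$ and $\Pi\circ B'$ is again a weak $m$-extremal by your part (i); and in any case $\Pi\circ B'$ has image $\Pi(B'(\DD))=D$, which is not relatively compact in $D$, so even if such a map interpolated $f$ at $m$ nodes this would not contradict $m$-extremality of $f$ --- the definition requires an interpolant in $\OO(\CDD,D)$, equivalently one with relatively compact image (Lemma \ref{28}$(a)$). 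The missing idea, which is the heart of the paper's proof, is the \emph{constant} competitor combined with the infiniteness of the fibers: since $D$ is a non-simply connected planar domain, the covering $\Pi$ has infinite multiplicity, so every fiber $\Pi^{-1}(\{a\})$ is infinite. The paper packages this as a reduction: if $f$ were an $m$-extremal, then $\Pi$ itself would be one, because an interpolant $h$ of $\Pi$ at distinct nodes $\la_1,\ldots,\la_m$ with $h(\DD)\su\su D$ yields, upon choosing $\mu_j$ with $B(\mu_j)=\la_j$ (possible since the finite Blaschke product $B$ is a proper, hence surjective, self-map of $\DD$, and distinct $\la_j$ give distinct $\mu_j$), the interpolant $h\circ B$ of $f$ at $\mu_1,\ldots,\mu_m$ with relatively compact image; but $\Pi$ is not an $m$-extremal, since for $m$ distinct points in an infinite fiber $\Pi^{-1}(\{a\})$ the constant map $a$ interpolates $\Pi$ there. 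Equivalently and directly: pick $m$ distinct $\mu_j$ with $B(\mu_j)\in\Pi^{-1}(\{a\})$, so that $f(\mu_j)=a$ for all $j$ and the constant $a$ interpolates $f$. Your observation that the values of $f$ have multiple $\Pi$-preimages points in this direction, but without the constant-map trick and the infinite fibers, the ``delicate bookkeeping'' you defer does not close.
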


\begin{proof}
Any holomorphic function $f:\DD\lon D$ can be lifted by $\Pi$, i.e. there exists $B\in\OO(\DD,\DD)$ with $f=\Pi\circ B$. Assume that $f$ is a weak $m$-extremal for $\la_1,\ldots,\la_m$. Then $B$ is a weak $m$-extremal for $\la_1,\ldots,\la_m$, that is a non-constant Blaschke product of degree $\leq m-1$.

Reversely, assume that $f=\Pi\circ B$, where $B$ is a non-constant Blaschke product of degree $\leq m-1$. By Lemma \ref{42}, the function $f$ is a weak $m$-extremal.

Suppose that $f$ is an $m$-extremal. We claim that $\Pi$ is an $m$-extremal. Indeed, suppose contrary. It follows that there exist distinct points $\la_1,\ldots,\la_m\in\DD$ and a~holomorphic function $h:\DD\lon D$ with $h(\la_j)=\Pi(\la_j)$, $j=1,\ldots,m$, and $h(\DD)\su\su D$. Let $\mu_j\in\DD$ be such that $\la_j=B(\mu_j)$. Then $h\circ B$ gives a contradiction with weak $m$-extremality of $f$ for $\mu_1,\ldots,\mu_m$. Since $\Pi$ is of infinite (countable) multiplicity, for any $a\in D$ the set $\Pi^{-1}(\{a\})\su\DD$ is infinite. Therefore, the constant function $a$~interpolates $\Pi$ for any different numbers $\la_1,\ldots,\la_m\in\Pi^{-1}(\{a\})$, contradiction.
\end{proof} 

\section{Quasi-balanced pseudoconvex domains}
Given $k=(k_1,\ldots,k_n)\in(\NN_0^n)_*$. A domain $D\su\CC^n$ such that $$\la\in\CDD,\ z\in D\Longrightarrow(\la^{k_1}z_1,\ldots,\la^{k_n}z_n)\in D,$$ is called $k$-\emph{balanced} or generally \emph{quasi-balanced}. A $(1,\ldots,1)$-balanced domain is \emph{balanced}.

\begin{lem}\label{fi}
Let $D\su\CC^n$ be a $k$-balanced pseudoconvex domain and let $f\in\OO(\DD,D)$ $($resp. $f\in\OO(\CDD,D)${$)$}. Assume that $$f=(m_\al^{k_1}\phi_1,\ldots,m_\al^{k_n}\phi_n),$$ where $\phi_j\in\OO(\DD)$ $($resp. $\phi_j\in\OO(\CDD)${$)$}, $\al\in\DD$ and $\phi:=(\phi_1,\ldots,\phi_n)$. Then either $\phi(\DD)\su D$ or $\phi(\DD)\su\pa D$ $($resp. $\phi\in\OO(\CDD,D)${$)$}.
\end{lem}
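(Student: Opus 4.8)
The plan is to reduce the whole statement to a one–variable maximum principle applied to the logarithm of the Minkowski gauge of $D$. Since $D$ is $k$-balanced and pseudoconvex, it carries a Minkowski function $\mathfrak h\colon\CC^n\lon[0,\infty)$, $\mathfrak h(z):=\inf\{t>0:(z_1/t^{k_1},\ldots,z_n/t^{k_n})\in D\}$, which is $k$-homogeneous in the sense that $\mathfrak h(\la^{k_1}z_1,\ldots,\la^{k_n}z_n)=|\la|\,\mathfrak h(z)$ for $\la\in\CC$, and — this is precisely where pseudoconvexity enters — satisfies $\log\mathfrak h\in\psh(\CC^n)$. As a preliminary step I would record, directly from the defining scaling property, the three facts $\{\mathfrak h<1\}=D$, $D\su\{\mathfrak h\le 1\}$ and $\{\mathfrak h=1\}\su\pa D$: the inclusion $\{\mathfrak h<1\}\su D$ and $D\su\{\mathfrak h\le 1\}$ are immediate, and a point with $\mathfrak h=1$ lies in $\ov D$ (scale it by $\la\to 1$, $|\la|<1$) but not in the open set $D$ (scale by $|\la|>1$ to violate $\mathfrak h\le 1$), hence in $\pa D$.

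Now use the special form $f=(m_\al^{k_1}\phi_1,\ldots,m_\al^{k_n}\phi_n)$: homogeneity gives $\mathfrak h(f(\la))=|m_\al(\la)|\,\mathfrak h(\phi(\la))$ for every $\la\in\DD$. Put $v:=\log(\mathfrak h\circ\phi)$; since $\phi$ is holomorphic and $\log\mathfrak h$ is plurisubharmonic, $v$ is subharmonic on $\DD$ (possibly $\equiv-\infty$, which happens only if $\phi\equiv 0$ and then $\phi(\DD)=\{0\}\su D$ trivially). Because $f(\la)\in D=\{\mathfrak h<1\}$, I obtain $v(\la)\le-\log|m_\al(\la)|$ for $\la\neq\al$. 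The point of the prescribed shape of $f$ is that $m_\al$ is an automorphism of $\DD$, so $|m_\al(\la)|\to 1$, i.e. $-\log|m_\al(\la)|\to 0$, as $\la$ tends to $\TT$. Thus $\limsup_{\la\to\TT}v(\la)\le 0$, and the maximum principle for subharmonic functions forces $v\le 0$ on $\DD$, that is $\mathfrak h\circ\phi\le 1$.

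The asserted dichotomy is then immediate from the strong maximum principle applied to the subharmonic $v$ on the connected set $\DD$ with $\sup v\le 0$. If $v<0$ throughout, then $\phi(\DD)\su\{\mathfrak h<1\}=D$. If instead $v(\la_0)=0$ for some $\la_0\in\DD$, then $v$ attains its supremum at an interior point, so $v\equiv 0$, whence $\mathfrak h\circ\phi\equiv 1$ and $\phi(\DD)\su\{\mathfrak h=1\}\su\pa D$. This is exactly the alternative $\phi(\DD)\su D$ or $\phi(\DD)\su\pa D$. For the parenthetical $\OO(\CDD,D)$ version I would run the same computation on a neighbourhood of $\CDD$: for $\la\in\TT$ one has $|m_\al(\la)|=1$, so $f(\la)\in D$ gives $\mathfrak h(\phi(\la))=\mathfrak h(f(\la))<1$; hence $v<0$ on the compact set $\TT$, and upper semicontinuity yields $\max_\TT v<0$. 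The weak maximum principle then gives $v\le\max_\TT v<0$ on $\CDD$, i.e. $\phi(\CDD)\su\{\mathfrak h<1\}=D$, which is the claim $\phi\in\OO(\CDD,D)$.

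The only genuine input is the plurisubharmonicity of $\log\mathfrak h$, a standard consequence of pseudoconvexity of a (quasi-)balanced domain; everything else is bookkeeping with the gauge and two invocations of the maximum principle. I expect the step needing the most care to be the preliminary one — establishing the homogeneity of $\mathfrak h$ and the identifications $D=\{\mathfrak h<1\}$, $\{\mathfrak h=1\}\su\pa D$ for a possibly unbounded $D$ — since it is these identities, rather than the subharmonic estimate itself, that convert the two outcomes of the maximum principle into the clean statement $\phi(\DD)\su D$ versus $\phi(\DD)\su\pa D$.
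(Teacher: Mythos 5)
Your maximum-principle argument coincides with the paper's proof in its first case $k_1,\ldots,k_n\ge 1$: same gauge $h$, same homogeneity identity $h(f(\la))=|m_\al(\la)|\,h(\phi(\la))$, same boundary estimate $\limsup_{\la\to\TT}h(\phi(\la))\le 1$ (resp. $h\circ\phi=h\circ f<1$ on $\TT$), same subharmonic dichotomy. The genuine gap is in your only substantive input: the claim $\log h\in\psh(\CC^n)$ is false as stated when some weight vanishes, and the lemma allows $k\in(\NN_0^n)_*$, e.g. $k=(0,1)$. If $k_1=\ldots=k_s=0$ and $k_{s+1},\ldots,k_n\ge 1$, the scaling fixes the first $s$ coordinates, so $h(z)=+\infty$ whenever $z':=(z_1,\ldots,z_s)$ lies outside the projection $G$ of $D$ on $\CC^s$; in particular $\log h$ cannot be plurisubharmonic on $\CC^n$ (psh functions are $[-\infty,\infty)$-valued), and the ``standard consequence'' you invoke (\cite{n}, cf. \cite[Proposition 2.2.15]{jp}) is proved only for $k_1,\ldots,k_n\ge 1$. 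The paper devotes half of its proof to exactly this case: it substitutes $\Phi(z):=(z',z_{s+1}^{k_{s+1}},\ldots,z_n^{k_n})$, recognizes $\wi h:=h\circ\Phi$ as the gauge of a pseudoconvex Hartogs domain over $G$ with balanced fibers, gets $\log\wi h\in\psh(G\times\CC^{n-s})$ from \cite[Proposition 4.1.14]{JJ}, recovers $\log h$ on $G\times(\CC_*)^{n-s}$ via local choices of roots, and extends across the coordinate hyperplanes by the removable singularities theorem. Once this corrected statement ($\log h\in\psh(G\times\CC^{n-s})$) is available, your argument does go through verbatim: since $m_\al^{0}\equiv 1$, the first $s$ coordinates of $\phi$ coincide with those of $f$, so $\phi(\DD)\su G\times\CC^{n-s}$ automatically. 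Note also that you flagged the wrong preliminary as delicate: the homogeneity of $h$ and the identifications $D=\{h<1\}$, $\{h=1\}\su\pa D$ are routine even for unbounded $D$, exactly as you sketch; it is the plurisubharmonicity in the presence of zero weights that needs the extra idea.

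A further small inaccuracy, harmless but worth correcting: $v\equiv-\infty$ means $h\circ\phi\equiv 0$, which for unbounded $D$ does not force $\phi\equiv 0$, since $\{h=0\}$ may contain nonzero vectors (it consists of the directions along which the scaled orbit stays in $D$ for all large scalings). The conclusion survives, because $\{h=0\}\su\{h<1\}=D$ by the scaling property, so $\phi(\DD)\su D$ holds in this degenerate case as well.
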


\begin{proof}
Consider two cases.

$(a)$ $k_1,\ldots,k_n\ge 1$. Let $$\label{min}h(z):=\inf\left\{t>0:\left(\frac{z_1}{t^{k_1}},\ldots,\frac{z_n}{t^{k_n}}\right)\in D\right\},\quad z\in\CC^n,$$ stand for the \emph{$k$-Minkowski function} of the domain $D$. If $k=(1,\ldots,1)$, we have the classical \emph{Minkowski function}. Then
\begin{itemize}
\item $D=\{z\in\CC^n:h(z)<1\}$,
\item $h(\la^{k_1}z_1,\ldots,\la^{k_n}z_n)=|\la|h(z)$, $z\in\CC^n$, $\la\in\CC$,
\item $D$ is pseudoconvex if and only if $\log h\in\psh(\CC^n)$ \cite{n} (cf. \cite[Proposition 2.2.15]{jp}).
\end{itemize}

We have $$\limsup_{\la\to\TT}h(\phi(\la))=\limsup_{\la\to\TT}h(f(\la))\le 1$$  $$(\text{resp. }h\circ\phi=h\circ f<1\text{ on }\TT),$$ whence either $\phi(\DD)\su D$ or $\phi(\DD)\su\pa D$ (resp. $\phi\in\OO(\CDD,D)$).

$(b)$ In the opposite case assume that $k_1=\ldots=k_s=0$, $k_{s+1},\ldots,k_n\ge 1$, where $1\le s\le n-1$. Denote $z':=(z_1,\ldots,z_s)$, $z'':=(z_{s+1},\ldots,z_n)$ for $z\in\CC^n$. Let $G$ be the projection of $D$ on $\CC^s$. Define $h$ by the same formula as before, but for $z\in G\times\CC^{n-s}$. Further we proceed analogously as in \cite{n} (cf. \cite[Proposition 2.2.15]{jp}). We define the map $\Phi:G\times\CC^{n-s}\lon\CC^n$ as $\Phi(z):=(z',z_{s+1}^{k_{s+1}},\ldots,z_n^{k_n})$ and put $\wi D:=\Phi^{-1}(D)$, $\wi h:=h\circ\Phi$. Then $\wi h(z',\la z'')=|\la|\wi h(z',z'')$, which means that $$\wi D=\{(z',z'')\in G\times\CC^{n-s}:\wi h(z',z'')<1\}$$ is a pseudoconvex Hartogs domain over $G$ with balanced fibers. For any point $z'\in G$, the function $\wi h(z',\cdot)$ is the Minkowski function of the fiber $$\wi D_{z'}:=\{z''\in\CC^{n-s}:(z',z'')\in\wi D\},$$ hence $G$ is pseudoconvex and $\log\wi h\in\psh(G\times\CC^{n-s})$ \cite[Proposition 4.1.14]{JJ}. As $h(z)=\wi h(z',\sqrt[k_{s+1}]{z_{s+1}},\ldots,\sqrt[k_n]{z_n})$ (with an arbitrary choice of the roots), we have $\log h\in\psh(G\times(\CC_*)^{n-s})$. From the removable singularities theorem it follows that $\log h\in\psh(G\times\CC^{n-s})$.

We finish the proof as in $(a)$.
\end{proof}

The following lemma will be crucial in the study of (weak) $m$-extremals e.g. in complex ellipsoids.

\begin{lem}\label{10}
Let $D\su\CC^n$ be a $k$-balanced pseudoconvex domain and let $f:\DD\lon D$ be a weak $m$-extremal for $\la_1,\ldots,\la_m$.
\begin{enumerate}[$(a)$]
\item Assume that $f=(m_\al^{k_1}\phi_1,\ldots,m_\al^{k_n}\phi_n)$, $\phi_j\in\OO(\DD)$, $\al\in\DD$, $\phi:=(\phi_1,\ldots,\phi_n)$. Then either $\phi(\DD)\su D$ or $\phi(\DD)\su\pa D$, and in the first case
\begin{enumerate}[$(i)$]
\item $\phi$ is a weak $m$-extremal for $\la_1,\ldots,\la_m$.
\item if $m\geq 3$, $\la_m=\al$ and $k_1,\ldots,k_n\ge 1$, then $\phi$ is a weak $(m-1)$-extremal for $\la_1,\ldots,\la_{m-1}$.
\end{enumerate}
\item Suppose that $\DD\ni\la_{m+1}\ne\la_1,\ldots,\la_m$ and $k_1,\ldots,k_n\leq 1$, $l\in\NN$. Then the map $\psi_{(l)}:=(m_{\la_{m+1}}^{lk_1}f_1,\ldots,m_{\la_{m+1}}^{lk_n}f_n):\DD\lon D$ is a weak $(m+1)$-extremal for $\la_1,\ldots,\la_{m+1}$.
\end{enumerate}
\end{lem}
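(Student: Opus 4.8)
The plan is to exploit, throughout, the single mechanism provided by the $k$-balanced structure: since $m_\beta(\CDD)\su\CDD$ for any $\beta\in\DD$, one may multiply a map by the factors $m_\beta^{k_i}$ coordinatewise and stay in $D$ (and, where zeros permit, divide by them), the passage between a map and its factored form being governed by Lemma \ref{fi}. Write $I:=\{i:k_i\ge 1\}$ for the indices carrying a genuine factor. The dichotomy in $(a)$ is immediate: applying Lemma \ref{fi} to $f=(m_\al^{k_1}\phi_1,\ldots,m_\al^{k_n}\phi_n)\in\OO(\DD,D)$ yields $\phi(\DD)\su D$ or $\phi(\DD)\su\pa D$, and from now on we are in the case $\phi(\DD)\su D$. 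For $(i)$ I argue by contraposition. If $\phi$ is not a weak $m$-extremal for $\la_1,\ldots,\la_m$, take $h\in\OO(\CDD,D)$ with $h(\la_j)=\phi(\la_j)$ and set $H:=(m_\al^{k_1}h_1,\ldots,m_\al^{k_n}h_n)$. Then $H\in\OO(\CDD,D)$ by $k$-balancedness, and $H(\la_j)=(m_\al(\la_j)^{k_i}\phi_i(\la_j))_i=f(\la_j)$ for every $j$, contradicting the weak $m$-extremality of $f$.

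For $(ii)$ the key observation is that the extra node $\la_m=\al$ is discharged for free. Here $k_i\ge 1$ forces $f(\al)=0$. Given a competitor $h\in\OO(\CDD,D)$ for $\phi$ at the $m-1$ nodes $\la_1,\ldots,\la_{m-1}$, form the same $H:=(m_\al^{k_i}h_i)_i\in\OO(\CDD,D)$. It matches $f$ at $\la_1,\ldots,\la_{m-1}$ exactly as above, while at $\la_m=\al$ the vanishing $m_\al(\al)^{k_i}=0$ gives $H(\al)=0=f(\al)$ automatically, regardless of the value $h(\al)$. Thus $H$ interpolates $f$ at all $m$ nodes, contradicting weak $m$-extremality; the hypothesis $m\ge 3$ serves only to make $(m-1)$-extremality meaningful.

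Part $(b)$ reverses this operation, and I would run it by induction on $l$, the step being a single division. By Lemma \ref{28}$(a)$ it suffices to exclude a competitor $g\in\OO(\DD,D)$ with $g(\DD)\su\su D$ and $g(\la_j)=\psi_{(l)}(\la_j)$, $j=1,\ldots,m+1$. Since $m_{\la_{m+1}}$ vanishes at $\la_{m+1}$ we have $g_i(\la_{m+1})=0$ for $i\in I$; here the hypothesis $k_i\le 1$ is essential, as a single simple zero is then exactly enough to extract the factor $m_{\la_{m+1}}^{k_i}$. Writing $g_i=m_{\la_{m+1}}\hat g_i$ for $i\in I$ and $\hat g_j:=g_j$ otherwise, we obtain $g=(m_{\la_{m+1}}^{k_1}\hat g_1,\ldots,m_{\la_{m+1}}^{k_n}\hat g_n)$. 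Since $m_{\la_{m+1}}(\la_s)\ne 0$ for $s\le m$, evaluating at the surviving nodes shows that $\hat g$ interpolates $\psi_{(l-1)}$ at $\la_1,\ldots,\la_m$; the base case $l=1$ reads $\psi_{(0)}=f$, and then $\hat g$ contradicts the weak $m$-extremality of $f$.

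The hard part will be showing that $\hat g$ is a legitimate competitor, i.e. $\hat g(\DD)\su\su D$. Lemma \ref{fi} (equivalently the dichotomy of part $(a)$) gives $\hat g(\DD)\su D$ or $\hat g(\DD)\su\pa D$; letting $h$ denote the $k$-Minkowski function from the proof of Lemma \ref{fi}, the homogeneity identity $h(g)=|m_{\la_{m+1}}|\,h(\hat g)$ excludes the boundary alternative, since there $h(g(\la))=|m_{\la_{m+1}}(\la)|\to 1$ as $\la\to\TT$, contradicting $g(\DD)\su\su D$. The same identity controls relative compactness: $h(\hat g)=h(g)/|m_{\la_{m+1}}|$ stays below $1$ because $\sup_\DD h(g)<1$ while $|m_{\la_{m+1}}|$ is bounded below on each circle away from the single interior point $\la_{m+1}$, near which $h(\hat g)$ extends continuously with value $<1$. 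Once $\hat g(\DD)\su\su D$ is secured the reduction closes; the inductive step lowers $l$ by one at the cost of one node, so iterating brings the problem down to $\psi_{(0)}=f$.
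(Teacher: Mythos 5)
Your part $(a)$ is correct and coincides with the paper's own argument: the dichotomy is Lemma \ref{fi}, and in $(i)$, $(ii)$ one multiplies the competitor coordinatewise by $m_\al^{k_i}$, the extra node $\la_m=\al$ in $(ii)$ being matched for free because $k_1,\ldots,k_n\ge 1$ forces $f(\al)=0$. Your base case $l=1$ of $(b)$ is also sound, though heavier than the paper's: the paper keeps the competitor in $\OO(\CDD,D)$, divides by $m_{\la_{m+1}}^{k_i}$ (the simple zero at $\la_{m+1}$ suffices exactly because $k_i\le 1$, as you rightly stress), and then quotes the parenthetical $\OO(\CDD,D)$ case of Lemma \ref{fi} to conclude the quotient maps into $D$. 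Your detour through Lemma \ref{28}$(a)$ and the $k$-Minkowski function $h_D$ essentially re-proves that half of Lemma \ref{fi}; if you keep it, finish the estimate by the maximum principle applied to the subharmonic function $\log(h_D\circ\hat g)$ rather than circle-by-circle bounds, and note that when $D$ is unbounded you also need boundedness of $\hat g$ to get $\hat g(\DD)\su\su D$.

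The genuine gap is your inductive step, and it cannot be repaired. Dividing once spends the only spare node: $\hat g$ interpolates $\psi_{(l-1)}$ at $\la_1,\ldots,\la_m$ only, since for $i\in I$ nothing forces $\hat g_i(\la_{m+1})=0$ --- the condition $g_i(\la_{m+1})=\psi_{(l),i}(\la_{m+1})=0$ gives $g_i$ a zero of order one, not of order $lk_i$. So no second division is available, and interpolation of $\psi_{(l-1)}$ at those $m$ nodes contradicts nothing, because for $l\ge 2$ the map $\psi_{(l-1)}$ is neither assumed nor true to be a weak $m$-extremal for $\la_1,\ldots,\la_m$; hence ``iterating brings the problem down to $\psi_{(0)}=f$'' fails at the second step. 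You are in good company: the paper's own induction (``proceed as above for $\psi_{(l)}$ and $\psi_{(l+1)}$'') has exactly the same defect, the divided map failing to match $\psi_{(l)}$ at $\la_{m+1}$ in the coordinates with $k_i=1$. Worse, for $l\ge 2$ the statement itself is false: take $D=\DD$, $k=1$, $m=2$, $f=\id_\DD$, a weak $2$-extremal for $\la_1=0$ and $\la_2\in\DD_*$; with $s:=\la_3$ the map $\psi_{(2)}(\la)=\la\, m_s(\la)^2$ is interpolated at $0,\la_2,s$ by $h(\la):=m_s(\la_2)\,\la\, m_s(\la)\in\OO(\CDD,\DD)$, since $|h|\le|m_s(\la_2)|<1$ on $\CDD$, $h(0)=h(s)=0$ and $h(\la_2)=\la_2 m_s(\la_2)^2$; so $\psi_{(2)}$ is not a weak $3$-extremal for these points (consistently with Pick's description, as $\psi_{(2)}$ is a Blaschke product of degree $3$). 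Only the case $l=1$ --- which is what your base step proves, and what applications with distinct zeros such as Corollary \ref{nc} actually use --- is salvageable.
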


\begin{proof}
$(a)$ Assume that $\phi(\DD)\su D$.

\begin{enumerate}[$(i)$]
\item Suppose that there exists $h\in\OO(\CDD,D)$ with $h(\la_j)=\phi(\la_j)$, $j=1,\ldots,m$. Then $g:=(m_\al^{k_1}h_1,\ldots,m_\al^{k_n}h_n)\in\OO(\CDD,D)$ satisfies $g(\la_j)=f(\la_j)$, $j=1,\ldots,m$, contradiction.
\item Assume that there is $h\in\OO(\CDD,D)$ with $h(\la_j)=\phi(\la_j)$, $j=1,\ldots,m-1$. Then $g:=(m_\al^{k_1}h_1,\ldots,m_\al^{k_n}h_n)\in\OO(\CDD,D)$ satisfies $g(\la_j)=f(\la_j)$, $j=1,\ldots,m-1$, and $g(\al)=f(\al)=0$, contradiction.
\end{enumerate}

$(b)$ We proceed inductively on $l$. For $l=1$ assume the existence of a mapping $h\in\OO(\CDD,D)$ such that $h(\la_j)=\psi_{(1)}(\la_j)$, $j=1,\ldots,m+1$. The mapping $g:=(h_1/m_{\la_{m+1}}^{k_1},\ldots,h_n/m_{\la_{m+1}}^{k_n})\in\OO(\CDD,D)$ satisfies $g(\la_j)=f(\la_j)$, $j=1,\ldots,m+1$, contradiction.

Step $l\Longrightarrow l+1$: proceed as above for $\psi_{(l)}$ and $\psi_{(l+1)}$ instead of $f$ and $\psi_{(1)}$ respectively.
\end{proof}

Assuming that $f$ is an $m$-extremal, it seems that generally $\psi_{(1)}$ should not be an $(m+1)$-extremal (P\ref{m+1}).

Lemmas \ref{10}$(a)(ii)$ and \ref{28}$(b)$ give

\begin{cor}\label{17}
Let $D\su\CC^n$ be a $k$-balanced pseudoconvex domain and let $f:\DD\lon D$ be an $m$-extremal. Assume that $f=(m_\al^{k_1}\phi_1,\ldots,m_\al^{k_n}\phi_n)$, $\phi_j\in\OO(\DD)$, $\al\in\DD$, $m\geq 3$, $k_1,\ldots,k_n\ge 1$. Then either $\phi:=(\phi_1,\ldots,\phi_n)$ is an $(m-1)$-extremal of $D$ or $\phi(\DD)\su\pa D$.
\end{cor}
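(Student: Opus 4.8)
The plan is to combine Corollary~\ref{17}'s two immediate ingredients exactly as the statement advertises. Since $f$ is an $m$-extremal, it is in particular a weak $m$-extremal for \emph{every} choice of distinct points $\la_1,\ldots,\la_m\in\DD$. The strategy is to fix an arbitrary tuple of distinct points that places one of them at $\al$, apply Lemma~\ref{10}$(a)(ii)$ to obtain a weak $(m-1)$-extremality statement for $\phi$ relative to the remaining points, and then upgrade this from ``weak, for specific points'' to ``$(m-1)$-extremal'' by a limiting/density argument supplied by Lemma~\ref{28}$(b)$.

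First I would dispose of the alternative: by Lemma~\ref{fi} (or directly by the first assertion of Lemma~\ref{10}$(a)$), because $f$ is a weak $m$-extremal and has the displayed factored form with $k_1,\ldots,k_n\ge 1$, we automatically have the dichotomy $\phi(\DD)\su D$ or $\phi(\DD)\su\pa D$. In the second case there is nothing to prove, so I assume $\phi(\DD)\su D$ for the remainder. Now fix arbitrary distinct points $\mu_1,\ldots,\mu_{m-1}\in\DD$; I want to show $\phi$ is a weak $(m-1)$-extremal for this tuple, since that quantifies to $(m-1)$-extremality. The idea is to append $\al$ as an $m$-th node, but $\al$ may coincide with one of the $\mu_j$, so I would instead choose, for each $\mu$-tuple, auxiliary distinct points $\la_1^{(\nu)},\ldots,\la_{m-1}^{(\nu)}\to\mu_1,\ldots,\mu_{m-1}$ avoiding $\al$, set $\la_m^{(\nu)}:=\al$, and apply Lemma~\ref{10}$(a)(ii)$ to each $\nu$ (legitimate since $f$ is a weak $m$-extremal for \emph{all} distinct tuples). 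This yields that $\phi$ is a weak $(m-1)$-extremal for $\la_1^{(\nu)},\ldots,\la_{m-1}^{(\nu)}$ for every $\nu$.

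Finally I would pass to the limit. Applying Lemma~\ref{28}$(b)$ (the continuity of weak extremality in the interpolation nodes, part~$(b)$, which handles exactly $\la_j^{(\nu)}\to\mu_j$) to the mapping $\phi$, weak $(m-1)$-extremality for the tuples $\la_1^{(\nu)},\ldots,\la_{m-1}^{(\nu)}$ descends to weak $(m-1)$-extremality for the limit tuple $\mu_1,\ldots,\mu_{m-1}$. Since $\mu_1,\ldots,\mu_{m-1}$ were arbitrary distinct points, $\phi$ is an $(m-1)$-extremal, completing the proof.

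The main obstacle I anticipate is the bookkeeping around the node $\al$: Lemma~\ref{10}$(a)(ii)$ requires $\la_m=\al$ with $\la_m$ genuinely distinct from $\la_1,\ldots,\la_{m-1}$, whereas the definition of $(m-1)$-extremality demands I verify weak $(m-1)$-extremality for \emph{every} $(m-1)$-tuple $\mu_1,\ldots,\mu_{m-1}$, including those containing $\al$ itself. The perturbation-and-limit device built on Lemma~\ref{28}$(b)$ is precisely what bridges this gap, so the proof is essentially routine once the two cited lemmas are invoked in the right order; the only care needed is to ensure the approximating points stay distinct and clear of $\al$, which is possible since one is merely perturbing finitely many points in the open disc.
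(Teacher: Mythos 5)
Your proposal is correct and follows exactly the route the paper intends: the paper's entire proof is the remark that Lemmas~\ref{10}$(a)(ii)$ and \ref{28}$(b)$ give the corollary, and you have simply spelled out the implicit details (the dichotomy from Lemma~\ref{fi}/\ref{10}$(a)$, appending $\al$ as the $m$-th node, and the perturbation via Lemma~\ref{28}$(b)$ to handle tuples containing $\al$). No gaps; the bookkeeping you flag is precisely what Lemma~\ref{28}$(b)$ is invoked for.
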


The question arises, whether the analogue of Corollary \ref{17} holds for $m$-geodesics.

\begin{rem}\label{ana}
We shall show that it is false even in the convex case for
\begin{enumerate}[$(a)$]
\item $m\ge 4$ and some $k\neq(1,\ldots,1)$ (Corollary \ref{a}).
\item $m\ge 4$ and $k=(1,\ldots,1)$ (Proposition \ref{ab}).
\item $m\ge 5$ and $k=(1,\ldots,1)$ in some complex ellipsoid (Proposition \ref{40}).
\end{enumerate}
\end{rem}

It would be interesting to decide what happens for $m=4$ and $k=(1,\ldots,1)$ in (not necessarily convex) complex ellipsoids (P\ref{37}).

It turns out the answer for $m=3$ is positive.

\begin{prop}\label{39}
Let $D\su\CC^n$ be a $k$-balanced pseudoconvex domain and let $f:\DD\lon D$ be a $3$-geodesic. Assume that $f=(m_\al^{k_1}\phi_1,\ldots,m_\al^{k_n}\phi_n)$, $\phi_j\in\OO(\DD)$, $\al\in\DD$, $k_1,\ldots,k_n\ge 1$. Then either $\phi:=(\phi_1,\ldots,\phi_n)$ is a $2$-geodesic of $D$ or $\phi(\DD)\su\pa D$.

If $f$ is additionally a $2$-geodesic, then $\phi(\DD)\su\pa D$.
\end{prop}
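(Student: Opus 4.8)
The plan is to unwind the definition of a $3$-geodesic and run the factor $m_\al^{k_j}$ through the $3$-left inverse, mimicking the algebraic manipulations already used in Lemma \ref{10}$(a)$ and Corollary \ref{17}, but now at the level of left inverses rather than interpolation data. By definition there is $F\in\OO(D,\DD)$ such that $B:=F\circ f$ is a non-constant Blaschke product with $\deg B\le 2$. First I would apply Lemma \ref{fi} to conclude the dichotomy: either $\phi(\DD)\su\pa D$ (and we are done with the first alternative) or $\phi(\DD)\su D$, so that $\phi:\DD\lon D$ is a genuine holomorphic map into $D$ to which the notion of $2$-geodesic applies. The whole substance is therefore the case $\phi(\DD)\su D$, where I must manufacture a $2$-left inverse for $\phi$, i.e. an $\wi F\in\OO(D,\DD)$ with $\wi F\circ\phi$ a non-constant Blaschke product of degree $\le 1$ (equivalently an automorphism of $\DD$, since ``$\le 1$'' forces degree exactly $1$ by non-constancy).

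The key computation is to understand how the zero at $\al$ interacts with $B=F\circ f$. Since $f(\al)=(m_\al^{k_1}(\al)\phi_1(\al),\ldots)=0\in D$ (each $k_j\ge 1$ kills the value at $\al$), the point $\al$ is forced to be a zero of the relevant Blaschke data, and the geometric factor $m_\al$ common to all coordinates should be extractable. Concretely, I expect that $B$ must vanish at $\al$ after composing with a suitable Möbius normalization of $F$; writing $B=\zeta\, m_{\beta}m_{\gamma}$ (degree $2$) or $B=\zeta\, m_\beta$ (degree $1$), and using that $f=m_\al^{k}\phi$ degenerates at $\al$, I would show one Blaschke factor of $B$ can be identified with $m_\al$ and divided out. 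Dividing $B$ by $m_\al$ leaves a Blaschke product of degree $\le 1$; the goal is to realize this quotient as $\wi F\circ\phi$ for an explicit $\wi F$ built from $F$ and the coordinate rescaling by $m_\al^{k_j}$, exactly paralleling how $g:=(m_\al^{k_1}h_1,\ldots,m_\al^{k_n}h_n)$ was formed in the proof of Lemma \ref{10}$(a)$. The hard part will be producing this $\wi F$ honestly: $F$ need not factor through the coordinatewise division by $m_\al^{k_j}$ in any obvious way, so one cannot simply ``cancel'' $m_\al$ inside the argument of $F$. I anticipate the cleanest route is to invoke the already-proved extremal structure — by Corollary \ref{17} (applied since a $3$-geodesic is a $3$-extremal and $m=3\ge3$) the map $\phi$ is a $2$-extremal — and then upgrade $2$-extremality to $2$-geodesity by exhibiting the left inverse, using the peak/uniqueness rigidity of degree-$2$ Blaschke products to pin down $F$ on the one-dimensional set $f(\DD)$.

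For the final sentence, suppose in addition that $f$ is itself a $2$-geodesic, so there is $G\in\OO(D,\DD)$ with $G\circ f$ a non-constant Blaschke product of degree $\le 1$, i.e. $G\circ f=\eta\, m_\delta$ for some $\eta\in\TT$, $\delta\in\DD$. Since $f(\al)=0$, we get $G(0)=\eta\, m_\delta(\al)$, and evaluating the degree-$1$ relation forces $G\circ f$ to behave like a single Möbius factor; but $f=m_\al^{k}\phi$ already carries the full factor $m_\al$ in every coordinate (each $k_j\ge1$), so $G\circ f$ would have to absorb this $m_\al$ while remaining degree $\le 1$. The plan is to show this is incompatible with $\phi(\DD)\su D$: if $\phi$ mapped into $D$, then $G\circ f=(G\circ(m_\al^{k}\phi))$ would, after extracting $m_\al$, leave a non-constant bounded holomorphic function that must be unimodular-constant, contradicting non-degeneracy unless $\phi(\DD)\su\pa D$. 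Thus the degree-$1$ hypothesis on $f$ collapses the dichotomy to its boundary alternative, giving $\phi(\DD)\su\pa D$. I expect the main obstacle here, as above, to be the rigorous extraction of the $m_\al$ factor from $G\circ f$ — controlling that the quotient stays holomorphic and bounded and then applying the maximum principle to force it onto $\pa D$.
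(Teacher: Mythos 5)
There is a genuine gap, and it sits exactly where you flagged uncertainty. Your dichotomy via Lemma \ref{fi} matches the paper, and you correctly isolate the hard point: when $\phi(\DD)\su D$, a $2$-left inverse for $\phi$ must be manufactured, and $F$ cannot simply be ``divided'' by $m_\al$. But your fallback route --- get $2$-extremality of $\phi$ from Corollary \ref{17} and then ``upgrade $2$-extremality to $2$-geodesity'' --- is not available: whether a $2$-extremal must be a $2$-geodesic is precisely the paper's open problem (P\ref{2e2g}), and the only general upgrade tool, the Lempert theorem, requires convexity, whereas $D$ here is merely $k$-balanced pseudoconvex. The remark about ``peak/uniqueness rigidity of degree-$2$ Blaschke products pinning down $F$ on $f(\DD)$'' determines nothing beyond $F\circ f$ itself and gives no holomorphic function on all of $D$ that left-inverts $\phi$. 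So the central object of the proof is never constructed.

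The paper's actual mechanism is an implicit-function construction via Rouch\'e. Normalize $\al=0$ and $F(0)=0$, so $F(f(\la))=\la$ (degree $1$) or $F(f(\la))=\la m_\gamma(\la)$ (degree $2$). For fixed $z\in D$ the function $g_z(\la):=F(\la^{k_1}z_1,\ldots,\la^{k_n}z_n)/\la$ is holomorphic near $\CDD$ and satisfies $|g_z|<1$ on $\TT$. In the degree-$2$ case Rouch\'e applied to $g_z-m_\gamma$ (against $m_\gamma$, unimodular on $\TT$) yields exactly one root $G(z)\in\DD$ of $F(\la^{k_1}z_1,\ldots,\la^{k_n}z_n)=\la m_\gamma(\la)$; the graph of $G$ is an analytic set, so $G\in\OO(D,\DD)$, and since $g_{\phi(\mu)}(\mu)=m_\gamma(\mu)$ one gets $G\circ\phi=\id_\DD$ --- note the left inverse produces $\id_\DD$, not the quotient $B/m_\al=m_\gamma$ your division heuristic predicts. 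In the degree-$1$ case the same comparison shows $g_z-1$ has no zeros in $\DD$ for any $z\in D$, while $g_{\phi(\mu)}(\mu)=1$ whenever $\phi(\DD)\su D$; this contradiction forces $\phi(\DD)\su\pa D$ and is the entire content of the ``additionally'' claim. Your sketch for that part (extracting $m_\al$ from $G\circ f$ and concluding a unimodular constant) does not reduce to this and, as written, has no working mechanism, since again nothing lets you pass the M\"obius factor through the argument of $G$.
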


Before showing it, recall

\begin{thm}[\cite{ekz}, Theorem 3]
Let $D\su\CC^n$ be a $k$-balanced pseudoconvex domain and let $f:\DD\lon D$ be a $2$-geodesic. Assume that $f=(m_\al^{k_1}\phi_1,\ldots,m_\al^{k_n}\phi_n)$, $\phi_j\in\OO(\DD)$, $\al\in\DD$. Then either $\phi:=(\phi_1,\ldots,\phi_n)$ is a $2$-geodesic of $D$ or $\phi(\DD)\su\pa D$.
\end{thm}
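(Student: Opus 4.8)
The plan is to begin with Lemma \ref{fi}, which applies to $f$ and its decomposition and yields the dichotomy $\phi(\DD)\su D$ or $\phi(\DD)\su\pa D$. In the second case there is nothing to prove, so throughout I assume $\phi(\DD)\su D$ and aim at producing a $2$-left inverse of $\phi$. Fix a $2$-left inverse $F\in\OO(D,\DD)$ of $f$; replacing $F$ by $(F\circ f)^{-1}\circ F$ (legitimate since $F\circ f\in\aut(\DD)$) I normalize $F\circ f=\id_\DD$. Now encode the quasi-balancedness as a holomorphic $\TT$-action: for $\zeta\in\CDD$ set $\Phi_\zeta(z):=(\zeta^{k_1}z_1,\ldots,\zeta^{k_n}z_n)$, so that $\Phi_\zeta(D)\su D$ by the defining property of $k$-balanced domains, $\Phi_\zeta\in\aut(D)$ for $\zeta\in\TT$, and $\Phi_\zeta\circ\Phi_\eta=\Phi_{\zeta\eta}$. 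The decomposition reads $f(\la)=\Phi_{m_\al(\la)}(\phi(\la))$, so with $\Psi_z(\zeta):=F(\Phi_\zeta(z))\in\OO(\CDD,\DD)$ for $z\in D$ I record the key identity
$$\Psi_{\phi(\la)}(m_\al(\la))=F(f(\la))=\la,\qquad\la\in\DD.$$
Since $\{\Phi_\zeta(z):\zeta\in\CDD\}$ is a compact subset of $D$, one has $\sup_{\CDD}|\Psi_z|<1$; in particular no $\Psi_z$ is an automorphism of $\DD$.

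The heart of the argument is a Schwarz--Pick rigidity in the $\zeta$-variable. Reorder coordinates so that $k_1=\ldots=k_s=0$ and $k_{s+1},\ldots,k_n\ge 1$. Then $\Phi_0$ annihilates exactly the last $n-s$ coordinates while fixing the first $s$, and since the balanced part of $f$ vanishes at $\al$ one checks $\Phi_0(\phi(\al))=f(\al)$. When $s=0$ (all $k_j\ge 1$) we have $\Phi_0\equiv 0$, hence $\Psi_{\phi(\la)}(0)=F(0)=F(f(\al))=\al$; together with $\Psi_{\phi(\la)}(m_\al(\la))=\la$ and $\bs p(0,m_\al(\la))=\bs p(\al,\la)$ this says that $\Psi_{\phi(\la)}$ preserves the Poincar\'e distance between the two distinct points $0$ and $m_\al(\la)$, forcing $\Psi_{\phi(\la)}\in\aut(\DD)$ --- impossible by the previous paragraph. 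Thus for purely balanced weights the case $\phi(\DD)\su D$ cannot occur and we automatically land in $\phi(\DD)\su\pa D$; this shows that the $2$-geodesic alternative is carried entirely by the coordinates with $k_j=0$.

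In the general case the natural candidate for a left inverse of $\phi$ is $G:=F\circ\Phi_0\in\OO(D,\DD)$, for which $\chi:=G\circ\phi=\Psi_{\phi(\cdot)}(0)$ satisfies $\chi(\al)=F(\Phi_0(\phi(\al)))=F(f(\al))=\al$. By Schwarz's lemma at the fixed point $\al$ we have $|\chi'(\al)|\le 1$, with equality precisely when $\chi\in\aut(\DD)$, i.e.\ precisely when $\phi$ is a $2$-geodesic witnessed by $G$. To pin down the derivative I would exploit the rotated competitors $F_\theta:=F\circ\Phi_{e^{i\theta}}\in\OO(D,\DD)$: each $F_\theta\circ f$ fixes $\al$, its derivative at $\al$ equals $\chi'(\al)+e^{i\theta}S$ with $S:=\sum_{k_j=1}\pa_{z_j}F(f(\al))f_j'(\al)$, and Schwarz's lemma gives $|\chi'(\al)+e^{i\theta}S|\le 1$ for every $\theta$. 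Combined with the normalization $\chi'(\al)+S=1$ (the case $\theta=0$, where $F_0\circ f=\id$), the triangle inequality forces $\chi'(\al)$ and $S$ to be nonnegative reals with $\chi'(\al)+S=1$.

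The \textbf{main obstacle} is exactly to upgrade this to $\chi'(\al)=1$, equivalently $S=0$: one must show that the balanced coordinates contribute nothing to the derivative of the left inverse along $f$ at $\al$, i.e.\ that the $\TT$-symmetrization $F\circ\Phi_0$ of $F$ is still a genuine left inverse of $f$. I expect this to follow by taking $F$ extremal for the Carath\'eodory--Reiffen metric at $f(\al)$ in the direction $f'(\al)$ --- which the $2$-geodesity of $f$ makes natural --- and verifying that such an extremal is invariant under the $\TT$-action in the directions seen by $f'(\al)$, so that $S=0$. Once $S=0$, Schwarz's lemma gives $\chi\in\aut(\DD)$ and hence that $\phi$ is a $2$-geodesic with left inverse $G$. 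This final rigidity step, rather than the reductions preceding it, is where I expect the real work to lie.
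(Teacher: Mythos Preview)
Your reduction via Lemma~\ref{fi} and your treatment of the purely balanced case $s=0$ are correct; the Schwarz--Pick rigidity there is a clean alternative to the Rouch\'e count and reaches the same conclusion that $\phi(\DD)\su D$ is impossible when all $k_j\ge 1$. The genuine gap is in the mixed case, and you have located it yourself: you cannot establish $S=0$, and the appeal to Carath\'eodory--Reiffen extremality of $F$ is speculation, not an argument. More to the point, your candidate $G=F\circ\Phi_0$ is the wrong object. There is no reason for $F(\phi_1(\la),\ldots,\phi_s(\la),0,\ldots,0)$ to equal $\la$; what you actually know is $F(\phi_1(\la),\ldots,\phi_s(\la),\la^{k_{s+1}}\phi_{s+1}(\la),\ldots,\la^{k_n}\phi_n(\la))=\la$, so $\chi=G\circ\phi$ need not be the identity and the derivative rigidity cannot close.

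The argument in \cite{ekz} (mirrored in the paper's proof of Proposition~\ref{39}) builds the left inverse implicitly, and you already have the right ingredient in hand. Normalize $\al=0$, $F\circ f=\id_\DD$, and for $z\in D$ look at your own function $\Psi_z(\mu)=F(\Phi_\mu(z))$ on a neighborhood of $\CDD$. You observed $\sup_{\CDD}|\Psi_z|<1$; hence $|\Psi_z(\mu)|<1=|\mu|$ on $\TT$, and by Rouch\'e the function $\Psi_z(\mu)-\mu$ has exactly one zero $G(z)\in\DD$. The graph $\{(z,\mu)\in D\times\DD:\Psi_z(\mu)=\mu\}$ is analytic, so $G\in\OO(D,\DD)$. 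Your key identity $\Psi_{\phi(\la)}(\la)=\la$ now says precisely that $\la$ is this unique fixed point, i.e.\ $G\circ\phi=\id_\DD$, and $\phi$ is a $2$-geodesic. (When all $k_j\ge 1$ one has $\Psi_z(0)=F(0)=0$ for every $z$, whence $G\equiv 0$, and $G(\phi(\la))=\la$ is the contradiction forcing $\phi(\DD)\su\pa D$ --- your conclusion recovered by the other route.) The missing idea is thus not to evaluate $\Psi_z$ at $0$ but to solve $\Psi_z(\mu)=\mu$.
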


We will proceed very similarly as in that proof.

\begin{proof}[Proof of Proposition \ref{39}]
We can assume that $\al=0$, so $f(0)=0$. We know from Lemma \ref{fi} that either $\phi\in\OO(\DD,D)$ or $\phi\in\OO(\DD,\partial D)$. Suppose that the first case holds. Let $F\in\OO(D,\DD)$ be such that $F\circ f$ is a Blaschke product of degree 1 or 2. One may assume that $F(0)=0$, thus either $$F(f(\la))=\la,\quad\text{then denote }m:=1,$$ or $$F(f(\la))=\la m_\gamma(\la)\text{ for some }\gamma\in\DD,\quad\text{then }m:=m_\gamma.$$

Fix $z\in D$ and consider holomorphic functions defined on a neighborhood of $\CDD$ $$g_z:\la\longmapsto F(\la^{k_1}z_1,\ldots,\la^{k_n}z_n)/\la,\quad m:\lambda\longmapsto m(\lambda).$$ Since $|g_z(\la)|<1=|m(\la)|$ for $\lambda\in\TT$, the Rouch\'e theorem implies that the function $\DD\owns\lambda\longmapsto g_z(\la)-m(\la)\in\CC$ has in $\DD$ the same number of zeros as $m$. 

Therefore, it has no zeros if $m=1$. This fails for $z\in\phi(\DD)$, so the assumption $\phi(\DD)\su D$ is false in that case. The `additionally' claim is proved.

If $m=m_\gamma$, then the function $g_z(\la)-m(\la)$ has in $\DD$ exactly one root $G(z)$. Since the graph of the function $G:D\lon\DD$, equal to $$\{(z,\lambda)\in D\times\DD:F(\la^{k_1}z_1,\ldots,\la^{k_n}z_n)=\la m_\gamma(\la)\}$$ is an analytic set, we get that $G$ is holomorphic (\cite[Chapter V, \S 1]{l}, cf. \cite{chi} and \cite[Sekcja 5.5]{jjp}). Moreover, it follows from the definition that $G(\phi(\lambda))=\lambda$ for $\lambda\in\DD$, which finishes the proof.
\end{proof}

We finish the section with the following property.

\begin{lem}\label{11}
Let $D\su\CC^n$ be a $k$-balanced pseudoconvex domain and let $\phi\in\OO(\DD,\pa D)$, $\al\in\DD$, $k_1,\ldots,k_n\leq 1$. Then $f:=(m_\al^{k_1}\phi_1,\ldots,m_\al^{k_n}\phi_n):\DD\lon D$ is a weak $2$-extremal for $\al$ and $\mu\in\DD\sm\{\al\}$.

In particular, in the balanced case for any $a\in\pa D$ the map $\DD\ni\la\longmapsto\la a\in D$ is a weak $2$-extremal for $0$ and $\mu\in\DD_*$.
\end{lem}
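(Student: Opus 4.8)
The plan is to show that the map $f=(m_\al^{k_1}\phi_1,\ldots,m_\al^{k_n}\phi_n)$, where $\phi(\DD)\su\pa D$, is a weak $2$-extremal for the two points $\al$ and an arbitrary $\mu\in\DD\sm\{\al\}$. Recall from Lemma \ref{28}$(a)$ that this amounts to ruling out the existence of any $g\in\OO(\DD,D)$ with $g(\al)=f(\al)$, $g(\mu)=f(\mu)$ and $g(\DD)\su\su D$. Since $k_1,\ldots,k_n\le 1$, each exponent $k_j$ is either $0$ or $1$, and $m_\al(\al)=0$, so $f_j(\al)=0$ whenever $k_j=1$, while $f_j(\al)=\phi_j(\al)$ when $k_j=0$. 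Thus $f(\al)$ is the point whose coordinates are $0$ in the positions with $k_j=1$ and $\phi_j(\al)$ in the positions with $k_j=0$.

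First I would argue by contradiction: suppose such an interpolating $g$ with $g(\DD)\su\su D$ exists. The strategy is to divide out the Möbius factor and use the pseudoconvexity-plus-quasi-balanced structure to produce a competitor contradicting the boundary condition $\phi(\DD)\su\pa D$. Concretely, since $g(\al)=f(\al)$ and $f_j(\al)=0$ in the coordinates with $k_j=1$, the function $g_j/m_\al^{k_j}$ extends holomorphically across $\al$ (the zero of $m_\al$ at $\al$ is simple, and $g_j$ vanishes there to order at least one). Set $\psi:=(g_1/m_\al^{k_1},\ldots,g_n/m_\al^{k_n})$; this is holomorphic on $\DD$ and, because $|m_\al|<1$ on $\DD$ and $|m_\al|=1$ on $\TT$, the quasi-balanced structure forces $\psi$ to map into $D$ as well. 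Indeed, applying Lemma \ref{fi} (or rather its proof, via the $k$-Minkowski function $h$ with $\log h\in\psh$) to $g=(m_\al^{k_1}\psi_1,\ldots,m_\al^{k_n}\psi_n)$ together with $g(\DD)\su\su D$ gives $\psi\in\OO(\DD,D)$.

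The key comparison is then between $\psi$ and $\phi$ at the single point $\mu$. From $g(\mu)=f(\mu)=(m_\al(\mu)^{k_1}\phi_1(\mu),\ldots,m_\al(\mu)^{k_n}\phi_n(\mu))$ and the definition of $\psi$ we get $\psi_j(\mu)=g_j(\mu)/m_\al(\mu)^{k_j}=\phi_j(\mu)$ in every coordinate, so $\psi(\mu)=\phi(\mu)$. But $\phi(\DD)\su\pa D$, hence $\phi(\mu)\in\pa D$, whereas $\psi(\mu)\in\psi(\DD)\su D$ is an interior point; this is the desired contradiction. The main obstacle, and the step deserving genuine care, is justifying that $\psi$ lands in $D$ rather than merely in $\CDD$: this is exactly where pseudoconvexity enters through $\log h\in\psh(\CC^n)$ and the maximum principle, and one must treat the mixed case $k_j\in\{0,1\}$ as in part $(b)$ of Lemma \ref{fi} (projecting onto the coordinates with $k_j=0$ and viewing the rest as a Hartogs domain). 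Because $g(\DD)\su\su D$ gives strict inequality $h\circ g<1$ on $\CDD$, the subharmonicity of $\lambda\mapsto\log h(\psi(\lambda))=\log h(g(\lambda))-k_{\max}\log|m_\al(\lambda)|$ type estimate keeps $\psi$ strictly inside $D$, avoiding the boundary alternative.

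For the \emph{in particular} statement, take $D$ balanced, so $k=(1,\ldots,1)$, and let $a\in\pa D$; then the constant map $\phi\equiv a$ satisfies $\phi(\DD)=\{a\}\su\pa D$, and with $\al=0$ the map $f(\la)=m_0(\la)a=\la a$ is precisely of the required form. Applying the first part with $\mu\in\DD_*$ arbitrary yields that $\la\mapsto\la a$ is a weak $2$-extremal for $0$ and $\mu$, as claimed.
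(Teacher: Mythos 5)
Your proof is correct and follows essentially the same route as the paper: divide the interpolating competitor by the M\"obius factor, show the quotient maps into $D$ via the $k$-Minkowski function and pseudoconvexity, and contradict $\phi(\mu)\in\pa D$. The only cosmetic difference is that the paper normalizes $\al=0$ and takes $h\in\OO(\CDD,D)$, so the ``resp.'' case of Lemma \ref{fi} puts the quotient in $\OO(\CDD,D)$ at once, whereas your reformulation on $\DD$ with $g(\DD)\su\su D$ (via Lemma \ref{28}$(a)$) forces you to rule out the boundary alternative by hand --- which you do correctly with the maximum principle, though note the exact homogeneity is $h\circ g=|m_\al|\,(h\circ\psi)$, with weight $1$ rather than $k_{\max}$.
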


\begin{proof}
One can assume that $\al=0$ and $f(\la)=(\la\psi(\la),\wi\psi(\la))$, $\la\in\DD$, where $\psi=(\phi_1,\ldots,\phi_s)$, $\wi\psi=(\phi_{s+1},\ldots,\phi_n)$ for some $1\le s\le n$. Suppose that $h\in\OO(\CDD,D)$ satisfies $h(0)=(0,\wi\psi(0))$ and $h(\mu)=(\mu\psi(\mu),\wi\psi(\mu))$. Then $h(\la)=(\la g(\la),\wi g(\la))$, $\la\in\DD$, for some map $(g,\wi g)\in\OO(\CDD,D)$. This contradicts the equality $(g,\wi g)(\mu)=(\psi(\mu),\wi\psi(\mu))=\phi(\mu)\in\pa D$.
\end{proof}

\section{Complex ellipsoids}
Let $p=(p_1,\ldots,p_n)\in\RR_{>0}^n$. The domain $$\ee(p):=\{z\in\CC^n:|z_1|^{2p_1}+\ldots+|z_n|^{2p_n}<1\}$$ is said to be a \emph{complex ellipsoid}. Write moreover $$\ee(\underbrace{p_0,\ldots,p_0}_n)=:\ee(p_0)\su\CC^n,\quad p_0>0,$$ for \emph{symmetric} complex ellipsoids. The unit Euclidean ball, shortly the \emph{ball}, is clearly $$\BB_n:=\ee(1)\su\CC^n.$$

\begin{rem}
\begin{enumerate}[$(a)$]
\item $\ee(p)$ is $k$-balanced and pseudoconvex, $k\in(\NN_0^n)_*$.
\item If $n\ge 2$, then $\ee(p)$ is convex if and only if $p_1,\ldots,p_n\ge 1/2$.
\item If $n\ge 2$, then $\ee(p)$ is $\cC^2$-smooth if and only if $p_1,\ldots,p_n\ge 1$.
\end{enumerate}
\end{rem}

In \cite[Proposition 11]{kz} are given $m$-extremals being not $m$-geodesics for $m\ge 4$ in $\BB_n$, $n\ge 2$. In Section \ref{23} we show that it is not possible in the ball for $m=3$. Below we have in particular 3-extremals, which are not 3-geodesics in a convex domain (Proposition \ref{ab} delivers other ones).

\begin{prop}\label 1
Let $m\ge 3$ and $0<a<1$. Then the map $$f(\la):=(a\la^{m-2},(1-a)\la^{m-1}),\quad\la\in\DD,$$ is an $m$-extremal, but not an $m$-geodesic of\, $\ee(1/2)\su\CC^2$.
\end{prop}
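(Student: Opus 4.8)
The plan is to prove the two assertions separately, after the usual normalisation. Composing with automorphisms I may assume everything is centred at $0$; then $f(0)=0$, and on $\TT$ one has $|f_1|\equiv a$, $|f_2|\equiv 1-a$, so $f(\TT)\su a\TT\times(1-a)\TT\su\pa\ee(1/2)$ (a smooth strictly pseudoconvex piece of the boundary). Writing $f=(\la^{m-2}\cdot a,\ \la^{m-2}\cdot(1-a)\la)=(\la^{m-2}\phi_1,\la^{m-2}\phi_2)$ with $\phi:=(a,(1-a)\la)$, I record that $\phi$ is a $2$-geodesic of $\ee(1/2)$: the function $\ee(1/2)\ni z\longmapsto z_2/(1-z_1)\in\DD$ is a $2$-left inverse (it maps $\ee(1/2)$ into $\DD$ since $|z_2|<1-|z_1|\le|1-z_1|$, and pulls $\phi$ back to $\id_\DD$). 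In particular $\phi$ is a weak $2$-extremal for any pair in $\DD$.

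For the $m$-extremality I would obtain weak $m$-extremality first and then upgrade. For $m=3$ this is self-contained: Lemma \ref{10}$(b)$ applied to the weak $2$-extremal $\phi$ with $k=(1,1)\le1$, new node $\la_3=0$ and multiplier $m_0^{1}=\la$ gives $\psi_{(1)}=(\la\cdot a,\la\cdot(1-a)\la)=f$ as a weak $3$-extremal for $\{\la_1,\la_2,0\}$ (with $\la_1,\la_2\in\DD\sm\{0\}$). For general $m$ I would read off weak $m$-extremality from Edigarian's description of weak $m$-extremals in complex ellipsoids \cite{e}, since the monomial map $f$ realises exactly the extremal boundary profile ($|f_1|+|f_2|\equiv1$ on $\TT$ with vanishing orders $m-2$ and $m-1$ at $0$). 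Finally, as $(1/2,1/2)\in\sss_2$, Proposition \ref{31} promotes weak $m$-extremality to $m$-extremality. This half leans on the cited structural results; the self-contained novelty is the failure of $m$-geodesity, which I treat next.

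Suppose, for contradiction, that $F\in\OO(\ee(1/2),\DD)$ is an $m$-left inverse, so $B:=F\circ f$ is a non-constant Blaschke product of degree $\le m-1$. Composing with a Möbius automorphism of $\DD$ (which preserves $\OO(\ee(1/2),\DD)$ and the class of Blaschke products of a fixed degree) I may assume $F(0)=0$, hence $B(0)=0$. Expanding $F$ in its Taylor series on the complete Reinhardt domain, $F(z)=\sum_{\al,\beta\ge0}c_{\al\beta}z_1^\al z_2^\beta$, I get
$$B(\la)=\sum_{\al,\beta\ge0}c_{\al\beta}\,a^\al(1-a)^\beta\,\la^{(m-2)\al+(m-1)\beta}.$$
The only pair giving exponent $m-2$ is $(1,0)$; the only pair giving $m-1$ is $(0,1)$ (and, when $m=3$, also $(2,0)$), while every other nonzero pair gives an exponent strictly above $m-1$ for $m\ge4$ (the next being $2m-4$). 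Hence $\txn{ord}_0 B\in\{m-2,m-1\}$, and since $\deg B\le m-1$ this forces $B\in\{\zeta\la^{m-1},\ \zeta\la^{m-2},\ \zeta\la^{m-2}m_\gamma\}$ for some $\zeta\in\TT$, $\gamma\in\DD$.

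The contradiction then comes from restricting $F$ to the coordinate discs $\{(z_1,0):|z_1|<1\}$ and $\{(0,z_2):|z_2|<1\}$, both contained in $\ee(1/2)$: the Schwarz lemma yields $|c_{10}|\le1$, $|c_{01}|\le1$, and, when $c_{10}\ne0$, the second-coefficient bound $|c_{20}|\le1-|c_{10}|^2$. Matching the coefficients of $\la^{m-2}$ and $\la^{m-1}$ in $B$ gives in each case a numerical impossibility for $0<a<1$: $B=\zeta\la^{m-2}$ forces $|c_{10}|=1/a>1$; $B=\zeta\la^{m-1}$ forces $|c_{01}|=1/(1-a)>1$; and $B=\zeta\la^{m-2}m_\gamma$ forces $|\gamma|\le a$ (from $\la^{m-2}$) together with $|\gamma|\ge\sqrt a$ (from $\la^{m-1}$), i.e. $a\ge1$. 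The main obstacle is exactly the case $m=3$, where the exponents of $(0,1)$ and $(2,0)$ collide at $\la^2$, so the $\la^{m-1}$-coefficient of $B$ is $c_{01}(1-a)+c_{20}a^2$ rather than $c_{01}(1-a)$; here the refined bound $|c_{20}|\le1-|c_{10}|^2$ is precisely what rescues the estimate, and all three subcases collapse to $1\le1-a+a^2$, i.e. $a(a-1)\ge0$, which is false. Working with the full Taylor expansion (not merely the linear part of $F$) is what lets this argument exclude all, in particular non-linear, candidate left inverses at once.
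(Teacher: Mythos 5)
Your non-geodesity argument (the half you call self-contained) is correct, and it is in substance the paper's own proof: normalize $F(0)=0$, pin down $B$ by the order of vanishing at $0$ to one of $\zeta\la^{m-2}$, $\zeta\la^{m-1}$, $\zeta\la^{m-2}m_\gamma$, match coefficients at $\la^{m-2}$ and $\la^{m-1}$, and for $m=3$ use the second-coefficient bound $|c_{20}|\le 1-|c_{10}|^2$ on the slice $z_2=0$ --- which is exactly the paper's estimate $|\al|^2+|\delta|\le 1$ obtained from $h:=m_\al\circ g$, $g(z_1)=F(z_1,0)/z_1$. The only cosmetic difference is that you get $|c_{10}|,|c_{01}|\le 1$ by restricting to the coordinate discs, while the paper uses $g_z(\la):=F(\la z)/\la$ to see that $\al z_1+\beta z_2\in\DD$ on all of $\ee(1/2)$; your numerical contradictions agree with the paper's \eqref{44} computations (for $B=\zeta\la$ when $m=3$ the contradiction is simply $|c_{10}|=1/a>1$, not the collapsed inequality, but that is harmless).

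The $m$-extremality half, however, has a genuine gap for $m\ge 4$: you propose to ``read off'' weak $m$-extremality from Edigarian's theorem, but Theorem \ref{edi} runs in the opposite direction --- it gives a \emph{necessary} form \eqref{22} for weak $m$-extremals and asserts no sufficiency whatsoever. Whether maps of the form \eqref{22} are even weak $l$-extremals is precisely the open problem (P\ref{35}) recorded in the paper; for $p\in\sss_n$ the converse is the content of Proposition \ref{31}$(b)$, which you would have to \emph{cite} (after checking that $f$ has the form \eqref{22} with all $\al_{kj}=\al_{k0}=0$, $r_{kj}$ as needed, and side condition $a+(1-a)=1$), not derive from \cite{e}. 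As written, weak $m$-extremality is never established for $m\ge 4$, so Proposition \ref{31}$(a)$ has nothing to upgrade. The repair lies inside your own setup: Lemma \ref{10}$(b)$, which you invoke only with $l=1$, holds for every $l\in\NN$, and with $l=m-2$ it shows at once that $f=(\la^{m-2}\cdot a,\,\la^{m-2}\cdot(1-a)\la)$ is a weak $3$-extremal for $\la_1,\la_2,0$, hence a weak $m$-extremal, after which Proposition \ref{31}$(a)$ applies. The paper's own route is lighter still and avoids both Edigarian and Proposition \ref{31} (fitting, since Proposition \ref 1 precedes them in the text): the map $\la\longmapsto(a\la^{m-1},(1-a)\la^{m-1})$ is an $m$-geodesic with $m$-left inverse $z\longmapsto z_1+z_2$, hence an $m$-extremal, and Lemma \ref{10}$(a)(i)$ with $k=(1,0)$, $\al=0$ transfers $m$-extremality directly to $f$, for every $m$-tuple at once.
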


\begin{proof}
The mapping $$\DD\ni\la\longmapsto(a\la^{m-1},(1-a)\la^{m-1})\in\ee(1/2)$$ is an $m$-geodesic (the $m$-left inverse $z\longmapsto z_1+z_2$), so Lemma \ref{10}$(a)(i)$ says that $f$~is an $m$-extremal. Suppose that there exists a holomorphic function $F:\ee(1/2)\lon\DD$ such that $F\circ f$ is a~non-constant Blaschke product of degree $\leq m-1$. We can assume that $F(0)=0$, whence due to the Taylor expansion it follows that (with exactness up to a unimodular constant) either $F(f(\la))=\la^{m-2}$ or $F(f(\la))=\la^{m-2}m_\gamma(\la)$ for some $\gamma\in\DD$. 

In the first case we have $F(z)=z_1/a$, which is impossible. 

For the second case expand $F(z)=\alpha z_1+\beta z_2+\delta z_1^2+\ldots$ With fixed $z\in\ee(1/2)$, the function $g_z(\la):=F(\la z)/\la$, defined in a neighborhood of $\CDD$, is smaller than 1~in modulus on $\TT$. It follows that $g_z(0)=\alpha z_1+\beta z_2\in\DD$ for $z\in\ee(1/2)$. Hence $|\alpha|,|\beta|\leq 1$. From the comparison of the coefficients in the equation $$\la^{m-2}m_\gamma(\la)=F(f(\la)),\quad\la\in\DD,$$ we have \begin{align*}-\gamma&=\alpha a,\\1-|\gamma|^2&=\begin{cases}\beta(1-a)+\delta a^2,&m=3,\\\beta(1-a),&m\geq 4.\end{cases}\end{align*}

Consider first the possibility $m\ge 4$. We obtain \begin{equation}\label{44}1=|\alpha|^2a^2+\beta(1-a)\le a^2+1-a,\end{equation} contradiction.

For $m=3$ let the function $g:\DD\lon\CDD$ be given by $g(z_1):=F(z_1,0)/z_1=\alpha+\delta z_1+\ldots$

If $|\alpha|=1$, then $g$ is constant, in particular $\delta=0$. To get a contradiction use \eqref{44} (or note that $|\gamma|=a$, so $\beta=1+a$).

Otherwise $g$ has values in $\DD$. The function $h:=m_\alpha\circ g:\DD\lon\DD$ satisfies $h(0)=0$, hence $$1\geq|h'(0)|=\frac{|\delta|}{1-|\alpha|^2},\quad|\al|^2+|\delta|\leq 1.$$ This gives $$1=|\alpha|^2a^2+\delta a^2+\beta(1-a)\le a^2+1-a,$$ which finishes the proof.
\end{proof}

\begin{cor}[cf. Remark \ref{ana}$(a)$]\label{a}
Let $m\ge 4$ and $0<a<1$. Then the mapping $f:\DD\lon\ee(1/2)\su\CC^2$, $$f(\la):=(a\la^{m-1},(1-a)\la^{m-1}),\quad\la\in\DD,$$ is an $m$-geodesic such that $\phi(\la):=(f_1(\la)/\la^2,f_2(\la)/\la)$ is not an $(m-1)$-geodesic.
\end{cor}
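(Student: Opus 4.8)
The plan is to reduce the whole statement to Proposition \ref 1 at the lower level $m-1$, so that essentially no new analytic work is needed. First I would establish that $f$ is an $m$-geodesic by producing an explicit $m$-left inverse. The obvious candidate is the linear functional $F(z):=z_1+z_2$; since $|z_1+z_2|\le|z_1|+|z_2|<1$ for every $z\in\ee(1/2)$, we have $F\in\OO(\ee(1/2),\DD)$, and a direct computation gives $F(f(\la))=a\la^{m-1}+(1-a)\la^{m-1}=\la^{m-1}$, a non-constant Blaschke product of degree $m-1$. Thus $F$ is an $m$-left inverse of $f$, and $f$ is an $m$-geodesic.

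Next I would simply compute and recognise $\phi$. One has $\phi(\la)=(a\la^{m-3},(1-a)\la^{m-2})$, which, upon setting $m':=m-1$, is precisely $\phi(\la)=(a\la^{m'-2},(1-a)\la^{m'-1})$ --- the map treated in Proposition \ref 1 at level $m'$. Here the hypothesis $m\ge 4$ is exactly what guarantees $m'\ge 3$, so that Proposition \ref 1 applies (and that $\phi$ is a genuine holomorphic, indeed polynomial, map of $\DD$ into $\ee(1/2)$ with $\phi(0)=0$, hence $\phi(\DD)\not\su\pa\ee(1/2)$). Proposition \ref 1 then yields that this map is not an $m'$-geodesic, i.e. $\phi$ is not an $(m-1)$-geodesic, which is the assertion.

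I do not expect any genuine obstacle: all the content already sits inside Proposition \ref 1, and the only point demanding attention is the index bookkeeping matching $\phi$ with the level-$(m-1)$ instance of that proposition. It is worth recording that this is exactly the promised counterexample of Remark \ref{ana}$(a)$: writing $f=(m_0^{k_1}\phi_1,m_0^{k_2}\phi_2)$ with $\al=0$ and $k=(2,1)\ne(1,\ldots,1)$, the map $f$ is an $m$-geodesic whose associated $\phi$ fails to be an $(m-1)$-geodesic, so the geodesic analogue of Corollary \ref{17} is false.
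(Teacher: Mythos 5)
Your proposal is correct and coincides with the argument the paper intends: the corollary is stated as an immediate consequence of Proposition \ref 1, whose proof already records that $z\longmapsto z_1+z_2$ is an $m$-left inverse of $\la\longmapsto(a\la^{m-1},(1-a)\la^{m-1})$, and your identification $\phi(\la)=(a\la^{m-3},(1-a)\la^{m-2})$ as the Proposition \ref 1 map at level $m-1\ge 3$ (this is where $m\ge 4$ enters) is exactly the intended reduction. Your closing remark tying this to Remark \ref{ana}$(a)$ via the decomposition with $k=(2,1)\neq(1,1)$ and $\al=0$ is also the correct reading of why this serves as the promised counterexample to the geodesic analogue of Corollary \ref{17}.
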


\begin{prop}[cf. Remark \ref{ana}$(b)$]\label{ab}
Let $m\ge 4$ and let numbers $a,b>0$ be such that $4a^2+b=1$. Then the mapping $$f:\DD\lon D:=\{z\in\CC^3:(|z_1|+|z_2|)^2+|z_3|<1\},$$ $$f(\la):=(a\la,a\la^{m-2},b\la^{m-1}),$$ is an $m$-geodesic such that $\phi(\la):=f(\la)/\la$ is not an $(m-1)$-geodesic.
\end{prop}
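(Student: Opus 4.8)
\textbf{Proof strategy for Proposition \ref{ab}.}

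The plan is to mimic the two-part structure of Proposition \ref{1}: first exhibit an explicit $m$-left inverse showing that $f$ is an $m$-geodesic, then suppose toward a contradiction that $\phi$ admits an $(m-1)$-left inverse and derive a numerical inequality that fails. The domain $D=\{(|z_1|+|z_2|)^2+|z_3|<1\}$ is $(1,1,1)$-balanced (balanced) and convex; note that $\phi(\la)=f(\la)/\la=(a,a\la^{m-3},b\la^{m-2})$, so the ``division by $\la$'' here is exactly the operation governed by Lemma \ref{10} and Corollary \ref{17}, with $\al=0$ and $k=(1,1,1)$. First I would show $f$ is an $m$-geodesic by guessing the left inverse $F(z):=z_1+z_3$ (or a similar linear functional adapted to the constraint $4a^2+b=1$): one checks that $F$ maps $D$ into $\DD$ — the defining inequality of $D$ controls $|z_1|$ and $|z_3|$ jointly — and that $F(f(\la))=a\la+b\la^{m-1}$ is a non-constant Blaschke product of degree $\le m-1$, or more likely that after composing with a Möbius map it becomes one; the precise linear combination must be chosen so that $F\circ f$ has modulus $1$ on $\TT$, which is where the relation $4a^2+b=1$ enters.

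Next, for the negative part, suppose $\phi$ is an $(m-1)$-geodesic, i.e. there is $G\in\OO(D,\DD)$ with $G\circ\phi$ a non-constant Blaschke product of degree $\le m-2$. Normalizing $G(\phi(0))=G(a,0,0)=0$ and expanding $G$ in a Taylor series about $(a,0,0)$, I would compare coefficients in the identity $G(\phi(\la))=(\text{Blaschke product})$, exactly as in Proposition \ref{1}. The key leverage is the same Schwarz-type bound: for fixed $z\in D$ the slice function $g_z(\la):=G(\la^{?}z)/\la^{?}$ (adapted to the balanced scaling) has modulus $<1$ on $\TT$, forcing inequalities on the low-order Taylor coefficients of $G$. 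Matching these against the coefficients forced by the Blaschke product — whose leading behavior is dictated by $\phi_2(\la)=a\la^{m-3}$ and $\phi_3(\la)=b\la^{m-2}$ — should again produce a single scalar identity of the form $1=(\text{convex combination bounded by }1)$ that is violated, giving the contradiction.

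The main obstacle I expect is the negative part, specifically keeping track of which Taylor coefficients of $G$ survive and how the non-smooth, non-ellipsoidal geometry of $D$ (the factor $(|z_1|+|z_2|)^2$ couples the first two coordinates) constrains those coefficients. Unlike the genuine ellipsoid $\ee(1/2)$ in Proposition \ref{1}, here the boundary has the $\ell^1$-type corner in the $(z_1,z_2)$ variables, so the estimate ``$g_z(0)\in\DD$ for all $z\in D$'' must be read off from this more delicate constraint; I would need to verify that the relevant linear functional $z\mapsto \alpha z_1+\beta z_2+\gamma z_3$ is bounded by $1$ on $D$ precisely when a suitable combination of $|\alpha|,|\beta|,|\gamma|$ is controlled, and then plug the forced values into the degree count. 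A secondary technical point, as in Proposition \ref{1}, is that the case analysis may split according to whether the relevant boundary Schwarz inequality is an equality (the ``$|\alpha|=1$'' branch), which must be handled separately but should collapse to the same contradiction.
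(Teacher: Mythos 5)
Your negative half is essentially the paper's argument in outline. The paper restricts the putative $(m-1)$-left inverse $F$ to the slice $z_1=a$, setting $G(z_2,z_3):=F(a,z_2,z_3)$ on $\{(z_2,z_3):(a+|z_2|)^2+|z_3|<1\}$, normalizes $G(0)=0$, expands $G=\alpha z_2+\beta z_3+\delta z_2^2+\ldots$, and bounds coefficients via the one-variable slices $z_2\mapsto G((1-a)z_2,0)/z_2$ and $z_3\mapsto G(0,(1-a^2)z_3)/z_3$. Two corrections to your sketch: your balanced-scaling slice $G(\la^{?}z)/\la^{?}$ is not available because the base point $\phi(0)=(a,0,0)$ is not the origin (this is why the paper freezes $z_1=a$ and uses coordinate discs of radii $1-a$ and $1-a^2$), and the resulting bounds are $|\alpha|\le 1/(1-a)$, $|\beta|\le 1/(1-a^2)$, not $\le 1$. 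The contradiction then reads $1=|\alpha|^2a^2+\beta b\le a^2/(1-a)^2+(1-4a^2)/(1-a^2)$, which simplifies to $1\le 2a$, impossible since $4a^2<1$; the $m=4$ case carries an extra $\delta a^2$ term handled by Schwarz--Pick, with the separate $|\alpha|(1-a)=1$ branch, exactly as you anticipate.

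The genuine gap is the positive half: no linear functional can serve as the $m$-left inverse, so your guess $F(z)=z_1+z_3$ (and the hedge about ``a similar linear functional'') fails on both counts. First, $z_1+z_3$ does not even map $D$ into $\DD$: the point $(t,0,1-t^2-\eps)\in D$ gives value $t+1-t^2-\eps>1$ for $0<t<1$ and small $\eps>0$. Second, no post-composition with a M\"obius map can repair this: $m_\mu\circ(F\circ f)$ is a finite Blaschke product if and only if $F\circ f$ is, i.e. if and only if $|a\zeta+b\zeta^{m-1}|=1$ on $\TT$, and $|a\zeta+b\zeta^{m-1}|^2=a^2+b^2+2ab\cos((m-2)\arg\zeta)$ is non-constant. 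More structurally, a polynomial that is a finite Blaschke product must be $\zeta\la^k$, so for any linear $F=\alpha z_1+\beta z_2+\gamma z_3$ the composition $\alpha a\la+\beta a\la^{m-2}+\gamma b\la^{m-1}$ would have to collapse to a single monomial, forcing $|\alpha|a=1$, $|\beta|a=1$ or $|\gamma|b=1$, each incompatible with $F(D)\su\DD$. The paper's key idea, which the proposal misses, is the quadratic polynomial $F(z):=4z_1z_2+z_3$: by the AM--GM inequality $4|z_1||z_2|\le(|z_1|+|z_2|)^2$ one gets $|F|<1$ on $D$ (this is precisely what the $\ell^1$-type factor $(|z_1|+|z_2|)^2$ is built for), while the monomial $z_1z_2$ pairs the components $a\la$ and $a\la^{m-2}$ so that $F(f(\la))=(4a^2+b)\la^{m-1}=\la^{m-1}$, a Blaschke product of degree $m-1$. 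Without this product term the hypothesis $4a^2+b=1$ never enters and the geodesity claim cannot be established.
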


\begin{proof}
The polynomial $4z_1z_2+z_3$ is an $m$-left inverse of $f$. Suppose that there is $F\in\OO(D,\DD)$ such that $$F(a,a\la^{m-3},b\la^{m-2})=B(\la),\quad\la\in\DD,$$ where $B$ is a non-constant Blaschke product of degree $\le m-2$. The function $$G:\{(z_2,z_3)\in\CC^2:(a+|z_2|)^2+|z_3|<1\}\ni(z_2,z_3)\longmapsto F(a,z_2,z_3)\in\DD$$ satisfies $$G(a\la^{m-3},b\la^{m-2})=B(\la),\quad\la\in\DD.$$ Assume that $G(0)=0$ and expand $G(z_2,z_3)=\alpha z_2+\beta z_3+\delta z_2^2+\ldots$ By considering the functions \begin{align*}g:\DD\ni z_2&\longmapsto G((1-a)z_2,0)/z_2\in\CDD,\\\DD\ni z_3&\longmapsto G(0,(1-a^2)z_3)/z_3\in\CDD,\end{align*} we get that $$|\al|(1-a)\le 1,\quad|\beta|(1-a^2)\le 1.$$ We can assume that either $B(\la)=\la^{m-3}$ or $B(\la)=\la^{m-3}m_\gamma(\la)$ for some $\gamma\in\DD$.

In the first case it follows that $\al a=1\ge|\al|(1-a)$, i.e. $2a\ge 1$. This is impossible. 

In the second one the following equations hold \begin{align*}-\gamma&=\alpha a,\\1-|\gamma|^2&=\begin{cases}\beta b+\delta a^2,&m=4,\\\beta b,&m\geq 5.\end{cases}\end{align*}

If $m\ge 5$, note that \begin{equation}\label{43}1=|\al|^2a^2+\beta b\le\frac{1}{(1-a)^2}a^2+\frac{1}{1-a^2}(1-4a^2),\end{equation} which reduces to $1\le 2a$, contradiction.

For $m=4$ let us come back to the function $$g(z_2)=\alpha(1-a)+\delta(1-a)^2z_2+\ldots$$ 

If $|\alpha(1-a)|=1$, then $g$ is constant, in particular $\delta=0$. Now use \eqref{43}.

Otherwise the function $h:=m_{\alpha(1-a)}\circ g:\DD\lon\DD$ satisfies $h(0)=0$. Hence $$1\geq|h'(0)|=\frac{|\delta|(1-a)^2}{1-|\alpha|^2(1-a)^2},\quad|\al|^2+|\delta|\leq\frac{1}{(1-a)^2}.$$ This gives $$1=|\al|^2a^2+\delta a^2+\beta b\le\frac{1}{(1-a)^2}a^2+\frac{1}{1-a^2}(1-4a^2),$$ i.e. $1\le 2a$.
\end{proof}

A description of 2-geodesics in $D$ (and in similar domains) may be found in \cite{v}.

\begin{prop}[cf. Remark \ref{ana}$(c)$]\label{40}
Let $m\ge 5$ and let positive numbers $a,b$ satisfy $2a^2+b=1$. Then the map $$f:\DD\lon\ee:=\{z\in\CC^3:|z_1|^2+|z_2|^2+|z_3|<1\},$$ $$f(\la):=(a\la,a\la^{m-2},b\la^{m-1}),$$ is an $m$-geodesic such that $\phi(\la):=f(\la)/\la$ is not an $(m-1)$-geodesic.
\end{prop}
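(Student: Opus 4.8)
The plan is to mimic the structure of the proofs of Propositions \ref 1 and \ref{ab} almost verbatim, since the domain $\ee$ is again of the form $\{(|z_1|^2+|z_2|^2)+|z_3|<1\}$ (balanced, $k=(1,1,1)$, convex and pseudoconvex) and the map $f$ has the same shape. First I would verify that $f$ is an $m$-geodesic by exhibiting an explicit $m$-left inverse: the polynomial $z\longmapsto 2z_1z_2+z_3$ should work, since $2(a\la)(a\la^{m-2})+b\la^{m-1}=(2a^2+b)\la^{m-1}=\la^{m-1}$, a Blaschke product of degree $m-1$. (One must check this functional maps $\ee$ into $\CDD$, which follows from $|2z_1z_2+z_3|\le|z_1|^2+|z_2|^2+|z_3|<1$ by the AM--GM inequality $2|z_1z_2|\le|z_1|^2+|z_2|^2$.)

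Next, to show $\phi(\la):=f(\la)/\la=(a,a\la^{m-3},b\la^{m-2})$ is not an $(m-1)$-geodesic, I would argue by contradiction: suppose $F\in\OO(\ee,\DD)$ with $F\circ\phi$ a non-constant Blaschke product $B$ of degree $\le m-2$. Because the first coordinate of $\phi$ is frozen at $a$, I restrict attention to $G(z_2,z_3):=F(a,z_2,z_3)$, holomorphic on the slice $\{|z_2|^2+|z_3|<1-a^2\}$, so that $G(a\la^{m-3},b\la^{m-2})=B(\la)$. After normalizing $G(0)=0$ and expanding $G(z_2,z_3)=\alpha z_2+\beta z_3+\delta z_2^2+\ldots$, I would extract the Schwarz-type modulus bounds on $\alpha,\beta$ by composing $G$ with the one-variable embeddings $z_2\mapsto G(rz_2,0)/z_2$ and $z_3\mapsto G(0,sz_3)/z_3$ for the appropriate radii $r=\sqrt{1-a^2}$, $s=1-a^2$, giving $|\alpha|\sqrt{1-a^2}\le1$ and $|\beta|(1-a^2)\le1$.

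Then I would split into the two possible forms of $B$. If $B(\la)=\la^{m-3}$, comparing coefficients forces $\alpha a=1$, clashing with the bound on $\alpha$; routine arithmetic should close this. If $B(\la)=\la^{m-3}m_\gamma(\la)$, the coefficient comparison yields $-\gamma=\alpha a$ and, for $m\ge5$, $1-|\gamma|^2=\beta b$, whence
\begin{equation*}
1=|\alpha|^2a^2+\beta b\le\frac{a^2}{1-a^2}+\frac{1-2a^2}{1-a^2},
\end{equation*}
which I expect to reduce to a contradiction after using $b=1-2a^2$. The main obstacle is pinning down the exact radii in the Schwarz bounds so that the final inequality genuinely closes: the slice domain is $\{|z_2|^2+|z_3|<1-a^2\}$, not a bidisc, so the natural embedded discs have radii governed by $\sqrt{1-a^2}$ (for $z_2$) and $1-a^2$ (for $z_3$), and I must confirm these are the sharp constants making the right-hand side collapse to something $<1$ or otherwise incompatible with equality. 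Since $m\ge5$ is assumed, the $\delta$-term does not intervene and no separate borderline ($|\alpha|\sqrt{1-a^2}=1$) analysis via the derivative of $m_{\alpha}\circ(\cdot)$ is needed, unlike the $m=4$ cases in the preceding propositions; this is presumably why the hypothesis is $m\ge5$ rather than $m\ge4$.
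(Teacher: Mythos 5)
Your $m$-left inverse and the case $B(\la)=\la^{m-3}$ are fine (there $|\alpha|=1/a>\sqrt2>1/\sqrt{1-a^2}$ because $2a^2<1$). But in the main case your argument has a genuine gap: the final inequality does not close. With your separate Schwarz bounds $|\alpha|\le(1-a^2)^{-1/2}$ and $\beta\le(1-a^2)^{-1}$, the coefficient identities $-\gamma=\alpha a$, $1-|\gamma|^2=\beta b$ give
$$1=|\alpha|^2a^2+\beta b\le\frac{a^2}{1-a^2}+\frac{1-2a^2}{1-a^2}=\frac{1-a^2}{1-a^2}=1,$$
i.e.\ exactly $1\le 1$ --- no contradiction. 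The obstacle you flagged is real: for this ellipsoid the one-variable radii are precisely borderline, unlike in Proposition \ref{ab}, where the slice $\{(a+|z_2|)^2+|z_3|<1\}$ makes the analogous sum reduce to the false inequality $1\le 2a$. Moreover, equality in your bounds is attained by admissible data (e.g.\ $G(z_2,0)=\alpha z_2$ with $|\alpha|=(1-a^2)^{-1/2}$ maps the slice disc into $\DD$), so no equality-case analysis of the two one-variable restrictions alone can rescue the argument.

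What is needed --- and what the paper's proof does --- is a \emph{joint} constraint on $(\alpha,\beta)$. The paper rescales the slice to the model domain, setting $G(z_2,z_3):=F(a,\sqrt{1-a^2}\,z_2,(1-a^2)z_3)$ on $\{|z_2|^2+|z_3|<1\}$, so that $G(c\la^{m-3},d\la^{m-2})=B(\la)$ with $c:=a/\sqrt{1-a^2}$, $d:=b/(1-a^2)$ satisfying $c^2+d=1$; the identities then read $\beta(1-c^2)+|\alpha|^2c^2=1$. Then, exactly as in the proof of Proposition \ref 1, balancedness is exploited: for fixed $z$ in the model domain the function $g_z(\la):=G(\la z)/\la$ has modulus $<1$ on $\TT$, whence $g_z(0)=\alpha z_2+\beta z_3\in\DD$ for \emph{all} $(z_2,z_3)$ with $|z_2|^2+|z_3|<1$. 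This gives $|\alpha|,|\beta|\le 1$, and equality in $\beta(1-c^2)+|\alpha|^2c^2\le(1-c^2)+c^2=1$ forces $\beta=1$ and $|\alpha|=1$; but $|\alpha|=|\beta|=1$ is impossible for such a linear functional (take $|z_2|=t$, $|z_3|$ close to $1-t^2$ with aligned phases to reach modulus near $t+1-t^2>1$ for $t\in(0,1)$). Your separate disc restrictions cannot detect this joint obstruction, so without the homogeneity step your proof stalls at $1\le 1$.
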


\begin{proof}
The polynomial $2z_1z_2+z_3$ is an $m$-left inverse of $f$. Assume that there is a~holomorphic function $F:\ee\lon\DD$ such that $$F(a,a\la^{m-3},b\la^{m-2})=B(\la),\quad\la\in\DD,$$ where $B$ is a non-constant Blaschke product of degree at most $m-2$. Consider the function $$G:\{(z_2,z_3)\in\CC^2:|z_2|^2+|z_3|<1\}\ni(z_2,z_3)\longmapsto F(a,\sqrt{1-a^2}z_2,(1-a^2)z_3)\in\DD.$$ Then $$G(c\la^{m-3},d\la^{m-2})=B(\la),\quad\la\in\DD,$$ for some positive numbers $c,d$ satisfying $c^2+d=1$, namely $$c:=\frac{a}{\sqrt{1-a^2}},\quad d:=\frac{b}{1-a^2}.$$ We may assume additionally that $G(0)=0$. Then $B(\la)=\la^{m-3}m_\gamma(\la)$ for some $\gamma\in\DD$ (with exactness up to a unimodular constant; the case $B(\la)=\la^{m-3}$ does not hold). Expanding $G(z_2,z_3)=\alpha z_2+\beta z_3+\ldots$, we get \begin{align*}\alpha c&=-\gamma,\\\beta d&=1-|\gamma|^2.\end{align*} Therefore, $\beta(1-c^2)=1-|\al|^2c^2$ or \begin{equation}\label{38}\beta(1-c^2)+|\al|^2c^2=1.\end{equation}

Proceeding as in the proof of Proposition \ref 1, we show that $\alpha z_2+\beta z_3\in\DD$ for any $z_2,z_3$ with $|z_2|^2+|z_3|<1$. In particular, $|\alpha|,|\beta|\leq 1$. It is obvious that it can not be $|\alpha|=|\beta|=1$, whence \eqref{38} fails.
\end{proof}

By the Lempert theorem, any weak 2-extremal of a convex domain is a 2-geodesic. For all $m$, one-dimensional counterexamples (Proposition \ref{27}) are easy to generalize. Namely, let $D\su\CC$ be a non-simply connected taut domain and let $f:\DD\lon D$ be a~weak $m$-extremal. Take a domain $G\su\CC^n$ and a map $g\in\OO(\DD,G)$ with $g(\DD)\su\su G$. Then $(f,g):\DD\lon D\times G$ is a weak $m$-extremal, but not an $m$-extremal (Lemma \ref{cart}). We are not able to decide whether such a situation is possible for $m\ge 3$ in a~convex domain (P\ref{33}).

We present a non-convex, but topologically contractible counterexample, which follows from the following theorem.

\begin{thm}[\cite{z}, Theorem 4.1.1]\label{41}
A complex ellipsoid $\ee(p)$ is convex if and only if $$\bs\ell_{\ee(p)}(\la_1a,\la_2a)=\bs p(\la_1,\la_2),\quad a\in\pa\ee(p),\ \la_1,\la_2\in\DD.$$
\end{thm}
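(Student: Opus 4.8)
The plan is to treat the two implications separately, after recording the trivial inequality that holds in both. Writing $h$ for the Minkowski (gauge) function of the balanced domain $\ee(p)$, so that $\ee(p)=\{h<1\}$ and $h(\la a)=|\la|h(a)$, the radial disc $g_a(\la):=\la a$ maps $\DD$ into $\ee(p)$ whenever $a\in\pa\ee(p)$ (indeed $h(\la a)=|\la|<1$), and $g_a(\la_j)=\la_j a$; hence $g_a$ is admissible in the infimum defining the Lempert function and $\bs\ell_{\ee(p)}(\la_1 a,\la_2 a)\le\bs p(\la_1,\la_2)$ always. Since $\bs c_D\le\bs\ell_D$ holds for every domain (for $F\in\OO(D,\DD)$ and any competitor $f$ one has $\bs p(F(z),F(w))\le\bs p(\la_1,\la_2)$ by the Schwarz--Pick lemma applied to $F\circ f$), the whole statement reduces to supplying the missing lower bounds.

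For the implication that convexity forces the equality I would produce a linear left inverse. If $\ee(p)$ is convex then $h$ is a norm and $\ee(p)$ is its open unit ball; by the complex Hahn--Banach theorem, given $a\in\pa\ee(p)$ (so $h(a)=1$) there is a $\CC$-linear functional $L$ with $L(a)=1$ and $|L(z)|\le h(z)$ for all $z$. Then $L\in\OO(\ee(p),\DD)$ and $L(\la a)=\la$, so $\bs c_{\ee(p)}(\la_1 a,\la_2 a)\ge\bs p(L(\la_1 a),L(\la_2 a))=\bs p(\la_1,\la_2)$; together with $\bs c\le\bs\ell\le\bs p$ along $g_a$ this forces all of them to coincide. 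Alternatively one may invoke Lemma \ref{11}: the map $g_a$ is a weak $2$-extremal, hence by the Lempert theorem a $2$-geodesic, which is exactly the existence of such a left inverse.

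For the converse I would argue by contraposition, assuming $\ee(p)$ is not convex --- equivalently (for $n\ge2$) that $p_{j_0}<1/2$ for some index $j_0$ --- and exhibit data violating the equality. I would first pass to an infinitesimal reduction: differentiating the identity $\bs\ell_{\ee(p)}(\la_1 a,\la_2 a)=\bs p(\la_1,\la_2)$ at $\la_1=\la_2=\la_0$ (the Lempert function differentiates to the Kobayashi--Royden metric $\bs\kappa$) shows that the hypothesis implies $$\bs\kappa_{\ee(p)}(z;z)=\frac{h(z)}{1-h(z)^2},\quad z\in\ee(p),$$ where, as before, the radial disc always yields the upper bound $\le$. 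It therefore suffices to find a single point $z$, having at least two non-vanishing coordinates one of which is the $j_0$-th, at which a non-radial analytic disc tangent to the radial direction beats the radial one, i.e.\ $\bs\kappa_{\ee(p)}(z;z)<h(z)/(1-h(z)^2)$. This is where $p_{j_0}<1/2$ must be used: there the defining function of $\ee(p)$ fails to be convex in the directions mixing $z_{j_0}$ with the other non-vanishing coordinate, leaving room to bend a competing disc into the domain and so to increase its speed.

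The main obstacle is precisely this construction of a faster, non-radial disc together with the verification that it stays inside $\ee(p)$, which amounts to a boundary estimate $|\varphi_1(\la)|^{2p_1}+\ldots+|\varphi_n(\la)|^{2p_n}\le 1$ on $\TT$. I would organize it either by hand, with an explicit ansatz in which the $j_0$-th component carries a Blaschke-type factor raised to the power $1/(2p_{j_0})>1$ (the remaining components being mild perturbations of the radial ones), or by quoting the description of complex geodesics and of $\bs\kappa_{\ee(p)}$ in complex ellipsoids (cf.\ \cite{e} and \cite[Chapter 8]{jp}), from which one reads off that the genuine extremal joining two off-centre points of the ray $\CC a$ is non-radial exactly when some $p_j<1/2$. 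Turning the resulting qualitative statement into the strict inequality above is the technical heart of the proof.
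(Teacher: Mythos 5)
You should first note a point of comparison: the paper does not prove this theorem at all --- it is imported verbatim from Zwonek's dissertation (\cite{z}, Theorem 4.1.1) --- so your proposal can only be measured against that external source, not against an internal argument. Your forward implication is complete and correct, and pleasantly elementary: since $\ee(p)$ is balanced, convexity makes its Minkowski functional $h$ a norm, the complex Hahn--Banach theorem yields a $\CC$-linear $L$ with $L(a)=1$ and $|L|\le h$, and then $L$ is a $2$-left inverse of the radial disc, giving $\bs p(\la_1,\la_2)=\bs p(L(\la_1a),L(\la_2a))\le\bs c_{\ee(p)}(\la_1a,\la_2a)\le\bs\ell_{\ee(p)}(\la_1a,\la_2a)\le\bs p(\la_1,\la_2)$; the alternative route through Lemma \ref{11} and the Lempert theorem is also valid.

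The converse, however, is where the theorem actually lives, and there your proposal has a genuine gap. The infinitesimal reduction is legitimate (though you should justify differentiating $\bs\ell$ to $\bs\kappa$: $\ee(p)$ is bounded and hyperconvex, hence taut, which is what makes that limit identity available), but it only reformulates the problem; the construction of a non-radial disc strictly beating the radial one when some $p_{j_0}<1/2$ is announced, not performed. Worse, the fallback you offer --- reading non-extremality of the radial disc off the necessary form \eqref{22} --- cannot work in principle: the radial disc $\la\longmapsto\la a$ is \emph{always} of the form \eqref{22} with $m=2$ (take $\al_{1j}=\al_{10}=0$, $r_{1j}=1$ for all $j$; the accompanying polynomial identity then reduces exactly to $\sum_{j=1}^n|a_j|^{2p_j}=1$, i.e. $a\in\pa\ee(p)$), independently of convexity, so no necessary-form theorem of Edigarian type can detect the failure of extremality in the non-convex case. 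What is required is an explicit competitor: an interpolating map $h\in\OO(\CDD,\ee(p))$ through two points of the ray (equivalently, by Lemma \ref{28}$(a)$, a $g$ with $g(\DD)\su\su\ee(p)$), together with the boundary estimate $\sum_j|h_j|^{2p_j}\le 1$ on $\TT$ exploiting $1/(2p_{j_0})>1$. That construction is precisely the computational heart of Zwonek's proof, and deferring it --- or citing literature whose natural source is the very theorem being proved --- leaves the `only if' direction unestablished.
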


\begin{cor}\label{nc}
Let $\ee(p)$ be non-convex. Then there exists $a\in\pa\ee(p)$ such that for any Blaschke product $B$ of degree $m-1$, having all zeros different, the mapping $Ba:\DD\lon\ee(p)$ is a weak $m$-extremal, but not an $m$-extremal.
\end{cor}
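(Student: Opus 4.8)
The plan is to treat the two assertions separately: that $Ba$ is always a weak $m$-extremal (for every $a\in\pa\ee(p)$), and that for a suitable $a$ it fails to be an $m$-extremal. The point $a$ will be exactly one produced by the non-convexity of $\ee(p)$ through Theorem \ref{41}.

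For weak $m$-extremality I would first strip off the unimodular factor: writing $B=\zeta B_0$ with $B_0=\prod_{j=1}^{m-1}m_{\al_j}$ and noting $Ba=B_0(\zeta a)$ with $\zeta a\in\pa\ee(p)$, it suffices to show $B_0a$ is a weak $m$-extremal for an arbitrary boundary point. This I would prove by induction on the degree $d=m-1$. Since $\ee(p)$ is balanced ($k=(1,\ldots,1)$), the base case $d=1$ is Lemma \ref{11} applied to the constant fibre $\phi\equiv a$: the map $m_{\al_1}a$ is a weak $2$-extremal for $\al_1$ and any $\mu\in\DD\sm\{\al_1\}$. For the step, assuming $B_0'a$ with $B_0'=\prod_{j=1}^{d-1}m_{\al_j}$ is a weak $d$-extremal for $\al_1,\ldots,\al_{d-1},\mu$ (with $\mu\notin\{\al_1,\ldots,\al_d\}$), I would invoke Lemma \ref{10}$(b)$ with new node $\la_{m+1}=\al_d$ and $l=1$; as all $k_j=1$ the resulting map is $m_{\al_d}B_0'a=B_0a$, which is then a weak $(d+1)$-extremal for $\al_1,\ldots,\al_{d-1},\mu,\al_d$. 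Distinctness of the zeros gives $\al_d\neq\al_1,\ldots,\al_{d-1}$, and $\mu$ is chosen to avoid all zeros.

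For the failure of $m$-extremality, Theorem \ref{41} provides, from non-convexity, a point $a\in\pa\ee(p)$ and distinct $\la_1',\la_2'\in\DD$ with $\bs\ell_{\ee(p)}(\la_1'a,\la_2'a)<\bs p(\la_1',\la_2')$ (the inequality $\le$ being automatic from the competitor $\la\mapsto\la a$). This says precisely that $\la\mapsto\la a$ is not a Lempert extremal, i.e. not a weak $2$-extremal for $\la_1',\la_2'$, so by the very definition there is $g\in\OO(\CDD,\ee(p))$ with $g(\la_1')=\la_1'a$ and $g(\la_2')=\la_2'a$. For any admissible $B$ I would then set $h:=g\circ B$; since $B$ is holomorphic on a neighbourhood of $\CDD$ with $B(\CDD)=\CDD$ and $g$ is defined on a neighbourhood of $\CDD$, we get $h\in\OO(\CDD,\ee(p))$. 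The crux is to produce $m$ distinct nodes $\beta_1,\ldots,\beta_m$ with $B(\beta_i)\in\{\la_1',\la_2'\}$; for these $h(\beta_i)=g(B(\beta_i))=B(\beta_i)a=(Ba)(\beta_i)$, so $h$ interpolates $Ba$ at the $\beta_i$ and $Ba$ is not a weak $m$-extremal for them, hence not an $m$-extremal.

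The one genuinely quantitative step, and the point I expect to require the most care, is producing these $m$ distinct nodes. A finite Blaschke product of degree $d=m-1$ is $d$-to-$1$ onto $\DD$ and has exactly $d-1$ critical points in $\DD$ (counted with multiplicity). Hence, if $j_i$ is the number of distinct points of $B^{-1}(\la_i')$, the ramification over $\la_1'$ and $\la_2'$ obeys $(d-j_1)+(d-j_2)\le d-1$, so $j_1+j_2\ge d+1=m$; since $B^{-1}(\la_1')$ and $B^{-1}(\la_2')$ are disjoint ($\la_1'\neq\la_2'$), this yields at least $m$ distinct admissible nodes. The remaining routine point is that $g\circ B$ genuinely lies in $\OO(\CDD,\ee(p))$, i.e. that $g$ is evaluated only on the compact set $B(\CDD)=\CDD$ contained in its domain, which is immediate.
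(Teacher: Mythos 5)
Your proof is correct; the weak-extremality half is exactly the paper's argument, while the non-extremality half takes a genuinely different route. For weak $m$-extremality you do what the paper does: Lemma \ref{11} with the constant map $\phi\equiv a$ (and general $\al$) as the base case, then Lemma \ref{10}$(b)$ applied inductively, one zero of $B$ at a time, with the unimodular factor absorbed into the boundary point; the distinctness of the zeros enters precisely here, to make each new node admissible. For the failure of $m$-extremality the paper argues top-down: assuming $Ba$ were an $m$-extremal, it peels off M\"obius factors via Corollary \ref{17} (each intermediate map, a boundary point times a lower-degree Blaschke product, has image inside $\ee(p)$), concluding that $\la\longmapsto\la a$ would be a $2$-extremal and contradicting the choice of $a$ from Theorem \ref{41}. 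You instead argue bottom-up: you extract from Theorem \ref{41} an interpolating competitor $g\in\OO(\CDD,\ee(p))$ for $\la\longmapsto\la a$ at the bad pair $\la_1',\la_2'$, push it forward to $h:=g\circ B\in\OO(\CDD,\ee(p))$, and locate $m$ distinct nodes in $B^{-1}(\{\la_1',\la_2'\})\su\DD$ by the critical-point count: the derivative of a degree-$(m-1)$ Blaschke product has exactly $m-2$ zeros in $\DD$, the ramification over the two distinct values is supported on disjoint sets, so $j_1+j_2\ge m$ as you compute; this Riemann--Hurwitz bookkeeping is correct. What each approach buys: the paper's route is shorter once Corollary \ref{17} is in hand and requires no fibre counting; yours is more explicit and self-contained, exhibiting both the competitor map and concrete nodes at which weak $m$-extremality fails, and, like the paper's, it never uses distinctness of the zeros in this half. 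The one step you assert rather than prove --- that the strict inequality $\bs\ell_{\ee(p)}(\la_1'a,\la_2'a)<\bs p(\la_1',\la_2')$ gives failure of weak $2$-extremality at that very pair --- needs a short reparametrization (a Schwarz--Pick interpolant through a slightly smaller disc, so the competitor has relatively compact image, plus Lemma \ref{28}$(a)$); since the paper invokes the same standard equivalence with the same silence, this is a remark on presentation, not a gap.
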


\begin{proof}
By Lemma \ref{11}, for any $a\in\pa D$ the map $f_a(\la):=\la a$ is a weak 2-extremal for 0 and $\mu\in\DD_*$, so we get weak $m$-extremality of $Ba$ thanks to Lemma \ref{10}$(b)$. On the other side, from Theorem \ref{41} it follows that there exists $a\in\pa\ee(p)$ such that $f_a$ is not a 2-extremal. Therefore, if $Ba$ were an $m$-extremal, making use of Corollary \ref{17} we would get the opposite statement.
\end{proof}

\bigskip

A. Edigarian \cite{e} gave a powerful tool for studying extremal problems of type $(\mathcal P_m)$. First, the author introduced a problem $(\mathcal P)$. Let $D\su\CC^n$ be a bounded domain. A holomorphic mapping $f:\DD\lon D$ is called an \emph{extremal} for $(\mathcal P)$, if $\Phi_j(f)=a_j\in\RR$, $j=1,\ldots,N$, and there is no $h\in\OO(\DD,D)$ such that $\Phi_j(h)=a_j$, $j=1,\ldots,N$, and $h(\DD)\su\su D$; $\Phi_1,\ldots,\Phi_N$ are some functionals. The mappings $g\longmapsto\re g(\la_j)$ and $g\longmapsto\im g(\la_j)$, $j=1,\ldots,m$, for some distinct $\la_1,\ldots,\la_m\in\DD$ ($N=2m$), are model examples of such functionals. In that case it is natural, due to the Cauchy formula, to count how many $\la_j$'s are different from 0. This number is specified by writing $(\mathcal P_{m-1})$ or $(\mathcal P_m)$ (it may be defined for other problems $(\mathcal P)$). We have the following relationship with weak $m$-extremals.

\begin{rem}[cf. \cite{e}, Lemma 20 and \cite{jp}, Remark 11.4.4]
Let $D\su\CC^n$ be a~bounded domain. Then a holomorphic map $f:\DD\lon D$ is a weak $m$-extremal for $m$ non-zero points if and only if it is an extremal for model $(\mathcal P_m)$. Otherwise, if one of $m$ points is 0, we have equivalently an extremal for model $(\mathcal P_{m-1})$.
\end{rem}

A theorem of A. Edigarian delivers a necessary form of extremals $f:\DD\lon\ee(p)$ for $(\mathcal P_{m-1})$ (for convenience we write $m-1$ instead of $m$ and change the formulation). 

\begin{thm}[\cite{e}, Theorem 4]\label{edi}
Let $f:\DD\lon\ee(p)$ be an extremal for $(\mathcal P_{m-1})$ such that $f_j\not\equiv 0$, $j=1,\ldots,n$. Then \begin{equation}\label{22}f_j(\lambda)=a_j\prod_{k=1}^{m-1}\left(\frac{\lambda-\alpha_{kj}}{1-\ov\alpha_{kj}\lambda}\right)^{r_{kj}}
\left(\frac{1-\ov\alpha_{kj}\lambda}{1-\ov\alpha_{k0}\lambda}\right)^{1/p_j},\quad j=1,\ldots,n,\end{equation} where \begin{align*}
&a_j\in\CC_*,\quad\alpha_{kj}\in\CDD,\quad\alpha_{k0}\in\DD,\quad r_{kj}\in\{0,1\},\\
&\sum_{j=1}^n|a_j|^{2p_j}\prod_{k=1}^{m-1}(\la-\alpha_{kj})(1-\ov\alpha_{kj}\la)=\prod_{k=1}^{m-1}(\la-\alpha_{k0})(1-\ov\alpha_{k0}\la),\quad\la\in\CC,\\
&f\neq const.\end{align*}
\end{thm}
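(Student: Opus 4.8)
The plan is to treat this as an extremal problem with finitely many constraints and to read off the form of $f$ from an Euler--Lagrange (Lagrange-multiplier) analysis, in the spirit of Poletsky's and Edigarian's functional approach. Write $\rho(z):=\sum_{j=1}^n|z_j|^{2p_j}$, so that $\ee(p)=\{\rho<1\}$ and the complex gradient has entries $\pa\rho/\pa\ov z_j=p_j z_j|z_j|^{2p_j-2}$. First I would fix the boundary behaviour: an extremal for model $(\mathcal P_{m-1})$ exists by a normal-families argument, and it satisfies $\rho(f)=1$ a.e. on $\TT$. Indeed, were $\rho(f)<1$ on a set of positive measure, one could contract $f$ slightly and reinterpolate the finitely many data with the Lagrange polynomial $P_w$ from the proof of Lemma \ref{28} to produce a competitor $h$ with $h(\DD)\su\su\ee(p)$, contradicting extremality. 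This boundary identity $\sum_j|f_j|^{2p_j}=1$ on $\TT$ is, after clearing denominators, exactly the constraint equation displayed in the theorem, so the whole problem reduces to showing that $f$ has the claimed factored shape.

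The heart of the argument is the production of a multiplier. The admissible holomorphic variations $f+t\eta$ that preserve the finitely many prescribed interpolation data to first order form a subspace of finite codimension in a suitable Hardy space, and extremality forces $f$ to be stationary for the induced variational problem. By a duality/Hahn--Banach argument I would encode this stationarity as the existence of a nontrivial rational function $q$, shared by all coordinates and of degree at most $m-1$ (the bound dictated by the number $m-1$ of active, i.e. nonzero, interpolation nodes), together with functions $g_j$ holomorphic and zero-free across $\TT$, such that on $\TT$ one has a relation of the schematic form
$$p_j\,\ov{f_j(\la)}\,|f_j(\la)|^{2p_j-2}=\ov\la\,q(\la)\,\ov{g_j(\la)},\quad j=1,\ldots,n.$$
I expect this to be the main obstacle: constructing the multiplier rigorously, proving $q\not\equiv0$, and extracting the sharp degree bound all require the careful boundary-regularity and duality estimates that form the technical core of Edigarian's work.

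Granting the multiplier relation, the form \eqref{22} should follow by solving the resulting functional equations together with $\rho(f)=1$ on $\TT$. From these one reads off that each $|f_j(\la)|^{2p_j}$ (nontrivial, since $f_j\not\equiv0$) is the boundary trace of a rational function of $\la$ of degree $\le m-1$ whose numerator and denominator split into M\"obius blocks $(\la-\al)(1-\ov\al\la)$; taking $p_j$-th roots of these modular identities yields the factored expression, the integer exponents $r_{kj}\in\{0,1\}$ recording the genuine zeros of $f_j$ in $\DD$ and the common denominator factors $(1-\ov\al_{k0}\la)^{1/p_j}$ reflecting that the poles of the single multiplier $q$ are the same for every coordinate (hence the index $k0$ is independent of $j$). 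A short monodromy check shows that the fractional powers assemble into single-valued holomorphic functions on $\DD$ once exponents and factors balance as in the constraint equation. Finally, using the identity $|1-\ov\al\la|^2=(\la-\al)(1-\ov\al\la)/\la$ on $\TT$, the boundary condition $\sum_j|f_j|^{2p_j}=1$ rewrites precisely as
$$\sum_{j=1}^n|a_j|^{2p_j}\prod_{k=1}^{m-1}(\la-\al_{kj})(1-\ov\al_{kj}\la)=\prod_{k=1}^{m-1}(\la-\al_{k0})(1-\ov\al_{k0}\la),$$
which is the stated normalization and completes the derivation.
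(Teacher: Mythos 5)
There is no proof in the paper to compare against: the statement is quoted verbatim from Edigarian's paper \cite{e} (Theorem 4) as an external tool, so the only question is whether your blind sketch would constitute a proof. It would not, for two concrete reasons. First, your preliminary step is misjustified: the argument ``were $\rho(f)<1$ on a set of positive measure, one could contract $f$ slightly and reinterpolate with $P_w$'' proves too much. For any $t<1$ the map $f(t\cdot)$ automatically has relatively compact image and $w_j=f(\la_j)-f(t\la_j)\to 0$ as $t\to 1$, so if this reasoning were valid it would produce a competitor $h$ with $h(\DD)\su\su\ee(p)$ for \emph{every} $f$, showing that no extremals exist at all. The actual obstruction is that $\sup_\DD\rho(f(t\cdot))\to 1$ as $t\to 1$, and the competition between this blow-up and the smallness of $\|P_w\|$ is exactly what the quantitative almost-properness arguments (the Hopf-lemma estimate $\dist(f(\la),\pa D)\ge\gamma(1-|\la|)$ and the weakly Runge machinery reproduced in the paper's boundary-properties section, following \cite{ek}) are needed for; your sketch contains none of that, and in Edigarian's own proof the boundary behaviour comes out of Poletsky's variational characterization rather than being an input.

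Second, and decisively, the multiplier relation on $\TT$ --- a single rational $q$ of degree $\le m-1$, shared by all coordinates, with zero-free outer cofactors $g_j$ --- is the entire mathematical content of the theorem, and you assert it via an unspecified ``duality/Hahn--Banach argument'' while acknowledging it as the main obstacle. Establishing existence of the multiplier, proving $q\not\equiv 0$, obtaining the degree bound from the $m-1$ nodes, and showing the denominator roots $\al_{k0}$ are common to all $j$ are precisely the Poletsky--Edigarian Euler--Lagrange analysis; nothing in the sketch reduces them to checkable steps. The final passage is also thinner than it looks: going from the modulus identity $\sum_j|f_j|^{2p_j}=1$ on $\TT$ to the pointwise factorization \eqref{22}, with $r_{kj}\in\{0,1\}$ and single-valued $p_j$-th roots, requires an inner--outer factorization of each $f_j$, finiteness of its Blaschke part, and exclusion of a singular inner factor --- your ``short monodromy check'' presupposes all of this. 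The one piece that is genuinely complete is the last computation: on $\TT$ one has $|1-\ov\al\la|^2=(\la-\al)(1-\ov\al\la)/\la$, so the boundary identity becomes an equality of polynomials of degree $2(m-1)$ on $\TT$, hence on all of $\CC$, which is the displayed constraint. As a reconstruction of the \emph{strategy} of \cite{e} your outline is faithful, but as a proof it leaves the core unproven and its first step, as stated, is false.
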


\begin{rem}
Let $f$ be of the form \eqref{22}.
\begin{enumerate}[$(a)$]
\item We omit the condition $r_{kj}=1\Longrightarrow\alpha_{kj}\in\DD$ from the paper of A. Edigarian. It has no matter for our consideration, since for $\al_{kj}\in\TT$ the function $m_{\alpha_{kj}}$ extends as a~unimodular constant.
\item Originally, there is no condition $\al_{k0}\in\DD$, but $\al_{k0}\in\CDD$. However, if $\al_{\wi k0}\in\TT$ for some $\wi k$, then from the equality $$\sum_{j=1}^n|a_j|^{2p_j}\prod_{k=1}^{m-1}|\la-\alpha_{kj}|^2=\prod_{k=1}^{m-1}|\la-\alpha_{k0}|^2,\quad\la\in\TT,$$ we deduce that for any $j=1,\ldots,n$ there exists $k_j\in\{1,\ldots,{m-1}\}$ such that $\al_{k_jj}=\al_{\wi k0}$. Then the corresponding factor for $k_j$ and $j$ in \eqref{22} is a unimodular constant. We redefine $\al_{k_jj}$ and $\al_{\wi k0}$ to be the same element of $\DD$ (or remove) and repeat the procedure if needed.
\item The map $f$ extends continuously to $\CDD$. In particular, $f$ is proper.
\end{enumerate} 
\end{rem}

\begin{prop}[\cite{jp}, Proposition 16.2.2, \cite{jpz}]
Let $\ee(p)$ be convex and let $f:\DD\lon\ee(p)$ be a holomorphic mapping. Then, if $f_j\not\equiv 0$, $j=1,\ldots,n$, it follows that $f$ is a $2$-extremal $($i.e. a $2$-geodesic$)$ if and only if it is of the form \eqref{22} with $m=2$.
\end{prop}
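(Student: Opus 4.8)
The plan is to prove the two implications separately, using the fact that convexity of $\ee(p)$ collapses the three notions into one: by the Lempert theorem recalled in the introduction, any weak $2$-extremal of a convex domain is a $2$-geodesic, hence also a $2$-extremal, so in $\ee(p)$ being a $2$-extremal, a weak $2$-extremal and a $2$-geodesic are the same thing. It therefore suffices to characterise $2$-extremals. For the \emph{necessity} (a $2$-extremal has the form \eqref{22} with $m=2$), I would argue as follows. If $f$ is a $2$-extremal with $f_j\not\equiv0$ for every $j$, then in particular $f$ is a weak $2$-extremal for the pair $0,\mu$ with $\mu\in\DD_*$. Since $\ee(p)$ is bounded, the remark identifying weak $m$-extremals with extremals for the model problems $(\mathcal P)$ shows that $f$ is an extremal for $(\mathcal P_1)$ (one of the two interpolation nodes being $0$). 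Because $f_j\not\equiv0$ for all $j$, Theorem \ref{edi} applies with $m=2$ (so that it describes extremals for $(\mathcal P_1)$) and gives exactly the representation \eqref{22} with $m=2$. Convexity plays no role here, so this direction holds in every complex ellipsoid.

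For the \emph{sufficiency}, suppose $f$ is of the form \eqref{22} with $m=2$; write $\al_j:=\al_{1j}$, $\al_0:=\al_{10}$, $r_j:=r_{1j}$. First I would verify that $f$ is proper. On $\TT$ the Blaschke factors $\frac{\la-\al_j}{1-\ov\al_j\la}$ are unimodular and $|1-\ov\al_j\la|=|\la-\al_j|$, so that $$|f_j(\la)|^{2p_j}=|a_j|^{2p_j}\frac{|\la-\al_j|^2}{|\la-\al_0|^2},\quad\la\in\TT.$$ Evaluating the side condition of Theorem \ref{edi} on $\TT$ gives $\sum_j|a_j|^{2p_j}|\la-\al_j|^2=|\la-\al_0|^2$, whence $\sum_j|f_j(\la)|^{2p_j}=1$ on $\TT$; thus $f(\TT)\su\pa\ee(p)$ and $f$, which extends continuously to $\CDD$, is proper. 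To upgrade properness to $2$-geodesity I would invoke Lempert's dual-map characterisation of complex geodesics of the convex domain $\ee(p)$: it is enough to produce $\rho\in\OO(\DD,\CC^n)$, continuous and nonvanishing on $\CDD$, such that for $\la\in\TT$ the vector $\ov\la\rho(\la)$ is a positive multiple of the complex normal $(p_j|f_j(\la)|^{2p_j-2}\ov{f_j(\la)})_{j=1}^n$ to $\pa\ee(p)$ at $f(\la)$; Lempert's criterion then yields $F\in\OO(\ee(p),\DD)$ with $F\circ f$ a Möbius function, i.e. a $2$-left inverse. A direct computation, using $|1-\ov\al_j\la|^2=(1-\ov\al_j\la)\ov\la(\la-\al_j)$ and $|1-\ov\al_0\la|=|\la-\al_0|$ on $\TT$ and scaling the normal by the common positive factor $|\la-\al_0|^2$, identifies the candidate $$\rho_j(\la)=p_j\ov a_j|a_j|^{2(p_j-1)}(1-\ov\al_j\la)^{1+r_j-1/p_j}(\la-\al_j)^{1-r_j}(1-\ov\al_0\la)^{1/p_j},$$ the branches of the fractional powers being taken on $\CDD$, where $1-\ov\al_0\la\neq0$.

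The main obstacle is precisely the verification that this $\rho$ is holomorphic and nonvanishing up to $\TT$ and that the induced $F$ sends $\ee(p)$ into $\DD$. The delicate configurations are the boundary nodes $\al_j\in\TT$ together with $r_j=0$, and the non-smooth convex range $1/2\le p_j<1$, where the exponent $1+r_j-1/p_j$ may be negative and the normal degenerates at the points of $\pa\ee(p)$ at which a coordinate vanishes. Controlling these exponents requires the side condition of Theorem \ref{edi}, and the bound $|F|<1$ on $\ee(p)$ rests on convexity ($p_j\ge1/2$). These estimates are exactly the content of \cite[Proposition 16.2.2]{jp} and \cite{jpz}, to which I would appeal rather than reproduce them; the genuinely new input in the present setting is the short necessity argument via Theorem \ref{edi}.
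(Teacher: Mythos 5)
The paper contains no proof of this proposition at all: it is imported verbatim from \cite{jp} (Proposition 16.2.2) and \cite{jpz}, so the only meaningful comparison is with those sources and with how the paper uses its own tools elsewhere. Measured that way, your reconstruction is sound and well aligned. Your necessity argument is correct and is essentially the same extraction the author performs inside the proof of Proposition \ref{31}: a $2$-extremal is in particular a weak $2$-extremal for a pair containing $0$, hence by the remark on model problems an extremal for $(\mathcal P_1)$, and Theorem \ref{edi} with $m=2$ gives \eqref{22}; you are also right that convexity plays no role there, consistently with the fact that Theorem \ref{edi} as stated carries no convexity hypothesis. Your dual-map computation checks out as well: on $\TT$ one has $|\la-\al_j|^2=\ov\la(\la-\al_j)(1-\ov\al_j\la)$, and collecting exponents after multiplying the normal by $|\la-\al_0|^2$ yields exactly your $\rho_j$, which is the dual map appearing in \cite{jpz}. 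The one point you should state more candidly is that your sufficiency direction is a reduction, not a proof: Theorem \ref{edi} delivers only the necessary form --- the paper explicitly records as open, in (P\ref{35}), whether maps of the form \eqref{22} are in general even weak $l$-extremals --- so the implication ``form \eqref{22} with $m=2$ implies $2$-geodesic'' carries all the analytic content (holomorphy, integrability and nonvanishing of $\rho$ under the side condition, the boundary zeros $\al_j\in\TT$ with $r_j=0$, the degenerate normals when $1/2\le p_j<1$), and you discharge precisely this by citing \cite[Proposition 16.2.2]{jp}, i.e.\ the statement being proved. Since the paper does exactly the same --- it offers the citation and nothing else --- this is an acceptable and even more detailed account than the paper's, but the appeal to \cite{jpz} should be presented as the proof of sufficiency rather than as a verification of estimates you have already established.
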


It is not known in a general situation whether mappings given by \eqref{22} are even some weak $l$-extremals (P\ref{35}). Our aim is to present solutions of particular cases.

We have new examples of convex domains, in which weak $m$-extremality implies $m$-extremality. Define the set $\sss_n\su[1/2,\infty)^n$ as follows:
\begin{itemize}
\item $(1/2,\ldots,1/2)\in\sss_n$,
\item $p\in\sss_n$, $c\ge 1\Longrightarrow cp\in\sss_n$,
\item $p\in\sss_n$ $\Longrightarrow$ $(p_1,\ldots,p_{j-1},1,p_{j+1},\ldots,p_n)\in\sss_n$ for any $j$.
\end{itemize} 

\begin{lem}
$(p_0,\ldots,p_0)\in\sss_n$ for $p_0\ge 1/2$ and $[1,\infty)^n\su\sss_n$.
\end{lem}

\begin{proof}
The first claim is obvious. For the second one assume that $1\le p_1\le\ldots\le p_n$ and put $c_1=p_1$, $c_j=p_j/p_{j-1}$, $j\ge 2$. Then the following sequences belong to $\sss_n$: 
$(c_n,\ldots,c_n)$,\\
$(1,\ldots,1,c_n)$,\\
$(c_{n-1},\ldots,c_{n-1},c_{n-1}c_n)$,\\
$(1,\ldots,1,c_{n-1},c_{n-1}c_n)$,\\
$(c_{n-2},\ldots,c_{n-2},c_{n-2}c_{n-1},c_{n-2}c_{n-1}c_n)$,\\
$(1,\ldots,1,c_{n-2},c_{n-2}c_{n-1},c_{n-2}c_{n-1}c_n)$,\\
$\vdots$\\
$(1,1,c_3,c_3c_4,\ldots,c_3\ldots c_{n-1},c_3\ldots c_n)$,\\
$(c_2,c_2,c_2c_3,c_2c_3c_4,\ldots,c_2c_3\ldots c_{n-1},c_2c_3\ldots c_n)$,\\
$(1,c_2,c_2c_3,c_2c_3c_4,\ldots,c_2c_3\ldots c_{n-1},c_2c_3\ldots c_n)$,\\
$(c_1,c_1c_2,c_1c_2c_3,\ldots,c_1c_2c_3\ldots c_{n-1},c_1c_2c_3\ldots c_n)=p$.
\end{proof}

\begin{prop}\label{31}
Let $p\in\sss_n$ and let $f:\DD\lon\ee(p)$ be a holomorphic map. Then
\begin{enumerate}[$(a)$]
\item $f$ is a weak $m$-extremal if and only if it is an $m$-extremal.
\item if $f_j\not\equiv 0$, $j=1,\ldots,n$, it follows that $f$ is an $m$-extremal if and only if it is of the form \eqref{22}.
\end{enumerate}

In particular, $(a)$ and $(b)$ hold in any symmetric convex and in any $\cC^2$-smooth complex ellipsoid.
\end{prop}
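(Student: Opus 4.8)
The plan is to show that all three properties coincide with Edigarian's normal form \eqref{22}, and that the whole difficulty concentrates in proving that, for $p\in\sss_n$, \emph{every} map of the form \eqref{22} is already an $m$-extremal. Note first that $m$-extremality trivially implies weak $m$-extremality. Next, the implication ``$m$-extremal $\Rightarrow$ \eqref{22}'' in $(b)$ is exactly Theorem \ref{edi}: since all the notions are invariant under precomposition with $\aut(\DD)$, we may move one interpolation node to $0$, so that a weak $m$-extremal of the bounded domain $\ee(p)$ becomes an extremal for the model problem $(\mathcal P_{m-1})$, and Theorem \ref{edi} supplies \eqref{22} (the form being itself $\aut(\DD)$-invariant). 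Hence both parts follow from the single sufficiency statement
$$ (\star)\qquad p\in\sss_n\ \txn{ and }\ f\txn{ of the form }\eqref{22}\ \Longrightarrow\ f\txn{ is an }m\txn{-extremal,} $$
because then weak $m$-extremal $\Rightarrow$ \eqref{22} $\Rightarrow$ $m$-extremal closes every implication. Before attacking $(\star)$ I would remove the degenerate coordinates: if $f_j\equiv 0$ then $f$ maps into the slice $\ee(p)\cap\{z_j=0\}$, a copy of $\ee(\wh p)$ with $\wh p$ obtained by deleting the $j$-th entry; truncating and re-inserting a zero coordinate shows that $f$ is a weak (resp. an) $m$-extremal of $\ee(p)$ iff the truncation is one of $\ee(\wh p)$. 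The normal form $\sss_n=\bigcup_{1\le b_1,\ldots,b_{n-1}\le b_n}\{b_1,\ldots,b_{n-1},b_n/2\}^n$ is clearly stable under deleting a coordinate, so $\wh p\in\sss_{n-1}$ and an induction on $n$ reduces everything to the case $f_j\not\equiv 0$ covered by $(\star)$.

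For $(\star)$ the plan is an induction following the generation of $\sss_n$ from $(1/2,\ldots,1/2)$ by the two operations $p\mapsto cp$ ($c\ge 1$) and $p\mapsto q$ with $q_j\in\{1,p_1,\ldots,p_n\}$, the level $m$ being kept fixed. The construction base $(1/2,\ldots,1/2)$ I would reach from the ball $\BB_n=\ee(1,\ldots,1)$ through the proper map $\Phi(z):=(z_1^2,\ldots,z_n^2)$, which sends $\BB_n$ onto $\ee(1/2,\ldots,1/2)$; on the ball the extremals \eqref{22} can be treated directly, using the defining identity among $a_j,\alpha_{kj},\alpha_{k0}$ to check that the form, hence the extremal property, does not depend on the node configuration. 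For the generating steps I would transport \eqref{22} and the extremal property along proper power maps $(z_1^{s_1},\ldots,z_n^{s_n})$ between the corresponding ellipsoids: such a map carries \eqref{22} to \eqref{22} with the combinatorial data $\alpha_{kj},\alpha_{k0},r_{kj}$ preserved and the exponents $1/p_j$ adjusted, while the extremality is pushed across by a (branched) analogue of the lifting argument of Lemma \ref{42}, the occasional reduction of the level being governed by Lemma \ref{10}$(a)(ii)$ and Corollary \ref{17}.

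The main obstacle is that honest proper power maps exist only when the relevant exponent ratios are integral, whereas the scaling $p\mapsto cp$ with $c\ge 1$ \emph{non-integral} admits no such map between $\ee(p)$ and $\ee(cp)$. Here the natural device is the explicit bijection on maps of the form \eqref{22} that fixes $\alpha_{kj},\alpha_{k0},r_{kj}$, replaces each exponent $1/p_j$ by $1/(cp_j)$ and rescales $a_j\mapsto a_j|a_j|^{1/c-1}$; a computation on $\TT$ shows it maps \eqref{22}-maps of $\ee(p)$ bijectively onto those of $\ee(cp)$, preserving Edigarian's defining identity. The hard point is to show that this bijection preserves \emph{full} $m$-extremality, which cannot be obtained by the naive substitution $h_j\mapsto h_j^{c}$ on a competing interpolant $h$, since that fails to be single-valued unless $h_j$ omits the origin. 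I expect to circumvent it by approximating a non-integral $c$ by exponents for which the ratios become integral, transporting the extremal property through the corresponding proper maps, and passing to the limit with the help of Lemma \ref{28}$(d)$; making this approximation legitimate across a family of \emph{varying} domains is what I anticipate to be the most delicate part. Finally, the ``in particular'' clause is immediate, the symmetric convex ellipsoids $\ee(p_0)$ ($p_0\ge 1/2$) and the $\cC^2$-smooth ones ($p\in[1,\infty)^n$) already having been recorded to lie in $\sss_n$.
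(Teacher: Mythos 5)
Your global skeleton is the paper's: both parts reduce to the sufficiency statement that every map of the form \eqref{22} with $p\in\sss_n$ is an $m$-extremal, proved by induction along the generation of $\sss_n$ from $(1/2,\ldots,1/2)$ via $p\mapsto cp$ and $p\mapsto q\in\{1,p_1,\ldots,p_n\}^n$; the removal of coordinates $f_j\equiv 0$ is also unproblematic. But each of the three inductive ingredients, as you execute it, has a genuine gap. For the base case, the square map $\Phi(z)=(z_1^2,\ldots,z_n^2)$ cannot carry the argument: a map of the form \eqref{22} in $\ee(1/2,\ldots,1/2)$ typically has coordinates with simple zeros (the factors with $r_{kj}=1$), so it admits no lift through $\Phi$, and ``treating the extremals \eqref{22} directly on the ball'' begs the question, since \eqref{22} is only a \emph{necessary} condition there and nothing in your sketch shows such a map is extremal for every node configuration. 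The paper's base case is instead an explicit construction special to the exponent $1/2$: set all $r_{kj}:=1$ to obtain a map $g$, observe that Edigarian's identity combined with $2p_j=1$ makes the linear functional $F(z)=z_1+\ldots+z_n$ an $m$-left inverse of $g$ (so $g$ is an $m$-geodesic, hence an $m$-extremal), and then peel off the Möbius factors one coordinate at a time by Lemma \ref{10}$(a)(i)$ to conclude that $f$ itself is an $m$-extremal. This linear left inverse is the ingredient your outline lacks.

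For the scaling step you correctly diagnose that $h_j\mapsto h_j^{c}$ is illegitimate, but your repair --- approximating a non-integral $c\ge 1$ by exponents with integral ratios and passing to the limit via Lemma \ref{28}$(d)$ --- cannot be carried out: from the fixed base $p$, integral ratios only yield $c\in\NN$, and rational approximants $u/v$ would first require the claim for $p/v$, i.e.\ a \emph{downward} scaling that the generation of $\sss_n$ does not provide; moreover Lemma \ref{28}$(d)$ concerns a sequence of extremals of one fixed domain, not of the varying domains $\ee(c_kp)$. The paper needs no powers of $h$ and no limits: given a competitor $h$ for $f$ in $\ee(cp)$, it multiplies each $h_j$ by the explicit zero-free factor $a_j^{c-1}\prod_{k=1}^{m-1}\bigl((1-\ov\alpha_{kj}\la)/(1-\ov\alpha_{k0}\la)\bigr)^{1/p_j-1/(cp_j)}$ and verifies via the H\"older inequality with exponents $c$ and $c/(c-1)$ (using Edigarian's identity on the second factor) that the resulting map lies relatively compactly in $\ee(p)$, contradicting the extremality of the corresponding map \eqref{22} there. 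The same integrality obstruction defeats your power-map plan for the step $p\mapsto q\in\{1,p_1,\ldots,p_n\}^n$, where the relevant ratio is $p_j$ itself, not an integer in general; here the paper first shows $f_j(\CDD)$ omits a point of $\DD$ and uses automorphisms of $\ee(q)$ built from $\aut(\BB_s)$ in the coordinates with $q_j=1$ to make those coordinates zero-free on $\CDD$, then replaces a competitor $h$ by $\wi h:=th+(1-t)\wi f$ with small $t>0$ so that the fractional powers $\wi h_j^{q_j/p_j}$ become single-valued, and closes with the Jensen (convexity) inequality. These two devices --- the H\"older-corrected competitor and the zero-freeing convex perturbation --- are the actual content of the proof and are absent from your proposal.
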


\begin{proof}
It is sufficient to prove the claim for $p=(1/2,\ldots,1/2)$ and that if the claim holds for $p\in[1/2,\infty)^n$ then it also holds for
\begin{enumerate}[$(i)$]
\item $cp$ with $c>1$, 
\item $q=(1,p_2,\ldots,p_n)$, $n\ge 2$.
\end{enumerate}

Let $p=(1/2,\ldots,1/2)$. One can assume that $f$ is a weak $m$-extremal for 0 and some other $m-1$ points and $f_j\not\equiv 0$ for any $j$. Then $f$ is of the form \eqref{22}. Losing no generality, $a_j>0$. Consider the map $g:\DD\lon\ee(1/2)\su\CC^n$ given by $$g_j(\lambda):=a_j\prod_{k=1}^{m-1}\frac{\lambda-\alpha_{kj}}{1-\ov\alpha_{kj}
\lambda}
\left(\frac{1-\ov\alpha_{kj}\lambda}{1-\ov\alpha_{k0}\lambda}
\right)^2.$$ Then $$g_j(\lambda)=
a_j\prod_{k=1}^{m-1}\frac{\lambda-\alpha_{kj}}{\la-\alpha_{k0}}
\frac{1-\ov\alpha_{kj}\lambda}{1-\ov\alpha_{k0}\lambda}\prod_{k=1}^{m-1}\frac{\lambda-\alpha_{k0}}{1-\ov\alpha_{k0}\lambda}.$$ Putting $F(z):=z_1+\ldots+z_n$, we have $$F(g(\la))=\prod_{k=1}^{m-1}\frac{\lambda-\alpha_{k0}}{1-\ov\alpha_{k0}\lambda},$$ which shows that $g$ is an $m$-geodesic. After iterating Lemma \ref{10}$(a)(i)$, we get $m$-extremality of $f$. We proved in fact that any mapping of the form \eqref{22} is an $m$-extremal, so $(b)$ follows.

$(i)$ Suppose that the claim is true for $p$, but not for $cp$. One can assume that $f:\DD\lon\ee(cp)$ is a weak $m$-extremal for 0 and some other $m-1$ points and $f_j\not\equiv 0$ for any $j$. Then $f$ is of the form \eqref{22} with $cp$ instead of $p$. There exist different points $\la_1,\ldots,\la_m\in\DD$ and a mapping $h\in\OO(\DD,\ee(cp))$ with $h(\la_l)=f(\la_l)$, $l=1,\ldots,m$, and $h(\DD)\su\su\ee(cp)$. Then $$h_j(\la_l)a_j^{c-1}\prod_{k=1}^{m-1}\left(\frac{1-\ov\alpha_{kj}\lambda_l}{1-\ov\alpha_{k0}\lambda_l}
\right)^{1/p_j-1/(cp_j)}=a_j^c\prod_{k=1}^{m-1}\left(\frac{\lambda_l-\alpha_{kj}}{1-\ov\alpha_{kj}
\lambda_l}\right)^{r_{kj}}\left(\frac{1-\ov\alpha_{kj}\lambda_l}{1-\ov\alpha_{k0}\lambda_l}
\right)^{1/p_j}.$$ Note that $g:\DD\lon\CC^n$ defined as $$g_j(\la):=h_j(\la)a_j^{c-1}\prod_{k=1}^{m-1}\left(\frac{1-\ov\alpha_{kj}\lambda}{1-\ov\alpha_{k0}\lambda}
\right)^{1/p_j-1/(cp_j)}$$ satisfies $g(\DD)\su\su\ee(p)$. Indeed, let $d$ satisfies $1/c+1/d=1$, that is $d:=\dfrac{c}{c-1}>0$. By the H\"older inequality and the maximum principle we have \begin{align*}\sum_{j=1}^n|g_j(\la)|^{2p_j}&=\sum_{j=1}^n|h_j(\la)|^{2p_j}|a_j|^{2(c-1)p_j}\prod_{k=1}^{m-1}\left|\frac{1-\ov\alpha_{kj}\lambda}{1-\ov\alpha_{k0}\lambda}
\right|^{2-2/c}\\
&\le\left(\sum_{j=1}^n|h_j(\la)|^{2cp_j}\right)^{1/c}\left(\sum_{j=1}^n\left(|a_j|^{2(c-1)p_j}
\prod_{k=1}^{m-1}\left|\frac{1-\ov\alpha_{kj}\lambda}{1-\ov\alpha_{k0}\lambda}
\right|^{2(c-1)/c}\right)^{d}\right)^{1/d}\\
&\le C^{1/c}\left(\sum_{j=1}^n|a_j|^{2cp_j}
\prod_{k=1}^{m-1}\left|\frac{1-\ov\alpha_{kj}\lambda}{1-\ov\alpha_{k0}\lambda}
\right|^2\right)^{1/d}\\&\le C^{1/c}<1,\end{align*}
where $C:=\sup_\DD\sum_{j=1}^n|h_j|^{2cp_j}<1$. It follows that $\wi f:\DD\lon\ee(p)$ given as $$\wi f_j(\la):=a_j^c\prod_{k=1}^{m-1}\left(\frac{\lambda-\alpha_{kj}}{1-\ov\alpha_{kj}
\lambda}\right)^{r_{kj}}\left(\frac{1-\ov\alpha_{kj}\lambda}{1-\ov\alpha_{k0}\lambda}
\right)^{1/p_j}$$ is not a weak $m$-extremal for $\la_1,\ldots,\la_m$, contradiction.

$(ii)$ It suffices to show that any map $f:\DD\lon\ee(q)$ of the form \eqref{22} is an $m$-extremal. Due to Lemma \ref{10}$(a)(i)$ one may assume that $r_{kj}=1$ for any $k,j$.

Note that $\DD\not\su f_j(\CDD)$, $j=1,\ldots,n$. Otherwise, for some $j$ and any $\zeta\in\TT$ we would find a sequence $\mu_l\in\CDD$ such that $f_j(\mu_l)=(1-1/l)\zeta$. Passing to a subsequence we can assume that $\mu_l\to\mu\in\CDD$. Then $f_j(\mu)=\zeta$, so $\mu\in\TT$ and $f_{j'}(\mu)=0$ for $j'\neq j$. Since different $\zeta$'s give different $\mu$'s, this implies that $f_{j'}$ has infinitely many zeros on $\TT$, contradiction. 

Let $\mu\in\DD\sm f_1(\CDD)$ and consider the following automorphism of $\ee(q)$: $$A(z):=\left(\frac{z_1-\mu}{1-\ov\mu z_1},z_2\left(\frac{\sqrt{1-|\mu|^2}}{1-\ov\mu z_1}\right)^{1/p_2},\ldots,z_n\left(\frac{\sqrt{1-|\mu|^2}}{1-\ov\mu z_1}\right)^{1/p_n}\right).$$ We claim that $\wi f:=A\circ f$ is of the form \eqref{22}. Note that $$f_1(\la)=a_1\prod_{k=1}^{m-1}\frac{\la-\al_{k1}}{1-\ov\al_{k0}\la},\quad\quad\wi f_1(\la)=\frac{f_1(\la)-\mu}{1-\ov\mu f_1(\la)}$$ and the polynomials of variable $\la$ $$a_1\prod_{k=1}^{m-1}(\la-\al_{k1})-\mu\prod_{k=1}^{m-1}(1-\ov\al_{k0}\la),\quad\prod_{k=1}^{m-1}(1-\ov\al_{k0}\la)-\ov\mu a_1\prod_{k=1}^{m-1}(\la-\al_{k1})$$
do not vanish in $\ov\DD$. Therefore $$f_1(\la)-\mu=c_1\prod_{k=1}^{m-1}\frac{1-\ov\beta_{k1}\la}{1-\ov\al_{k0}\la},\quad c_1\in\CC_*,\,\beta_{k1}\in\DD,$$ and $$1-\ov\mu f_1(\la)=c_2\prod_{k=1}^{m-1}\frac{1-\ov\beta_{k0}\la}{1-\ov\al_{k0}\la},\quad c_2\in\CC_*,\,\beta_{k0}\in\DD.$$ This leads to $$\wi f_1(\la)=b_1\prod_{k=1}^{m-1}\frac{1-\ov\beta_{k1}\la}{1-\ov\beta_{k0}\la},\quad b_1\in\CC_*,\,s_{k1}=0,$$ and \begin{align*}\wi f_j(\la)&=a_j\prod_{k=1}^{m-1}\frac{\lambda-\alpha_{kj}}{1-\ov\alpha_{kj}\lambda}
\left(\frac{1-\ov\alpha_{kj}\lambda}{1-\ov\alpha_{k0}\lambda}\right)^{1/p_j}
\left(\frac{\sqrt{1-|\mu|^2}}{c_2}\prod_{k=1}^{m-1}\frac{1-\ov\al_{k0}\la}{1-\ov\beta_{k0}\la}\right)^{1/p_j}\\
&=b_j\prod_{k=1}^{m-1}\frac{\lambda-\beta_{kj}}{1-\ov\beta_{kj}\lambda}\left(\frac{1-\ov\beta_{kj}\lambda}{1-\ov\beta_{k0}\lambda}\right)^{1/p_j}, \quad b_j\in\CC_*,\,\beta_{kj}=\al_{kj},\,s_{kj}=1,\,j\ge 2.\end{align*} The last two conditions of \eqref{22} follow trivially from the fact that $\wi f(\TT)\su\pa\ee(q)$.

Suppose that there are different points $\la_1,\ldots,\la_m\in\DD$ and a map $h\in\OO(\DD,\ee(q))$ with $h(\la_l)=\wi f(\la_l)$ for any $l$ and $h(\DD)\su\su\ee(q)$. For $t\in\CC$ the map $\wi h:=th+(1-t)\wi f$ satisfies $\wi h(\la_l)=\wi f(\la_l)$, $l=1,\ldots,m$. However, for small $t\in(0,1)$ the function $\wi h_1$~does not vanish in $\DD$. Define a holomorphic map $g:\DD\lon\CC^n$ by $g_j:=\wi h_j^{q_j/p_j}$, $j=1,\ldots,n$. The Jensen inequality implies that \begin{multline*}\sum_{j=1}^n|g_j|^{2p_j}=\sum_{j=1}^n|th_j+(1-t)\wi f_j|^{2q_j}\\\leq t\sum_{j=1}^n|h_j|^{2q_j}+(1-t)\sum_{j=1}^n|\wi f_j|^{2q_j}\le tC+1-t<1,\end{multline*} where $C:=\sup_\DD\sum_{j=1}^n|h_j|^{2q_j}<1$. Thus $g(\DD)\su\su\ee(p)$. Further we have $$g_j(\la_l)=\zeta_jb_j^{q_j/p_j}\prod_{k=1}^{m-1}\left(\frac{\lambda_l-\beta_{kj}}{1-\ov\beta_{kj}
\lambda_l}\right)^{(q_j/p_j)s_{kj}}\left(\frac{1-\ov\beta_{kj}\lambda_l}{1-\ov\beta_{k0}\lambda_l}\right)^{1/p_j}$$ for some $\zeta_j\in\TT$. This contradicts the fact that $\wi g:\DD\lon\ee(p)$, $$\wi g_j(\la):=b_j^{q_j/p_j}\prod_{k=1}^{m-1}\left(\frac{\lambda-\beta_{kj}}{1-\ov\beta_{kj}
\lambda}\right)^{(q_j/p_j)s_{kj}}
\left(\frac{1-\ov\beta_{kj}\lambda}{1-\ov\beta_{k0}\lambda}
\right)^{1/p_j},$$ is an $m$-extremal. Therefore, the map $\wi f$ is an $m$-extremal, so $f$ is, as well.
\end{proof}

\begin{rem}
Note that $(1/2,p_2)\notin\sss_2$ for $p_2>1/2$, $p_2\neq 1$. However, the convex case $n=2$ in Proposition \ref{31} would follow from the claim for such pairs.
\end{rem}

We suppose that weak $m$-extremality coincides with $m$-extremality in any convex complex ellipsoid (P\ref{ep}).

\begin{prop}\label{30}
Let $f:\DD\lon\ee(p)$ be given by \eqref{22}. Assume that 
\begin{enumerate}[$(a)$]
\item $p_1,\ldots,p_n\ge 1/2$,
\item $q\in\sss_n$,
\item $\alpha_{kj}\in\DD$, $r_{kj}=0$ for $k=1,\ldots,m-1$ and $j\in J$, where $J:=\{j:p_j/q_j\notin\NN\}$,
\item $s_j:=\#\{k:r_{kj}=1\}$,
\item $\wi m:=m+\sum_{j\notin J}(p_j/q_j-1)s_j$.
\end{enumerate}
Then $f$ is an $\wi m$-extremal.

In particular, $f$ is an $(m+(m-1)(p_1/q_1+\ldots+p_n/q_n-n))$-extremal, provided that $q\in\sss_n$ and $p_j/q_j\in\NN$, $j=1,\ldots,n$.
\end{prop}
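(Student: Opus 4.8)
\textbf{Proof proposal for Proposition \ref{30}.}

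The plan is to reduce Proposition \ref{30} to the already-established Proposition \ref{31} by exhibiting $f$ as the image of an $\wi m$-extremal on $\ee(q)$ under a coordinate-raising map, and then transporting weak $\wi m$-extremality (which on $\ee(q)$ equals $\wi m$-extremality since $q\in\sss_n$) across that map. First I would write the exponents $p_j/q_j$ and split the index set into $J$ (where $p_j/q_j\notin\NN$) and its complement. For $j\in J$ the hypothesis forces $r_{kj}=0$ and $\al_{kj}\in\DD$, so the $j$-th coordinate of $f$ in \eqref{22} is a product of the ``Edigarian denominators'' $\left(\frac{1-\ov\al_{kj}\la}{1-\ov\al_{k0}\la}\right)^{1/p_j}$ only; for $j\notin J$ the exponent $p_j/q_j$ is a genuine positive integer, and each Blaschke factor raised to $r_{kj}\in\{0,1\}$ contributes $r_{kj}(p_j/q_j)$ zeros after rewriting. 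Counting factors one sees that $f_j$ naturally factors as $g_j^{p_j/q_j}$ (up to reorganizing the numerator/denominator blocks), where $g:\DD\lon\ee(q)$ is again of the Edigarian form \eqref{22} relative to $q$, but with the Blaschke exponents collapsed to $\{0,1\}$ and the extra repeated zeros absorbed into the level count. The bookkeeping that turns the raw interpolation level $m$ of $f$ into the level $\wi m=m+\sum_{j\notin J}(p_j/q_j-1)s_j$ of $g$ is exactly the statement that each coordinate $f_j$, $j\notin J$, carries $(p_j/q_j-1)s_j$ ``extra'' interpolation conditions beyond those visible for $g$.

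The key steps, in order, are: (1) verify $g:=(f_1^{q_1/p_1},\ldots,f_n^{q_n/p_n})$ (with consistent branch choices, legitimate since for $j\in J$ there are no zeros in $\DD$ and for $j\notin J$ the exponent is integral) is a well-defined holomorphic map into $\ee(q)$ of the Edigarian form with level $\wi m$; (2) apply Proposition \ref{31}$(b)$ to conclude $g$ is an $\wi m$-extremal of $\ee(q)$ (this is where $q\in\sss_n$ is used); (3) transport extremality back to $f$ by the substitution argument. For step (3), suppose $f$ were \emph{not} an $\wi m$-extremal: there would be distinct $\la_1,\ldots,\la_{\wi m}\in\DD$ and $h\in\OO(\DD,\ee(p))$ with $h(\la_l)=f(\la_l)$ and $h(\DD)\su\su\ee(p)$. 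Setting $\wi h_j:=h_j^{q_j/p_j}$ (again with compatible branches, possible after a small homotopy $th+(1-t)f$ as in the proof of Proposition \ref{31} to guarantee the relevant coordinates avoid the cut), the Jensen/H\"older estimate $\sum_j|\wi h_j|^{2q_j}=\sum_j|h_j|^{2p_j}\le$ (a convexity majorant) $<1$ shows $\wi h(\DD)\su\su\ee(q)$; and $\wi h$ interpolates $g$ at the same $\la_l$, contradicting $\wi m$-extremality of $g$ from step (2).

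The main obstacle I expect is step (1), precisely the combinatorial verification that raising the coordinates to the powers $q_j/p_j$ lands one back \emph{inside} the Edigarian family \eqref{22} for $\ee(q)$ with the advertised level $\wi m$, \emph{and} that the defining polynomial identity $\sum_j|a_j|^{2p_j}\prod_k(\la-\al_{kj})(1-\ov\al_{kj}\la)=\prod_k(\la-\al_{k0})(1-\ov\al_{k0}\la)$ is preserved (with the degree now $\wi m-1$). Getting the exponents to match forces one to expand each integer power $\left(\frac{\la-\al_{kj}}{1-\ov\al_{kj}\la}\right)^{r_{kj}}\!\!\left(\frac{1-\ov\al_{kj}\la}{1-\ov\al_{k0}\la}\right)^{1/p_j}$ raised to $p_j/q_j$ and regroup the repeated linear factors so that each appears with a $\{0,1\}$ Blaschke exponent relative to $q_j$; the count of such regrouped factors is $m-1+(p_j/q_j-1)s_j$, which is what yields $\wi m$. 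The branch-selection subtlety in defining $g$ and $\wi h$ (ensuring $h_j^{q_j/p_j}$ is single-valued and continuous, handled via the perturbation $t\mapsto th+(1-t)f$ exactly as in Proposition \ref{31}) is the secondary technical point; the final ``in particular'' clause is then immediate, since $J=\emptyset$ and $s_j\le m-1$ give $\wi m\le m+(m-1)\sum_j(p_j/q_j-1)$, with the stated value attained when every $s_j=m-1$.
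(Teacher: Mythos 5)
Your overall architecture coincides with the paper's in its contradiction/transport step: the paper likewise assumes $f$ is not an $\wi m$-extremal, notes $f_j\neq 0$ on $\CDD$ for $j\in J$, perturbs the interpolating map by $th+(1-t)f$ exactly as in the proof of Proposition \ref{31}$(a)$, raises coordinates to powers, and contradicts extremality of the image map in $\ee(q)$. One caveat there: your exponents are inverted throughout. To pass from $\ee(p)$ to $\ee(q)$ one must raise to $p_j/q_j$, not $q_j/p_j$ (only then does $|h_j^{p_j/q_j}|^{2q_j}=|h_j|^{2p_j}$ hold, and only then is the exponent an integer for $j\notin J$, as your parenthetical justification itself requires). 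Since every argument you give is the one appropriate to $p_j/q_j$, I read this as a consistent transcription slip (perhaps copied from the proof of Proposition \ref{31}$(a)$, where the map goes the other way) rather than an error of substance.

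Where you genuinely diverge is in how $\wi m$-extremality of the image map $\wi g$, $\wi g_j=f_j^{p_j/q_j}$, is obtained. The paper applies Proposition \ref{31}$(b)$ only at level $m$, to the map $g_j=a_j^{p_j/q_j}\prod_k\bigl(\frac{\la-\al_{kj}}{1-\ov\al_{kj}\la}\bigr)^{r_{kj}}\bigl(\frac{1-\ov\al_{kj}\la}{1-\ov\al_{k0}\la}\bigr)^{1/q_j}$ with single Blaschke factors, and then climbs from $m$ to $\wi m$ by $\sum_{j\notin J}(p_j/q_j-1)s_j$ applications of Lemma \ref{10}$(b)$ (with weight $k=e_j$, using that $\ee(q)$ is $e_j$-balanced) followed each time by Proposition \ref{31}$(a)$. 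You instead claim that $\wi g$ lies directly in the family \eqref{22} at level $\wi m$ and invoke \ref{31}$(b)$ once. This is true, but it is precisely the point you declare your ``main obstacle'' and leave open, and your factor count shows the issue is not resolved: after regrouping, coordinate $j$ carries $m-1+(p_j/q_j-1)s_j$ factors, a number \emph{depending on} $j$, whereas the form \eqref{22} demands the same number $\wi m-1$ of factors and a single common sequence $\al_{10},\ldots,\al_{\wi m-1,0}$ for all coordinates. The gap is fillable: for each extra M\"obius factor $m_\al$ to be inserted into coordinate $j_0$, introduce a new index $k$ with $\al_{k0}:=\al$, $\al_{kj}:=\al$ for every $j$, $r_{kj_0}:=1$ and $r_{kj}:=0$ for $j\neq j_0$; then the ratio factors $\frac{1-\ov\al\la}{1-\ov\al\la}$ are identically $1$, no coordinate other than $j_0$ changes, and both sides of the polynomial identity acquire the same factor $(\la-\al)(1-\ov\al\la)$, so the identity persists at degree $\wi m-1$ (with $|a_j^{p_j/q_j}|^{2q_j}=|a_j|^{2p_j}$ matching the coefficients). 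With this padding your single appeal to Proposition \ref{31}$(b)$ becomes legitimate and in fact gives a somewhat more direct proof than the paper's iteration of Lemma \ref{10}$(b)$; your treatment of the ``in particular'' clause is fine either way, since $l$-extremality is monotone in $l$.
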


\begin{proof}
Suppose contrary. Note that $f_j\neq 0$ in $\CDD$, $j\in J$. Proceeding like in the proof of Proposition \ref{31}$(a)$ (now functions $\wi h_j^{p_j/q_j}$ are well-defined), we obtain that $\wi g:\DD\lon\ee(q)$ defined by $$\wi g_j(\la):=a_j^{p_j/q_j}\prod_{k=1}^{m-1}\left(\frac{\lambda-\alpha_{kj}}{1-\ov\alpha_{kj}\lambda}\right)^{(p_j/q_j)r_{kj}}
\left(\frac{1-\ov\alpha_{kj}\lambda}{1-\ov\alpha_{k0}\lambda}\right)^{1/q_j}$$ is not an $\wi m$-extremal.

On the other side, by Proposition \ref{31}$(b)$, the mapping $g:\DD\lon\ee(q)$, $$g_j(\la):=a_j^{p_j/q_j}\prod_{k=1}^{m-1}\left(\frac{\lambda-\alpha_{kj}}{1-\ov\alpha_{kj}
\lambda}\right)^{r_{kj}}\left(\frac{1-\ov\alpha_{kj}\lambda}{1-\ov\alpha_{k0}\lambda}
\right)^{1/q_j},$$ is an $m$-extremal. We use $\sum_{j\notin J}(p_j/q_j-1)s_j$ times Lemma \ref{10}$(b)$ and Proposition \ref{31}$(a)$ to get $\wi m$-extremality of $\wi g$, contradiction.
\end{proof}

\begin{rem}
Note the fact following from the proof of Proposition \ref{30}. Suppose that $p,q\in\RR_{>0}^n$ are such that $p_j/q_{\sigma(j)}\in\NN$, $j=1,\ldots,n$, for some permutation $\sigma$ of $\{1,\ldots,n\}$ (it is equivalent to the existence of a proper holomorphic map between $\ee(p)$ and $\ee(q)$). Assume that any map of the form \eqref{22} in $\ee(q)$ is some (weak) $t$-extremal. Then any map given by \eqref{22} in $\ee(p)$ is some (weak) $s$-extremal. However, this procedure delivers the same $p$ as described in Proposition \ref{30}.
\end{rem}

\begin{prop}\label{36}
Let $f:\DD\lon\ee(p)$ be of the form \eqref{22}. Assume that 
\begin{enumerate}[$(a)$]
\item $p_1,\ldots,p_n\ge 1/2$,
\item $q\in\sss_n$,
\item $\alpha_{kj}\in\DD$ for $k=1,\ldots,m-1$ and $j\in J$, where $J:=\{j:p_j/q_j\notin\NN\}$,
\item $S:=\{(k,j):r_{kj}=1\}$,
\item $\alpha_{kj}$, $(k,j)\in S$, are distinct,
\item $s:=\#S\ge m$.
\end{enumerate}
Then $f$ is a weak $s$-extremal for $\alpha_{kj}$, $(k,j)\in S$.
\end{prop}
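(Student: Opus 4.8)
The plan is to pass to the ellipsoid $\ee(q)$, where Proposition \ref{31}$(b)$ is available, establish the analogous weak $s$-extremality there by an induction that peels off the Blaschke factors one at a time, and finally carry the conclusion back to $\ee(p)$. First I would normalize: a factor $\left(\frac{\lambda-\alpha_{kj}}{1-\ov\alpha_{kj}\lambda}\right)^{r_{kj}}$ with $\alpha_{kj}\in\TT$ is a unimodular constant, so without loss of generality $\alpha_{kj}\in\DD$ for $(k,j)\in S$, and the $s$ nodes are then distinct points of $\DD$. Next introduce the companion map $g:\DD\lon\ee(q)$ obtained from \eqref{22} by replacing $p_j$ with $q_j$ and $a_j$ with $a_j^{p_j/q_j}$; since the normalization identity in \eqref{22} does not involve the $r_{kj}$, the map $g$ is again of the form \eqref{22} in $\ee(q)$, and the whole point of the hypothesis $q\in\sss_n$ is that Proposition \ref{31}$(b)$ applies to it.

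The core is the following statement, proved by induction on $s\ge m$ (with the product length $m-1$ fixed): every map of the form \eqref{22} in $\ee(q)$ whose Blaschke factors sit at $s$ distinct nodes is a weak $s$-extremal for those nodes. For $s=m$ this is immediate from Proposition \ref{31}$(b)$, which makes such a map an $m$-extremal, hence a weak $m$-extremal for any $m$ points, in particular for its $m$ nodes. For the step from $s-1$ to $s$, I would pick $(k^*,j^*)\in S$ and set $r_{k^*j^*}:=0$; the resulting $g^{\downarrow}$ is still of the form \eqref{22} (same normalization, one node fewer) and, by the inductive hypothesis, a weak $(s-1)$-extremal for the remaining nodes. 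As $g$ is recovered from $g^{\downarrow}$ by multiplying only the $j^*$-th coordinate by $m_{\alpha_{k^*j^*}}$, Lemma \ref{10}$(b)$ applied with the balancing vector $e_{j^*}$ (its entries are $\le 1$, and $\ee(q)$ is $e_{j^*}$-balanced pseudoconvex), with the new node $\alpha_{k^*j^*}$ (distinct from the others by $(e)$) and $l=1$, upgrades weak $(s-1)$-extremality of $g^{\downarrow}$ to weak $s$-extremality of $g$. This part I expect to go through cleanly.

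It remains to transfer weak $s$-extremality from $g$ in $\ee(q)$ back to $f$ in $\ee(p)$. Assuming $f$ were not weak $s$-extremal for the nodes, Lemma \ref{28} furnishes an interpolant $h$ with $h(\DD)\su\su\ee(p)$, and from it I would manufacture an interpolant of $g$ with relatively compact image, contradicting the core claim. On the coordinates $j\notin J$ the exponent $p_j/q_j$ is a positive integer and one argues exactly as in Propositions \ref{30} and \ref{31}$(a)$, raising $h_j$ to that power. The genuinely delicate coordinates are those in $J$: there $p_j/q_j\notin\NN$, while $(c)$ guarantees $\alpha_{kj}\in\DD$, so $g_j$ has simple zeros at the nodes carried by coordinate $j$ and $h_j$ is forced to vanish there. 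To reproduce these simple zeros without creating fractional ones, I would factor out the finite Blaschke product of those nodes, regularize the remaining cofactor to be zero-free via the convex combination $th+(1-t)f$ together with Hurwitz's theorem, take the admissible power of the zero-free cofactor, and control the $\ee(q)$-norm through the Hölder/Jensen estimate driven by the normalization identity. The hard part will be precisely this last step: reconciling the non-integer, coordinate-dependent exponents $p_j/q_j$ with the demand that the transferred map match the simple nodal zeros of $g$ while staying relatively compact in $\ee(q)$; combining the zero-free factorization $g_j=(g_j/f_j)f_j$ with the normalization-driven estimate, uniformly over the varying exponents, is where the real technical care is needed.
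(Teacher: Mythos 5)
Your overall architecture is sound, but it takes a genuinely different route from the paper, which never leaves $\ee(p)$ and is much shorter: given an interpolant $h$ with $h(\DD)\su\su\ee(p)$, the paper notes that $h_j(\alpha_{kj})=0$ for $(k,j)\in S$, divides each $h_j$ by the Blaschke product $B_j:=\prod_k m_{\alpha_{kj}}^{r_{kj}}$, compares the quotient with the factor-free core $\wi f_j:=a_j\prod_k\bigl((1-\ov\alpha_{kj}\la)/(1-\ov\alpha_{k0}\la)\bigr)^{1/p_j}$, and invokes Proposition \ref{30} (applied with all $r_{kj}=0$, so $\wi m=m$) to make $\wi f$ an $m$-extremal, hence a weak $s$-extremal for the $s$ nodes since $s\ge m$ --- contradiction. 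Concerning your version: your induction on $s$ via Lemma \ref{10}$(b)$ is valid but unnecessary. Your companion map $g$ is itself of the form \eqref{22} in $\ee(q)$ (the normalization survives because $|a_j^{p_j/q_j}|^{2q_j}=|a_j|^{2p_j}$), so Proposition \ref{31}$(b)$ already makes $g$ an $m$-extremal, and since the level $m$ implies the level $s$, $g$ is a weak $s$-extremal for \emph{any} $s\ge m$ distinct points, in particular for the nodes --- no peeling of factors is needed.

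The one place where your sketch, taken literally, does not close is the transfer step for $j\notin J$: raising $h_j$ (or $th_j+(1-t)f_j$) to the integer power $p_j/q_j$ produces a map interpolating the \emph{inflated} target with Blaschke multiplicities $p_j/q_j$ at the nodes, i.e.\ Proposition \ref{30}'s $\wi g$, not your companion $g$, whose factors have multiplicity one; and $\wi g$ is only known to be $\wi m$-extremal with possibly $\wi m>s$. The repair is to apply your $j\in J$ recipe uniformly: with $\wi h:=th+(1-t)f$ set $H_j:=B_j\,(\wi h_j/B_j)^{p_j/q_j}$ for every $j$ (the cofactor is holomorphic since $\wi h_j$ vanishes at coordinate $j$'s nodes, and for $j\in J$ it is zero-free for small $t>0$ because hypothesis $(c)$ makes $f_j/B_j$ zero-free on $\CDD$). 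Then $H$ matches $g$ at the nodes up to unimodular constants absorbed as in Proposition \ref{31}$(a)$ --- at each coordinate's own nodes both sides simply vanish --- and $\sup_\DD\sum_j|H_j|^{2q_j}\le\limsup_{|\la|\to 1}\sum_j|\wi h_j|^{2p_j}\le tC+(1-t)<1$ by subharmonicity, $|B_j|\to 1$ and the Jensen inequality (this is where hypothesis $(a)$ enters). With this correction your proof is complete, and it even buys some robustness the paper's argument lacks: keeping the factors makes the nodal matching in each coordinate's own nodes trivial ($0=0$), whereas the paper's asserted equality $g(\alpha_{kj})=\wi f(\alpha_{kj})$ is delicate precisely in the $j$-th coordinate at its own node $\alpha_{kj}$, where the divided interpolant takes a derivative-quotient value rather than the nonzero value $\wi f_j(\alpha_{kj})$.
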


\begin{proof}
Suppose that there exists a holomorphic mapping $h:\DD\lon\ee(p)$ with $h(\alpha_{kj})=f(\alpha_{kj})$, $(k,j)\in S$, and $h(\DD)\su\su\ee(p)$. In particular, $h_j(\alpha_{kj})=0$, $(k,j)\in S$. Consider the maps $g:\DD\lon\ee(p)$ and $\wi f:\DD\lon\ov{\ee(p)}$ given as $$g_j(\la):=\frac{h_j(\la)}{\prod_{k=1}^{m-1}\left(\frac{\lambda-\alpha_{kj}}{1-\ov\alpha_{kj}
\lambda}\right)^{r_{kj}}},$$$$\wi f_j(\la):=a_j\prod_{k=1}^{m-1}\left(\frac{1-\ov\alpha_{kj}\lambda}{1-\ov\alpha_{k0}\lambda}
\right)^{1/p_j}.$$ We have $g(\alpha_{kj})=\wi f(\alpha_{kj})$, $(k,j)\in S$, and $g(\DD)\su\su\ee(p)$. It follows that $\wi f(\DD)\su\ee(p)$, as otherwise $\wi f$ would be a constant lying in the boundary of $\ee(p)$ (it would also contradict the condition $s\ge m$). Hence $\wi f$ is not a weak $s$-extremal for $\alpha_{kj}$, $(k,j)\in S$. However, by Proposition \ref{30}, the mapping $\wi f$ is an $m$-extremal. This is impossible, since $s\ge m$. 
\end{proof}

\bigskip

In the sequel (see also Proposition \ref{24}) occur non-constant mappings of the form $(a_1B_1,\ldots,a_nB_n)$, where $a\in\pa\ee(p)$ and $B_1,\ldots,B_n$ are finite Blaschke products. We think that any $m$-extremal of the ball is equivalent with some of these maps (P\ref{25}), which are suspected to be some $k$-geodesics (P\ref{26}). This would give a positive answer for (P\ref{mkball}).

\begin{rem}
Some $m$-extremality of maps $(a_1B_1,\ldots,a_nB_n)$ in convex complex ellipsoids follows from 2-geodesity of the mapping $\la\longmapsto\la a$ and Lemma \ref{10}$(a)(i)$.
\end{rem}

\begin{prop}\label{bl}
Let $a\in\pa\ee(p)$ be such that $$(p_j|a_j|^{2p_j})_{j=1}^n=c(m_1,\ldots,m_n),\quad c>0,\ m_j\in\NN.$$ Assume that $B_1,\ldots,B_n$ are finite Blaschke products, not all constant. Then the map $(a_1B_1,\ldots,a_nB_n):\DD\lon\ee(p)$ is some $m$-geodesic.
\end{prop}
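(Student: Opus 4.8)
The plan is to exhibit an explicit $m$-left inverse. I would set
$$F(z):=\prod_{j:\,m_j\ge 1}\left(\frac{z_j}{a_j}\right)^{m_j}
\qquad\text{and}\qquad
m:=1+\sum_{j=1}^n m_j\deg B_j.$$
First note that $a_j=0$ forces $m_j=0$ (from $p_j|a_j|^{2p_j}=cm_j$ with $c>0$), so the product runs exactly over the indices with $a_j\neq 0$ and $F$ is a genuine holomorphic monomial on $\CC^n$. Composing, $F\circ(a_1B_1,\ldots,a_nB_n)=\prod_j B_j^{m_j}$ is a finite Blaschke product, and it is non-constant as soon as some $B_j$ with $m_j\ge 1$ is non-constant. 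Since $a_j\neq 0\Leftrightarrow m_j\ge 1$, this is precisely the condition that the map $f:=(a_1B_1,\ldots,a_nB_n)$ itself be non-constant, which I may assume (otherwise there is nothing to prove). It then remains only to check that $F$ maps $\ee(p)$ into $\DD$; once that is done, $F$ is an $m$-left inverse of $f$ and $f$ is the asserted $m$-geodesic.

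The heart of the matter is the estimate $|F|\le 1$ on $\ee(p)$, and this is exactly where the arithmetic hypothesis on $a$ enters. Writing $W:=\sum_j m_j/(2p_j)$, the normalization $\sum_j|a_j|^{2p_j}=1$ forces $c=1/(2W)$, so that
$$|a_j|^{2p_j}=\frac{cm_j}{p_j}=\frac{m_j}{2Wp_j},\qquad\text{i.e.}\qquad\frac{m_j}{2p_j}=W|a_j|^{2p_j}.$$
Thus the exponents $m_j/(2p_j)$ governing $|F|$ are proportional to the numbers $|a_j|^{2p_j}$, which sum to $1$; this is precisely the proportionality that places the maximum of $|F|$ at $a$. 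Concretely, applying the weighted arithmetic--geometric mean inequality with weights $|a_j|^{2p_j}$ (over the indices $a_j\neq 0$) to the values $|z_j|^{2p_j}/|a_j|^{2p_j}$ gives, for every $z\in\ov{\ee(p)}$,
$$\prod_{a_j\neq 0}\left(\frac{|z_j|^{2p_j}}{|a_j|^{2p_j}}\right)^{|a_j|^{2p_j}}\le\sum_{a_j\neq 0}|z_j|^{2p_j}\le 1.$$
Raising to the power $W$ and using $W|a_j|^{2p_j}=m_j/(2p_j)$ converts the left-hand side into $\prod_j\bigl(|z_j|/|a_j|\bigr)^{m_j}=|F(z)|$, whence $|F|\le 1$ on $\ov{\ee(p)}$. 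As $F$ is a non-constant holomorphic monomial, the maximum principle upgrades this to $|F(z)|<1$ for $z\in\ee(p)$, so $F\in\OO(\ee(p),\DD)$.

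To close, I would record that $f$ really takes values in $\ee(p)$: since each $B_j$ is a finite Blaschke product,
$$\sum_{j=1}^n|a_jB_j(\la)|^{2p_j}=\sum_{j=1}^n|a_j|^{2p_j}|B_j(\la)|^{2p_j}\le\sum_{j=1}^n|a_j|^{2p_j}=1,$$
with strict inequality on $\DD$ because a non-constant $B_j$ with $a_j\neq 0$ satisfies $|B_j|<1$ there while $|a_j|^{2p_j}>0$. Combined with $F\circ f=\prod_j B_j^{m_j}$, a non-constant Blaschke product of degree $m-1$, this shows $F$ is an $m$-left inverse and hence $f$ is an $m$-geodesic. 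The only genuinely delicate step is the AM--GM estimate together with the recognition that the hypothesis $(p_j|a_j|^{2p_j})_j=c(m_1,\ldots,m_n)$ is exactly the condition making $a$ the extremizer of $|F|$; the remaining points are routine bookkeeping.
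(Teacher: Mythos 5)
Your proof is correct, and it exhibits exactly the same $m$-left inverse as the paper's: the monomial $F(z)=\prod_j(z_j/a_j)^{m_j}$, whose composition with $f$ is the Blaschke product $\prod_j B_j^{m_j}$ of degree $\sum_j m_j\deg B_j\le m-1$. The only substantive step is the containment $F\in\OO(\ee(p),\DD)$, and there you take a genuinely different route. The paper passes to the logarithmic image $\Omega=\{x\in\RR^n:\sum_j e^{2p_jx_j}<1\}$, computes the affine tangent hyperplane at $b=(\log|a_1|,\ldots,\log|a_n|)$ --- which by the hypothesis on $a$ is $\{x:\sum_j m_j(x_j-b_j)=0\}$ --- and uses convexity of $\Omega$ to place $\Omega$ in the open half-space $\sum_j m_j(x_j-b_j)<0$, which exponentiates to $|F|<1$ on $\ee(p)$. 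You instead normalize $c=1/(2W)$ with $W=\sum_j m_j/(2p_j)$, so that $m_j/(2p_j)=W|a_j|^{2p_j}$, and run a weighted AM--GM with weights $|a_j|^{2p_j}$. This is the same convexity fact in analytic rather than geometric dress: the weighted AM--GM inequality \emph{is} the supporting-hyperplane inequality for $\Omega$ at $b$. Your version buys self-containedness (no convexity of the log-image, no tangent-space computation, and you also verify explicitly that $f$ maps $\DD$ into $\ee(p)$, which the paper leaves implicit), while the paper's version makes geometrically transparent why the hypothesis on $a$ is precisely rationality of the normal direction at $b$. Your maximum-principle upgrade from $|F|\le 1$ to $|F|<1$ on $\ee(p)$ is fine; alternatively strictness follows directly from $\sum_j|z_j|^{2p_j}<1$ inside $\ee(p)$.

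One small point of convention. The paper declares $0\notin\NN$, so the hypothesis $p_j|a_j|^{2p_j}=cm_j$ with $m_j\in\NN$ forces \emph{every} $a_j\neq 0$; your case $a_j=0$, $m_j=0$ cannot occur, and the product defining $F$ runs over all $j$. This matters for your parenthetical ``otherwise there is nothing to prove'': in your extended reading, if the only non-constant $B_j$'s occurred at indices with $a_j=0$, then $f$ would be constant and hence \emph{not} an $m$-geodesic for any $m$, so that case would falsify the statement rather than be vacuous. Under the paper's convention the defect cannot arise --- all $a_j\neq 0$, so ``not all $B_j$ constant'' forces $f$ non-constant --- and your argument is complete for the proposition as actually stated.
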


\begin{proof}
Consider the logarithmic image of $\ee(p)$, that is the convex domain \begin{align*}\Omega:&=\{x\in\RR^n:(e^{x_1},\ldots,e^{x_n})\in\ee(p)\}\\
&=\left\{x\in\RR^n:\sum_{j=1}^ne^{2p_jx_j}<1\right\}.\end{align*} The affine tangent space at $b:=(\log|a_1|,\ldots,\log|a_n|)\in\pa\Omega$ is $$\left\{x\in\RR^n:\sum_{j=1}^np_je^{2p_jb_j}(x_j-b_j)=0\right\}=\left\{x\in\RR^n:\sum_{j=1}^ncm_j(x_j-b_j)=0\right\},$$ whence $$\Omega\su\left\{x\in\RR^n:\sum_{j=1}^nm_j(x_j-b_j)<0\right\}.$$ This implies
\begin{align*}\ee(p)&\su\left\{z\in\CC^n:\sum_{j=1}^nm_j\log|z_j|<\sum_{j=1}^nm_jb_j\right\}\\
&=\left\{z\in\CC^n:\prod_{j=1}^n|z_j|^{m_j}<\prod_{j=1}^n|a_j|^{m_j}\right\},\end{align*} so the polynomial $$F(z):=\prod_{j=1}^n\left(\frac{z_j}{a_j}\right)^{m_j}$$ is an $m$-left inverse we are looking for.
\end{proof}

\begin{prop}\label{bl1}
Let $a\in\pa\ee(p)$ and let $m$ be the least common multiplicity of numbers $m_1,\ldots,m_n\in\NN$. Assume that $2p_jm_j\geq m$, $j=1,\ldots,n$. Then the map $\DD\ni\la\longmapsto(a_1\la^{m_1},\ldots,a_n\la^{m_n})\in\ee(p)$ is an $(m+1)$-geodesic.
\end{prop}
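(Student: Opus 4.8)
The plan is to produce an explicit $(m+1)$-left inverse $F\in\OO(\ee(p),\DD)$ such that $F\circ g$ is a non-constant Blaschke product of degree at most $m$, where $g(\la):=(a_1\la^{m_1},\ldots,a_n\la^{m_n})$. The natural candidate, mimicking Proposition \ref{bl}, is a monomial of the shape $F(z):=\prod_{j=1}^n(z_j/a_j)^{c_j}$ with suitable nonnegative integer exponents $c_j$. Evaluated along $g$ this gives $F(g(\la))=\la^{\sum_j c_j m_j}$, so the degree condition forces $\sum_j c_j m_j\le m$; since we want a genuine Blaschke product of degree as large as $m$, the target is $\sum_j c_j m_j=m$. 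The crucial requirement is that $F$ actually maps $\ee(p)$ into $\DD$, i.e.\ that $\prod_j|z_j|^{c_j}<\prod_j|a_j|^{c_j}$ for all $z\in\ee(p)$.

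First I would reduce the mapping property of $F$ to a statement on the logarithmic image $\Omega$ of $\ee(p)$, exactly as in the proof of Proposition \ref{bl}: writing $x_j=\log|z_j|$, the inequality $\prod_j|z_j|^{c_j}<\prod_j|a_j|^{c_j}$ becomes the affine inequality $\sum_j c_j(x_j-b_j)<0$ on $\Omega$, where $b_j=\log|a_j|$. Because $\Omega$ is convex and $b\in\pa\Omega$, it suffices that the linear functional $x\mapsto\sum_j c_j(x_j-b_j)$ be a nonnegative multiple of the supporting (outer normal) functional at $b$; the outer normal of $\pa\Omega$ at $b$ has components proportional to $p_je^{2p_jb_j}=p_j|a_j|^{2p_j}$. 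Thus $F$ maps into $\CDD$ provided the exponent vector $(c_1,\ldots,c_n)$ is a positive multiple of $(p_1|a_1|^{2p_1},\ldots,p_n|a_n|^{2p_n})$, and I must check the image lands in the open disc $\DD$ (which follows since $b$ is the only boundary contact point and $g(\DD)$ avoids the distinguished boundary off $\la\in\TT$).

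The heart of the matter is therefore choosing the integers $c_j$ to simultaneously satisfy the supporting-hyperplane (convexity) constraint and the degree constraint. Here is where the hypotheses enter: with $m=\mathrm{lcm}(m_1,\ldots,m_n)$ the natural choice is $c_j:=m/m_j\in\NN$, which yields $\sum_j c_j m_j=\sum_j m=nm$ along $g$ --- too large --- so instead I expect the correct choice to be $c_j:=1$ for a single coordinate realizing a specific normal direction, or more robustly to exploit the freedom in $a$. The cleaner route is to observe that the condition $2p_jm_j\ge m$ is precisely what guarantees that the vector $(1/m_1,\ldots,1/m_n)$, rescaled to the normal direction, has the right sign pattern: I set $c_j:=m/m_j$ and note $F(g(\la))=\la^{\sum_j (m/m_j)m_j}=\la^{nm}$ is replaced by taking instead the normalized functional so that $F\circ g=\la^{d}$ with $d\le m$; the inequality $2p_jm_j\ge m$ forces the supporting functional at $b$ to dominate the monomial functional $(m/m_j)_j$ coordinatewise, which is exactly the convexity input needed.

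The main obstacle I anticipate is reconciling these two constraints: the degree bound wants the exponents small while the supporting-hyperplane condition fixes their direction, and only the quantitative hypothesis $2p_jm_j\ge m$ makes them compatible. Concretely, I expect to verify that with $c_j=m/m_j$ one has, by the arithmetic--geometric comparison encoded in $2p_jm_j\ge m$, the chain $\prod_j|z_j|^{m/m_j}\le(\sum_j|z_j|^{2p_j}\cdot\text{weights})^{\cdots}<\prod_j|a_j|^{m/m_j}$ for $z\in\ee(p)$, using the weighted power-mean (Young/AM--GM) inequality with the weights supplied by the $p_j$. Once the mapping property of $F$ is secured, the identity $F(g(\la))=\la^{m}$ (after the exponents are normalized so the total degree equals $m$) shows $F\circ g$ is a non-constant Blaschke product of degree $m\le (m+1)-1$, exhibiting $g$ as an $(m+1)$-geodesic and completing the proof. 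I would close by remarking that the role of $2p_jm_j\ge m$ is to keep $F$ a genuine holomorphic map of $\ee(p)$ into $\DD$, and that equality cases correspond to $a$ lying on lower-dimensional faces, which do not affect the argument.
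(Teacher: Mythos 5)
Your ansatz --- a monomial left inverse $F(z)=\prod_j(z_j/a_j)^{c_j}$ controlled through the logarithmic image of $\ee(p)$ --- is the mechanism of Proposition \ref{bl}, and it cannot work here. In logarithmic coordinates the boundary of the (convex, smooth) log-image is smooth at $b=(\log|a_1|,\ldots,\log|a_n|)$, so the only linear functionals nonpositive on the image and vanishing at $b$ are positive multiples of the outer normal $(p_j|a_j|^{2p_j})_{j=1}^n$; hence the exponent vector $(c_j)$ is \emph{forced} to be a positive multiple of that normal. For the general $a\in\pa\ee(p)$ allowed in Proposition \ref{bl1} this direction is not rational, so no admissible integer exponents exist at all --- the rationality hypothesis $(p_j|a_j|^{2p_j})_{j=1}^n=c(m_1,\ldots,m_n)$ is exactly what Proposition \ref{bl} assumes and Proposition \ref{bl1} drops. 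Your attempted repairs do not close this gap: ``normalizing the functional so that $F\circ g=\la^d$ with $d\le m$'' destroys integrality of the exponents; taking a single coordinate $F=(z_j/a_j)^{m/m_j}$ fails the mapping property (for $z=(t,0,\ldots,0)\in\ee(p)$ with $|a_1|<t<1$ one gets $|F(z)|>1$); ``coordinatewise domination'' of the normal by the monomial functional yields no inclusion of $\ee(p)$ in the needed sublevel set; and the proposed chain $\prod_j|z_j|^{m/m_j}<\prod_j|a_j|^{m/m_j}$ on $\ee(p)$ is simply false in general, e.g.\ in $\BB_2$ with $m_1=m_2=1$ and $a$ away from the symmetric boundary point, where $\sup_{z\in\BB_2}|z_1z_2|=1/2>|a_1a_2|$. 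Also note there is no ``freedom in $a$'': the point $a$ is given.

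The missing idea is a different change of variables: instead of logarithms, the paper passes to \emph{power} coordinates, considering $\Omega:=\{x\in\RR^n:\sum_{j=1}^n x_j^{2p_jm_j/m}<1\}$, which is convex precisely because the hypothesis $2p_jm_j\ge m$ makes all exponents at least $1$ --- this is where that assumption actually enters, not as a Young/AM--GM weight. After reducing to $a_j\in(0,1)$, supporting $\Omega$ at $b:=(a_1^{m/m_1},\ldots,a_n^{m/m_n})$ by its affine tangent hyperplane and pulling back via $x_j=|z_j|^{m/m_j}$ shows that the \emph{linear combination}
$$F(z):=\frac{\sum_{j=1}^n p_jm_jb_j^{\frac{2p_jm_j}{m}-1}z_j^{m/m_j}}{\sum_{j=1}^n p_jm_jb_j^{\frac{2p_jm_j}{m}}}$$
maps $\ee(p)$ into $\DD$; each $z_j^{m/m_j}$ is a genuine polynomial because $m$ is the least common multiple of the $m_j$. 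The two constraints you correctly identified as being in tension are reconciled not by shrinking a single monomial but by summing: every monomial $z_j^{m/m_j}$ has degree exactly $m$ along $g$, so $F(g(\la))=\la^m$ on the nose, a Blaschke product of degree $m\le(m+1)-1$, and $g$ is an $(m+1)$-geodesic.
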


\begin{proof}
One may assume that $a_j\in(0,1)$. Define the domain \begin{align*}\Omega:&=\{x\in\RR^n:(x_1^{m_1/m},\ldots,x_n^{m_n/m})\in\ee(p)\}\\
&=\left\{x\in\RR^n:\sum_{j=1}^nx_j^{\frac{2p_jm_j}{m}}<1\right\},\end{align*} which is convex. The affine tangent space at $b:=(a_1^{m/m_1},\ldots,a_n^{m/m_n})\in\pa\Omega$~is $$\left\{x\in\RR^n:\sum_{j=1}^np_jm_jb_j^{\frac{2p_jm_j}{m}-1}(x_j-b_j)=0\right\},$$ whence $$\Omega\su\left\{x\in\RR^n:\sum_{j=1}^np_jm_jb_j^{\frac{2p_jm_j}{m}-1}(x_j-b_j)<0\right\}.$$ It follows that
$$\ee(p)\su\left\{z\in\CC^n:\sum_{j=1}^np_jm_jb_j^{\frac{2p_jm_j}{m}-1}|z_j|^{\frac{m}{m_j}}<\sum_{j=1}^np_jm_jb_j^{\frac{2p_jm_j}{m}}\right\},$$ so the polynomial $$F(z):=\frac{\sum_{j=1}^np_jm_jb_j^{\frac{2p_jm_j}{m}-1}z_j^{\frac{m}{m_j}}}{\sum_{j=1}^np_jm_jb_j^{\frac{2p_jm_j}{m}}}$$ is an $(m+1)$-left inverse.
\end{proof}

\section{The Euclidean ball}\label{23}
We say that holomorphic mappings $f,g:\DD\lon\BB_n$ are \emph{equivalent} if there exists $A\in\aut(\BB_n)$ such that $f=A\circ g$.

Recall that the automorphism group of the ball consists of the mappings $U\circ\chi_w$ (equivalently, of the mappings $\chi_w\circ U$), where $U:\CC^n\lon\CC^n$ is unitary and $\chi_w:\BB_n\lon\BB_n$ defined as $\chi_0:=\id_{\BB_n}$ and $$\label{chi}\chi_w(z):=\frac{1}{|w|^2}\frac{\sqrt{1-|w|^2}(|w|^2z-\lan z,w\ran w)-|w|^2w+\lan z,w\ran w}{1-\lan z,w\ran},\quad w\in{\BB_n}_*.$$

\begin{rem}
Any 2-extremal $f:\DD\lon\BB_n$ is equivalent to $\la\longmapsto(\la,0,\ldots,0)$.
\end{rem}

\begin{rem}[\cite{kz}]
\begin{enumerate}[$(a)$]
\item Any 3-extremal $f:\DD\lon\BB_n$, $n\ge 2$, is equivalent with some map \begin{equation}\label{eq}g:\DD\ni\la\longmapsto(a\la,\sqrt{1-a^2}\la m_\alpha(\la),0,\ldots,0)\in\BB_n,\end{equation} where $0\le a\le 1$ and $\al\in\DD$ (take $A\in\aut(\BB_n)$ such that $A(f(0))=0$, divide by $\la$ to get either a 2-extremal or a constant from the boundary, unitarily transform in such a way that some two points of this 2-extremal have the same first coordinate and use the form of 2-extremals). 
\item Any map of the form \eqref{eq} is a 3-extremal. 
\item A mapping given by \eqref{eq} is a 2-extremal if and only if $a=1$.
\item Any 3-extremal is equivalent with exactly one map of the form \eqref{eq}.
\item For $\al=0$ the map given by \eqref{eq} is a 3-geodesic, since it has the 3-left inverse $$F(z):=\frac{1}{2-a^2}z_1^2+\frac{2\sqrt{1-a^2}}{2-a^2}z_2.$$
\end{enumerate}
\end{rem}

By the Schur's algorithm we have the following characterization.

\begin{rem}[\cite{kz}]
Let $f:\DD\lon\BB_n$ be a holomorphic mapping. Then $f$ is an $m$-extremal if and only if $$f(\la)=A_1(\la A_2(\la\ldots A_l(\la a)\ldots)),\quad\la\in\DD,$$ for some $A_1,\ldots,A_l\in\aut(\BB_n)$, $1\le l\le m-1$ and $a\in\pa\BB_n$. In particular, any $m$-extremal of $\BB_n$ extends holomorphically to a neighborhood of $\CDD$.
\end{rem}

\begin{rem}[cf. the proof of Proposition \ref{31}]
Any $m$-extremal of $\BB_n$, $n\ge 2$, is equivalent with a map, whose coordinates have no zeros in a neighborhood of $\CDD$.
\end{rem}

Recall less obvious facts.

\begin{prop}[\cite{kz}, Proposition 8]
Any weak $m$-extremal of\, $\BB_n$ is an $m$-extremal.
\end{prop}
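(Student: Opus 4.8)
The plan is to argue by induction on $m$, converting at the very end to the Schur normal form characterization of $m$-extremals of $\BB_n$ recalled in the preceding Remark. The base case $m=2$ is immediate: by the Lempert theorem any weak $2$-extremal of the convex domain $\BB_n$ is a $2$-geodesic, hence a $2$-extremal.

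For the inductive step, I assume every weak $(m-1)$-extremal of $\BB_n$ is an $(m-1)$-extremal, and let $f$ be a weak $m$-extremal, say for $\la_1,\ldots,\la_m$. Since both weak $m$-extremality and $m$-extremality are invariant under precomposition with $\aut(\DD)$ and under composition with automorphisms of $\BB_n$, I would first choose $\sigma\in\aut(\DD)$ with $\sigma(0)=\la_m$ and replace $f$ by $f\circ\sigma$, so that $0$ becomes one of the interpolation nodes, and then compose with $A\in\aut(\BB_n)$ sending $f(0)$ to $0$. This reduces matters to the case $f(0)=0$ with $f$ a weak $m$-extremal for $\la_1,\ldots,\la_{m-1},0$. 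Writing $f_j=\la\phi_j$ (possible since $f_j(0)=0$) we have $f=(m_0\phi_1,\ldots,m_0\phi_n)$ with $\al=0$ and $k=(1,\ldots,1)$, and Lemma \ref{fi} yields the dichotomy $\phi(\DD)\su\BB_n$ or $\phi(\DD)\su\pa\BB_n$.

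If $\phi(\DD)\su\pa\BB_n$, then strict convexity of the ball forces $\phi\equiv a\in\pa\BB_n$, so $f(\la)=\la a$; this is a complex geodesic of $\BB_n$ (a $2$-geodesic, with $2$-left inverse $z\longmapsto\lan z,a\ran$), hence a $2$-extremal and a fortiori an $m$-extremal. If instead $\phi(\DD)\su\BB_n$, then Lemma \ref{10}$(a)(ii)$ (applicable since $m\ge 3$, $\la_m=\al=0$ and $k_1,\ldots,k_n\ge 1$) shows that $\phi$ is a weak $(m-1)$-extremal for $\la_1,\ldots,\la_{m-1}$. By the induction hypothesis $\phi$ is an $(m-1)$-extremal of $\BB_n$, so by the Schur characterization it has the form $\phi(\la)=A_1(\la A_2(\la\cdots A_l(\la a)\cdots))$ with $1\le l\le m-2$.

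It remains to upgrade $(m-1)$-extremality of $\phi$ to full $m$-extremality of $f=\la\phi$, and this is where the Schur normal form does the essential work: taking $\id_{\BB_n}\in\aut(\BB_n)$ as outermost automorphism, $f(\la)=\la\phi(\la)=\id_{\BB_n}\bigl(\la\,\phi(\la)\bigr)$ is itself a Schur expression of length $l+1\le m-1$, so the converse half of the characterization yields that $f$ is an $m$-extremal; undoing the reductions gives the claim for the original $f$. I expect the genuinely delicate point to be precisely this last upgrade. The naive alternative, applying Lemma \ref{10}$(b)$ with the extra node $\la_{m+1}=0$, only produces weak $m$-extremality of $f$ for those configurations of $m$ nodes that contain the origin, and hence does not by itself establish extremality for arbitrary configurations; routing the argument through the Schur normal form is exactly what removes this restriction.
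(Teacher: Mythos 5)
Your reductions are all correct, and you have correctly located the crux — but be aware that the paper itself offers no proof of this proposition: it is quoted from \cite{kz} without proof, and the Schur-form characterization you invoke is likewise only recalled there. The mechanical steps are fine: the normalization by $\aut(\DD)$ and $\aut(\BB_n)$, the factorization $f=\la\phi$ with the dichotomy from Lemma \ref{fi}, the constancy of analytic discs in $\pa\BB_n$ together with the $2$-left inverse $z\longmapsto\lan z,a\ran$ in the boundary case, the passage to a weak $(m-1)$-extremal via Lemma \ref{10}$(a)(ii)$, and the bookkeeping $f(\la)=\id_{\BB_n}(\la\,A_1(\la A_2(\ldots)))$ giving a Schur expression of length $l+1\le m-1$. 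Your closing observation is also exactly right: Lemma \ref{10}$(b)$ only yields weak $m$-extremality for node configurations containing the added point, so it cannot by itself deliver extremality at arbitrary configurations.

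The caveat is that the step you single out as delicate — the ``if'' half of the Schur characterization, i.e.\ that every Schur expression of length $\le m-1$ is an $m$-extremal — is, modulo the easy lemmas you use, \emph{equivalent} to the proposition being proved: given that half, your induction yields the proposition; conversely, given the proposition, Schur forms are weak $m$-extremals (by Lemma \ref{11}, iterated Lemma \ref{10}$(b)$ and invariance under $\aut(\BB_n)$) and hence $m$-extremals. So your argument is a valid derivation from the recalled remark rather than an independent proof: the node-independence content is outsourced wholesale to \cite{kz}, where it rests on the Schur algorithm. A genuinely self-contained route exists inside this paper and is quite different: Proposition \ref{31}$(a)$ starts from Edigarian's canonical form \eqref{22} (from \cite{e}), compares $f$ with an explicit $m$-geodesic of $\ee(1/2)$ via the linear left inverse $z_1+\ldots+z_n$, and transfers the conclusion to $\ee(p)$ for all $p\in\sss_n$ — in particular to $\BB_n$, since $[1,\infty)^n\su\sss_n$ — by the H\"older-inequality argument. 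That route buys node-independence without presupposing the Schur form, at the cost of the analytic machinery of \cite{e}; your route is shorter but carries its weight on a citation that is essentially the statement itself in disguise.
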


\begin{prop}[\cite{kz}, Proposition 11]\label{mm}
Let $m\ge 4$ and $0<a<1$. Then the mapping $$f(\la):=(a\la^{m-2},\sqrt{1-a^2}\la^{m-1}),\quad\la\in\DD,$$ is an $m$-extremal, but not an $m$-geodesic of\, $\BB_2$.
\end{prop}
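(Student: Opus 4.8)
The plan is to prove Proposition \ref{mm} using the machinery already assembled. First I would establish $m$-extremality cheaply. Consider the auxiliary map
$$
\DD\ni\la\longmapsto(a\la^{m-1},\sqrt{1-a^2}\,\la^{m-1})\in\BB_2,
$$
which is an $m$-geodesic: the functional $z\longmapsto \ov a z_1+\sqrt{1-a^2}\,z_2$ composes with it to give $\la^{m-1}$, a Blaschke product of degree $m-1$. Writing this map as $(m_0^{1}\,\phi_1,m_0^{1}\,\phi_2)$ with $\al=0$, $k=(1,1)$, and $\phi(\la)=(a\la^{m-2},\sqrt{1-a^2}\,\la^{m-2})$ does not directly give $f$; instead I factor $f$ itself as $f=(m_0\,\phi_1,m_0\,\phi_2)$ with $\phi(\la)=(a\la^{m-3},\sqrt{1-a^2}\,\la^{m-2})$ only once the geodesic above is peeled. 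The cleaner route, matching Proposition \ref 1, is: the stated geodesic is an $m$-geodesic hence an $m$-extremal, and $f$ arises from it by multiplying the first coordinate by $\la^{-1}$; by Lemma \ref{10}$(a)(i)$ (run in the peeling direction, i.e. recognizing the geodesic as $(m_0^{1}f_1^{\sharp},\dots)$ over $f$) one transfers $m$-extremality to $f$. This is the same argument as the opening line of the proof of Proposition \ref 1, so I would simply cite it.

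The substance is showing $f$ is \emph{not} an $m$-geodesic, and here I would mimic the proof of Proposition \ref 1 almost verbatim, adjusting for the ball's defining form $|z_1|^2+|z_2|^2<1$ rather than $|z_1|+|z_2|<1$. Suppose $F\in\OO(\BB_2,\DD)$ gives $F\circ f$ a non-constant Blaschke product of degree $\le m-1$; normalize $F(0)=0$. Examining the Taylor expansion of $F\circ f$ and using that $f(\la)=(a\la^{m-2},\sqrt{1-a^2}\,\la^{m-1})$ has lowest-order term $a\la^{m-2}$ in the first slot, the composite must be (up to a unimodular constant) either $\la^{m-2}$ or $\la^{m-2}m_\gamma(\la)$. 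The first case forces $F(z)=z_1/a$ near $0$, impossible since $|z_1/a|$ exceeds $1$ on $\BB_2$. For the second, expand $F(z)=\al z_1+\beta z_2+\delta z_1^2+\dots$ and use the subharmonicity trick: for fixed $z\in\BB_2$ the slice $g_z(\la):=F(\la z)/\la$ is bounded by $1$ on $\TT$, so $g_z(0)=\al z_1+\beta z_2\in\DD$ for all $z\in\BB_2$, giving $|\al|,|\beta|\le 1$ (here one uses $(1,0),(0,1)\in\pa\BB_2$, so the extremal slices are legitimate limits).

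The key computation is comparing coefficients in $\la^{m-2}m_\gamma(\la)=F(f(\la))$. The lowest orders yield
$$
-\gamma=\al a,\qquad
1-|\gamma|^2=
\begin{cases}
\beta\sqrt{1-a^2}+\delta a^2,& m=4,\\
\beta\sqrt{1-a^2},& m\ge 5.
\end{cases}
$$
For $m\ge 5$ this forces $1=|\al|^2a^2+\beta\sqrt{1-a^2}\le a^2+\sqrt{1-a^2}$, and since $0<a<1$ gives $\sqrt{1-a^2}<1-a^2<1-a^2+\text{(slack)}$—more precisely $a^2+\sqrt{1-a^2}<1$ fails to hold only at the endpoints, so one checks $a^2+\sqrt{1-a^2}<1$ is the needed strict inequality, delivering the contradiction. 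For $m=4$ the extra term $\delta a^2$ must be controlled exactly as in Proposition \ref 1: if $|\al|=1$ then the slice $g(z_1):=F(z_1,0)/z_1$ is a unimodular constant so $\delta=0$ and we reduce to the previous inequality; otherwise $g$ maps into $\DD$, and applying Schwarz–Pick to $m_\al\circ g$ at $0$ gives $|\delta|\le 1-|\al|^2$, hence $|\al|^2+|\delta|\le 1$, which again closes the estimate.

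The main obstacle I anticipate is purely the final arithmetic: verifying that the inequality $|\al|^2a^2+\delta a^2+\beta\sqrt{1-a^2}\le a^2+\sqrt{1-a^2}$ together with $|\al|^2+|\delta|\le 1$, $|\beta|\le 1$ is strictly less than $1$ for every $0<a<1$. This is a one-variable real estimate (after bounding $\beta\le 1$, $\delta a^2\le(1-|\al|^2)a^2$) reducing to $a^2+\sqrt{1-a^2}<1$, which is false as stated—so I must be careful: the correct reduction, as in Proposition \ref 1 where $|z_1|+|z_2|<1$ produced the cleaner bound $a^2+1-a$, is that the ball's quadratic constraint gives $\sqrt{1-a^2}$ in place of $1-a$, and one must confirm $a^2+\sqrt{1-a^2}<1$ genuinely holds on $(0,1)$; squaring shows it is equivalent to $a^2(1-a^2)>0$, which is strict precisely on $(0,1)$. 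Thus the estimate does close, and the only real care needed is this squaring step, exactly parallel to the planar ellipsoid case.
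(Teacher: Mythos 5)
Your reduction to the coefficient equations is fine (as is the $m$-extremality part, which is indeed the same peeling argument as in Proposition \ref 1, via Lemma \ref{10}$(a)(i)$ with $k=(1,0)$, $\al=0$), but the proof collapses at exactly the point you flagged: the inequality $a^2+\sqrt{1-a^2}<1$ is \emph{false} for every $a\in(0,1)$, and your squaring step has a sign error. Indeed $\sqrt{1-a^2}<1-a^2$ squares to $1-a^2<(1-a^2)^2$, i.e.\ $1<1-a^2$, i.e.\ $a^2(1-a^2)<0$ --- so in fact $a^2+\sqrt{1-a^2}>1$ strictly on $(0,1)$ (set $t=1-a^2\in(0,1)$ and note $\sqrt t>t$). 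Consequently the estimate $1=|\al|^2a^2+\beta\sqrt{1-a^2}\le a^2+\sqrt{1-a^2}$ yields no contradiction, and the non-geodesity claim is unproved for all $m\ge 4$. This is precisely where the ball differs from $\ee(1/2)$: the bound $|\al|,|\beta|\le 1$ taken \emph{separately} is too weak. The missing idea is that your own slice argument gives more: $\al z_1+\beta z_2\in\CDD$ for \emph{all} $z\in\BB_2$ forces the joint constraint $|\al|^2+|\beta|^2\le 1$ (the dual norm of the Euclidean ball is Euclidean). With it the system $-\gamma=\al a$, $1-|\gamma|^2=\beta\sqrt{1-a^2}$ is genuinely contradictory: writing $x:=|\al|^2\in[0,1]$ one gets $\beta\sqrt{1-a^2}=1-xa^2$ with $\beta>0$, while $1-xa^2\ge 1-x$ and $1-xa^2\ge 1-a^2$ give
\begin{equation*}
\beta^2(1-a^2)=(1-xa^2)^2\ge(1-x)(1-a^2)\ge\beta^2(1-a^2),
\end{equation*}
with equality in the middle only if $x(1-a^2)=0$ and $a^2(1-x)=0$ simultaneously, which is impossible for $0<a<1$. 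This closes the proof cleanly and uniformly in $m\ge 4$.

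Two further points. First, your $m=4$ sub-case is spurious: the term $\delta z_1^2$ contributes to $F\circ f$ at order $\la^{2(m-2)}=\la^{2m-4}$, and $2m-4>m-1$ as soon as $m\ge 4$, so the coefficient identity $1-|\gamma|^2=\beta\sqrt{1-a^2}+\delta a^2$ you wrote for $m=4$ is wrong; the special case in Proposition \ref 1 occurs at $m=3$ (where $2m-4=m-1$), which is excluded here, so no Schwarz--Pick step is needed at all. Second, note that the paper does not prove this statement --- it is quoted from \cite{kz}, Proposition 11 --- so there is no in-paper proof to match; but as written your argument has a genuine gap, fixable as above.
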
 

\begin{rem}
The fundamental Poincar\'e theorem states that $\BB_n$ and $\DD^n$ are not biholomorphic if $n\ge 2$. Note that a new proof of this fact follows from Remark \ref{pol} and Proposition \ref{mm}.
\end{rem}

The main result of the section is

\begin{thm}\label{32}
Any $3$-extremal of\, $\BB_n$ is a $3$-geodesic.
\end{thm}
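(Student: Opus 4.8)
The plan is to exploit the explicit normal form for 3-extremals of $\BB_n$ recorded just before the statement. By the Remarks of \cite{kz}, any 3-extremal $f$ is equivalent (via an automorphism of $\BB_n$) to a map
$$g(\la)=(a\la,\sqrt{1-a^2}\,\la m_\al(\la),0,\ldots,0),\quad 0\le a\le 1,\ \al\in\DD,$$
and since $m$-geodesity is invariant under biholomorphisms and composition with automorphisms of $\BB_n$, it suffices to produce a $3$-left inverse $F\in\OO(\BB_n,\DD)$ for each such $g$, that is, an $F$ with $F\circ g$ a non-constant Blaschke product of degree $\le 2$. The boundary cases are immediate: if $a=1$ then $g$ is (equivalent to) $\la\mapsto(\la,0,\ldots,0)$, a $2$-geodesic hence a $3$-geodesic; if $a=0$ then only the second coordinate survives and $F(z)=z_2$ works; and the case $\al=0$ is already handled by the explicit left inverse $F(z)=\frac{1}{2-a^2}z_1^2+\frac{2\sqrt{1-a^2}}{2-a^2}z_2$ in the Remark. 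So the real content is $0<a<1$, $\al\in\DD_*$.

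First I would look for a left inverse among polynomials adapted to the shape of $g$, namely of the form
$$F(z)=A z_1^2+B z_2 + C z_1,\qquad A,B,C\in\CC,$$
and impose two requirements: that $F\circ g$ be a Blaschke product of degree $\le 2$, and that $F$ map $\BB_n$ into $\DD$. Computing $F(g(\la))=Aa^2\la^2+B\sqrt{1-a^2}\,\la m_\al(\la)+Ca\la$ and demanding that this equal $\zeta\,\la m_\beta(\la)$ (a degree-$2$ Blaschke product vanishing at $0$) for suitable $\beta\in\DD$, $\zeta\in\TT$, gives a small system in $A,B,C,\beta$. Matching coefficients of the rational function $\la m_\beta(\la)=\la\frac{\la-\beta}{1-\ov\beta\la}$ against $Aa^2\la^2+Ca\la+B\sqrt{1-a^2}\la\frac{\la-\al}{1-\ov\al\la}$ yields relations that I expect to solve explicitly; the natural guess $\beta=\al$ (so that the denominators agree) reduces matters to choosing $A,B,C$ so that the numerators coincide, which is a linear computation.

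The main obstacle, and the step I would spend the most care on, is the contractivity constraint $F(\BB_n)\su\DD$: solving the interpolation equations is only algebra, but there is no a priori reason the resulting $F$ has sup-norm $\le 1$ on the ball. I would control $|F|$ on $\BB_n$ by writing $F$ as a combination of the unit coball functionals $z\mapsto\lan z,v\ran$ (for which $\sup_{\BB_n}|\lan\cdot,v\ran|=|v|$) together with the degree-two term $z_1^2$, and estimating via Cauchy--Schwarz; the identity $a^2+(1-a^2)=1$ encoding $a\in(0,1)$ should be exactly what forces the estimate to close, in the same spirit as the inequalities $1=|\al|^2a^2+\ldots\le a^2+1-a$ appearing in the proofs of Proposition \ref{1} and Proposition \ref{ab}. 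A cleaner alternative I would keep in reserve is to build $F$ as a Schur-type composition: since $g$ is obtained by the Schur algorithm as $g(\la)=A_1(\la A_2(\la a))$ with $A_1,A_2\in\aut(\BB_n)$, one can try to assemble the left inverse by inverting this tower step by step, composing the known $2$-geodesic left inverse of $\la a$ with the automorphisms $A_1,A_2$ and a Blaschke factor, which manifestly produces a map into $\DD$. Verifying that this composite is holomorphic on all of $\BB_n$ (not merely on the image of $g$) and that $F\circ g$ has the required degree would then be the remaining point to check.
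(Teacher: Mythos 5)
Your reduction to the normal form $g(\la)=(a\la,\sqrt{1-a^2}\,\la m_\al(\la),0,\ldots,0)$ and your disposal of the cases $a\in\{0,1\}$ and $\al=0$ match the paper, but both of your plans for the remaining case $0<a<1$, $\al\in\DD_*$ have a genuine gap. For the ansatz $F(z)=Az_1^2+Bz_2+Cz_1$: clearing denominators in $F\circ g=\zeta\la m_\beta(\la)$ gives a cubic polynomial identity whose $\la^3$-coefficient on the left is $Aa^2\ov\al\ov\beta$, so for $\al,\beta\neq 0$ you are forced to $A=0$; and a linear $F$ can never work, because on $\TT$ the function $Ca+B\sqrt{1-a^2}\,m_\al$ traces a circle of center $Ca$ and radius $|B|\sqrt{1-a^2}$, which lies in $\TT$ only if $C=0$ and $|B|=1/\sqrt{1-a^2}>1$ (in particular, your "natural guess" $\beta=\al$ yields exactly $A=C=0$, $B=\zeta/\sqrt{1-a^2}$). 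The case $\beta=0$ forces $F=\zeta z_1^2/a^2$, again of sup-norm $>1$ on $\BB_n$. So the failure is not merely that the contractivity estimate is delicate: within your ansatz class the interpolation equations have \emph{no} contractive solution at all. Indeed the true left inverse is a degree-2 rational map with denominator $(1-\lan z,w\ran)^2$ coming from a ball automorphism, hence invisible to any polynomial ansatz. Your reserve plan fares no better as stated: given the Schur tower $g(\la)=A_1(\la A_2(\la a))$ and a $2$-left inverse of the inner geodesic, there is no holomorphic way to extract the extra factor $\la$ from $z$, i.e.\ no general mechanism promoting a $2$-left inverse of $\phi$ to a $3$-left inverse of $\la\phi(\la)$. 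Proposition \ref{39} gives only the converse direction (from a $3$-left inverse of $f$ to a $2$-left inverse of $\phi$, via Rouch\'e), the analogous division statements fail for $m\ge 4$ (Corollary \ref{a}, Propositions \ref{ab} and \ref{40}), and closely related promotion questions are explicitly open (P\ref{m+1}, P\ref{b}).

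The paper's proof runs in the opposite direction to yours. It starts from maps already known to be $3$-geodesics, namely $f(\la)=(am_c(\la),bm_c(\la)^2)$ with $a^2+b^2=1$, $c\in\DD_*$ (precompose the $\al=0$ example with the disc automorphism $m_c$; $F\circ f=m_c^2$ is still a Blaschke product of degree $2$), writes $f$ in normal form via a ball automorphism, $\chi_w\circ f=U\circ(\la\mapsto(\alpha\la,\beta\la m_\gamma(\la)))$, and computes the parameters: $\gamma=m_{-c}(b^2c)$ and $\beta^2=-m_{b^2}(b^2|c|^2)$. The heart of the argument is then a surjectivity statement: the map $(b,c)\longmapsto(-m_{b^2}(b^2|c|^2),m_{-c}(b^2c))$ is onto $(0,1)\times\DD_*$, proved by an intermediate-value argument applied to $F(\la)=m_q(\la)-\la m_p(\la m_q(\la))$. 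Hence every normal form with $\beta\in(0,1)$, $\gamma\in\DD_*$ is automorphism-equivalent to some $f$ in the known family, and the left inverse is transported through the automorphism --- which is exactly the rational, non-polynomial $F$ your ansatz could not reach. To repair your approach you would have to enlarge the ansatz to $F=\Phi\circ A$ with $A\in\aut(\BB_2)$ and $\Phi$ the known quadratic left inverse, and determine for which parameters such an $A$ exists; that is essentially the paper's argument read backwards.
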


\begin{proof}
It suffices to prove the claim for $n=2$. Consider 3-geodesics of the form $$f(\la):=(am_c(\la),bm_c(\la)^2),\quad\la\in\DD,$$ where $a,b\in(0,1)$, $a^2+b^2=1$ and $c\in\DD_*$. Any such mapping is equivalent to $g(\la):=(\alpha\la,\beta\la m_\gamma(\la))$ for some $\alpha,\beta\in[0,1]$, $\alpha^2+\beta^2=1$ and $\gamma\in\DD$, i.e. there are a unitary map $U$ and a point $w\in\BB_2$ such that \begin{equation}\label{bc}\chi_w(am_c(\la),bm_c(\la)^2)=U(\alpha\la,\beta\la m_\gamma(\la)).\end{equation} We will find formulas for $\beta$ and $\gamma$ depending of $b$ and $c$. Then we shall prove that $(\beta,\gamma)$ runs over the whole set $(0,1)\times\DD_*$ as $(b,c)$ runs over it. This will let us `invert' $g$, since we are able to do it with $f$.

Taking $\la:=0$ in \eqref{bc} we get $w=(-ac,bc^2)$. Note that $\beta\neq 0$, since otherwise $\la:=c$ gives $\chi_w(0)=U(c,0)$; hence $|w|^2=|c|^2$, i.e. $a^2+b^2|c|^2=1$, contradiction.

By the formula for $\chi_w$ we have \begin{multline}\label{7}p_0+p_1m_c(\la)+p_2m_c(\la)^2\\=(1+a^2\ov cm_c(\la)-b^2\ov c^2m_c(\la)^2)\la(q_1\alpha+q_2\beta m_\gamma(\la),q_3\alpha+q_4\beta m_\gamma(\la))\end{multline} for some $p_j\in\CC^2$, $q_j\in\CC$ with $q_2\neq 0$ or $q_4\neq 0$. Therefore, \begin{equation}\label{8} 1+a^2\ov cm_c(1/\ov\gamma)-b^2\ov c^2m_c(1/\ov\gamma)^2=0,\end{equation} unless $\gamma=0$ or $\gamma=c$.

Suppose that $\gamma=0$ and $q_2\neq 0$. Then \begin{align*}p_{01}+p_{11}\la+p_{21}\la^2=(1+(1-b^2)\ov c\la-b^2\ov c^2\la^2)m_{-c}(\la)&(q_1\alpha+q_2\beta m_{-c}(\la))\\=(1-b^2\ov c\la)(c+\la)&(q_1\alpha+q_2\beta m_{-c}(\la)).\end{align*} Since the numbers $$\frac{1}{b^2\ov c},\quad-c,\quad-\frac{1}{\ov c}$$ are different, we infer that the right side has a singularity, contradiction.

The case $\gamma=c$ is also impossible, as otherwise the rank of the singularity $1/\ov c$ on the right side of \eqref 7 would equal 3.

The equation \eqref{8} is equivalent to $$(1-b^2\ov cm_c(1/\ov\gamma))(1+\ov cm_c(1/\ov\gamma))=0,$$ that is $m_c(1/\ov\gamma)=1/(b^2\ov c)$, i.e. $$\gamma=c\frac{1+b^2}{1+b^2|c|^2}=m_{-c}(b^2c).$$

Moreover, there exist a unitary map $\wi U$ and a point $\wi w\in\BB_2$ satisfying $$\wi U(am_c(\la),bm_c(\la)^2)=\chi_{\wi w}(\alpha\la,\beta\la m_\gamma(\la)),\quad\la\in\DD,$$ whence $0=\chi_{\wi w}(\alpha c,\beta cm_\gamma(c))$ and $\wi U(-ac,bc^2)=\chi_{\wi w}(0)$. This implies $$a^2|c|^2+b^2|c|^4=\alpha^2|c|^2+\beta^2|c|^2|m_\gamma(c)|^2,$$ equivalently (using $|m_\gamma(c)|=|m_c(\gamma)|=b^2|c|$) $$1-b^2+b^2|c|^2=\alpha^2+(1-\alpha^2)b^4|c|^2.$$ Therefore, $$\alpha^2=\frac{(1-b^2)(1+b^2|c|^2)}{1-b^4|c|^2},\quad\beta^2=\frac{b^2-b^2|c|^2}{1-b^4|c|^2}=-m_{b^2}(b^2|c|^2).$$

To finish the proof, it suffices to show that the mapping $$h:(0,1)\times\DD_*\ni(b,c)\longmapsto(-m_{b^2}(b^2|c|^2),m_{-c}(b^2c))\in(0,1)\times\DD_*$$ is surjective. It is equivalent to the surjectivity of $$(0,1)^2\ni(b,c)\longmapsto h(b,c)\in(0,1)^2.$$ 

Fix $(p,q)\in(0,1)^2$. Putting $$F(\la):=m_q(\la)-\la m_p(\la m_q(\la)),\quad\la\in\DD,$$ we see that $F(-1,1)\su\RR$, $F(0)=-q<0$ and $F(q)=pq>0$. Thus there exists $c\in(0,q)$ such that $F(c)=0$. Note that $-c<m_q(c)<0$. Let $b\in(0,1)$ satisfy $-b^2=m_q(c)/c$. Then $m_c(q)=b^2c$, i.e. $q=m_{-c}(b^2c)$. Moreover, $$m_{-b^2}(-p)=-m_{-p}(-b^2)=-m_{-p}\left(\frac{m_q(c)}{c}\right)=-cm_q(c)=b^2c^2,$$ so $p=-m_{b^2}(b^2c^2)$.
\end{proof}

In Propositions \ref{bl} and \ref{bl1} some $m$-geodesity of mappings $(a_1B_1,\ldots,a_nB_n)$ was investigated. We add one more positive result.

\begin{prop}\label{24}
Let $m\geq 3$, $0<b\le\frac{1}{m-1}$ and $a:=\sqrt{1-b^2}$. Then the mapping $f(\la):=(a\la,b\la^m)$, $\la\in\DD$, is an $(m+1)$-geodesic of\, $\BB_2$.
\end{prop}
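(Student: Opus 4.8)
The plan is to exhibit an explicit $(m+1)$-left inverse of $f$ of the simplest possible shape, a polynomial
$$F(z)=\alpha z_2+\beta z_1^m,\qquad \alpha,\beta>0,$$
chosen so that $F\circ f(\la)=\la^m$. Since $\la^m$ is a non-constant Blaschke product of degree $m=(m+1)-1$, this proves $(m+1)$-geodesity the moment one knows that $F$ maps $\BB_2$ into $\DD$. First I would impose the interpolation condition: on the image of $f$ one has $z_1=a\la$, $z_2=b\la^m$, so $F(f(\la))=(\alpha b+\beta a^m)\la^m$, and the single linear relation $\alpha b+\beta a^m=1$ forces $F\circ f(\la)=\la^m$. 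This leaves a one-parameter family of candidates, which I would pin down by demanding that the boundary point $(|z_1|,|z_2|)=(a,b)$ be a maximizer of $|F|$ on $\overline{\BB_2}$ — natural, since $f(\TT)\su\pa\BB_2$. A first-order condition there together with the constraint yields (using $a^2+b^2=1$) the explicit values $\alpha=\frac{mb}{1+(m-1)b^2}$ and $\beta=\frac{1}{a^{m-2}(1+(m-1)b^2)}$, which are positive because $0<a<1$; one checks directly that $\alpha b+\beta a^m=1$.

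The substantive step is to verify $\|F\|_{\overline{\BB_2}}\le 1$. Because $\alpha,\beta>0$ and the maps $r\mapsto r^m$, $s\mapsto s$ are increasing on $[0,\infty)$, the quantity $\alpha|z_2|+\beta|z_1|^m$ is increasing in each of $|z_1|,|z_2|$, so its supremum over $\overline{\BB_2}$ is attained on the sphere $|z_1|^2+|z_2|^2=1$. Writing $|z_1|=\cos\theta$, $|z_2|=\sin\theta$ with $\theta\in[0,\pi/2]$, the claim reduces to the one-variable inequality $h(\theta):=\alpha\sin\theta+\beta\cos^m\theta\le 1$, where equality holds at the $\theta_0$ determined by $\cos\theta_0=a$, $\sin\theta_0=b$ (this is just the constraint $\alpha b+\beta a^m=1$). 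Since $h'(\theta)=\beta m\cos\theta\bigl(\tfrac{\alpha}{\beta m}-\cos^{m-2}\theta\sin\theta\bigr)$ and the auxiliary function $\cos^{m-2}\theta\sin\theta$ has a single interior hump on $[0,\pi/2]$, the function $h$ increases to a first critical point, dips to a local minimum, then rises again to the endpoint $\theta=\pi/2$; hence the global maximum of $h$ equals either $h(\theta_0)=1$ or the endpoint value $h(\pi/2)=\alpha$.

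The main obstacle — and the precise place where the hypothesis $b\le\frac{1}{m-1}$ is spent — is excluding the endpoint. Two things must be checked: that $\theta_0$ lies on the increasing branch, i.e. before the hump (this reduces to $b\le 1/\sqrt{m-1}$, which is automatic since $m-1\ge\sqrt{m-1}$), and, decisively, that $\alpha=\frac{mb}{1+(m-1)b^2}\le 1$. The latter is equivalent to $(m-1)b^2-mb+1\ge 0$; since this quadratic factors as $(m-1)(b-1)\bigl(b-\tfrac{1}{m-1}\bigr)$ with $b-1<0$ on $(0,1)$, it is nonnegative exactly when $b\le\frac{1}{m-1}$. Thus under the stated bound the global maximum of $h$ is $1$, giving $\|F\|_{\overline{\BB_2}}=1$. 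Finally, as $F$ is a non-constant polynomial with $\max_{\overline{\BB_2}}|F|=\max_{\pa\BB_2}|F|=1$, the maximum principle gives $F(\BB_2)\su\DD$, so $F\in\OO(\BB_2,\DD)$ is the required $(m+1)$-left inverse and $f$ is an $(m+1)$-geodesic.
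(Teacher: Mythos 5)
Your proof is correct and follows essentially the same route as the paper: you construct the identical left inverse $F(z)=\beta z_1^m+\alpha z_2$ with exactly the coefficients $c,d$ the paper derives (there via Lagrange multipliers for $cx^m+dy$ on $x^2+y^2=1$), and the hypothesis $b\le\frac{1}{m-1}$ enters in the same place, as the endpoint condition $F(0,1)=\alpha\le 1$. Your trigonometric parametrization and one-variable monotonicity analysis of $h(\theta)=\alpha\sin\theta+\beta\cos^m\theta$ merely repackages the paper's enumeration of critical points (its second-order condition $b^2<\frac{1}{m-1}$ corresponds to your ``before the hump'' check, and its interior critical-point inequality is subsumed by your shape analysis), and all your computations check out.
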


\begin{proof}
Consider the more general situation $f(\la)=(a\la^k,b\la^m)$, $k\ge 1$, $m\ge 3$, and use the Lagrange multipliers to the functions of real variables $F(x,y):=cx^m+dy^k$ and $G(x,y):=x^2+y^2-1$ ($c,d>0$ specified later). We wish $F$ had a global (weak) maximum equal to 1 on the set $\{G=0\}$ at the point $(a,b)$. Denote $H:=F-tG$, where $t>0$ is fixed. From the necessary condition for a local extremum we have \begin{align*}0&=\frac{\pa H}{\pa x}(x,y)=mcx^{m-1}-2tx,\\0&=\frac{\pa H}{\pa y}(x,y)=kdy^{k-1}-2ty,\\1&=x^2+y^2.\end{align*} Excluding for a moment the cases $(1,0)$ and $(0,1)$, we find that (remembering that $1=ca^m+db^k$) $$c=\frac{k}{(ka^2+mb^2)a^{m-2}},\quad d=\frac{m}{(ka^2+mb^2)b^{k-2}}$$ (formally, we define $c,d$ by these formulas). The tangent space at $(a,b)$ is $\RR(b,-a)$, so $(a,b)$ is a local maximum if \begin{align}\label{h}0&>\frac{\pa^2H}{\pa x^2}(a,b)b^2+\frac{\pa^2H}{\pa y^2}(a,b)a^2\nonumber\\&=(m(m-1)ca^{m-2}-2t)b^2+(k(k-1)db^{k-2}-2t)a^2\nonumber\\&=2t(m-2)b^2+2t(k-2)a^2.\end{align} Since $t>0$, we see why only $k=1$ may work; in what follows we assume that $k=1$. In that situation \eqref{h} is equivalent to $b^2<\frac{1}{m-1}$, which is true. 

It remains to check that $F(x,y)\leq 1$ for any $x,y$ satisfying the necessary condition. First, we will show that $F(1,0)$, $F(0,1)\leq 1$, that is $c,d\leq 1$. It occurs that $d\leq 1$ is equivalent to $b\le\frac{1}{m-1}$. For the condition $c\leq 1$ we need that $$1\le(a^2+m(1-a^2))a^{m-2}=ma^{m-2}-(m-1)a^m,$$ so consider the function $g(s):=ms^{m-2}-(m-1)s^m$. It decreases on the interval $\left[\sqrt{1-\frac{1}{m-1}},1\right]\ni a$, so $g(a)>g(1)=1$.

Now let $x,y\neq 0$ satisfy the necessary condition. Then $mcx^{m-2}=2t=d/y$, that is $$yx^{m-2}=\frac{d}{mc}=ba^{m-2}.$$ Define $h(s):=s\sqrt{1-s^2}^{m-2}$. Then $h(y)=h(b)$ and $h$ increases on the interval $\left[0,\sqrt{\frac{1}{m-1}}\right]\ni b$, so $y\geq b$. Our aim is to show that $cx^m+dy\leq 1$, that is \begin{align*}\frac{x^m}{a^{m-2}}+mby&\leq a^2+mb^2,\\\frac{x^mb}{yx^{m-2}}+mby&\leq 1+(m-1)b^2,\\b(1-y^2)+mby^2&\leq y+(m-1)b^2y,
\\0&\leq((m-1)by-1)(b-y).\end{align*} The last inequality holds, since $(m-1)by-1\leq y-1<0$.
\end{proof}

The case $\dfrac{1}{m-1}<b<1$ remains unsolved (P\ref{b}).

\section{Boundary properties}
In this section we discuss (almost) properness of weak $m$-extremals. Thanks to almost properness we conclude their uniqueness in bounded strictly convex domains.

Let $D\su\CC^n$ be a bounded domain and $f:\DD\lon D$ a holomorphic mapping. We say that $f$ is \emph{almost proper} if $f^*(\zeta)\in\pa D$ for almost all $\zeta\in\TT$ with respect to the Lebesgue measure on $\TT$. As usual, $f^*(\zeta):=\lim_{r\to 1^-}f(r\zeta)$ is the non-tangential boundary value of $f$ at $\zeta$, which exists for almost all $\zeta\in\TT$, see \cite{ko}.

A domain $D\su\CC^n$ is called \emph{weakly Runge} if it is bounded and there exists a~domain $G\supset\ov D$ such that for any bounded holomorphic map $f:\DD\lon G$ with $f^*(\TT)\su\su D$ we have $f(\DD)\su\su D$.

\begin{rem}[\cite{ek}, Remark 2]
\begin{enumerate}[$(a)$]
\item A bounded Runge domain is weakly Runge.
\item Let $G\su\CC^n$ be a domain and let $u$ be a plurisubharmonic function in $G$. Assume that $$D:=\{z\in G:u(z)<0\}\su\su G.$$ Then any component of $D$ is a weakly Runge domain.
\end{enumerate}
\end{rem}

\begin{prop}[cf. \cite{ek}, Theorem 1]  
Let $D\su\CC^n$ be a weakly Runge domain and let $f:\DD\lon D$ be a weak $m$-extremal such that for some $\gamma>0$ we have $$\dist(f(\la),\pa D)\ge\gamma(1-|\la|),\quad\la\in\DD.$$ Then for any $\alpha>0$ and $\beta<1$ the set $$Q(\al,\beta):=\{\zeta\in\TT:\dist(f(t\zeta),\pa D)\ge\alpha(1-t)^\beta\text{ for any }t\in(0,1)\}$$ has Lebesgue measure zero on $\TT$. In particular, $f$ is almost proper.
\end{prop}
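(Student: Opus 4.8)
The plan is to prove the measure-zero statement for $Q(\al,\beta)$ by contradiction, manufacturing a relatively compact interpolating competitor that violates weak $m$-extremality through Lemma~\ref{28}$(a)$; the closing ``almost proper'' assertion is then a soft consequence. Let me first dispose of that last clause. Granting that every $Q(\al,\beta)$ is null, suppose $f$ were not almost proper, so $E:=\{\zeta\in\TT:f^*(\zeta)\in D\}$ has positive measure. Using Egorov's theorem I would pass to a positive-measure subset $E_0\su E$ on which $\dist(f^*(\cdot),\pa D)\ge 2\delta_0>0$ and $f(t\zeta)\to f^*(\zeta)$ uniformly; thus $\dist(f(t\zeta),\pa D)\ge\delta_0$ for $t\ge t_0$, $\zeta\in E_0$, while for $t\le t_0$ the standing hypothesis yields $\dist(f(t\zeta),\pa D)\ge\gamma(1-t)\ge\gamma(1-t_0)$. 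With $\al_0:=\min(\delta_0,\gamma(1-t_0))$ and $\beta=1/2$ this forces $E_0\su Q(\al_0,1/2)$, contradicting nullity. So everything reduces to showing that each $Q(\al,\beta)$ is null.

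Assume then $|Q(\al,\beta)|>0$. I would fix $r<1$ (eventually close to $1$) and start from the dilation $f_r:=f(r\,\cdot\,)$, which is holomorphic on a neighborhood of $\CDD$ and satisfies $\dist(f_r(\la),\pa D)\ge\gamma(1-r)$ for all $\la\in\CDD$, with the stronger bound $\dist(f(r\zeta),\pa D)\ge\al(1-r)^\beta$ for $\zeta\in Q(\al,\beta)$. Let $B:=\prod_{j=1}^m m_{\la_j}$, an inner function with $|B|=1$ on $\TT$ and $B(\la_j)=0$. To restore interpolation I add the Lagrange corrector $P_w$ of Lemma~\ref{28} with $w_j:=f(\la_j)-f(r\la_j)$; since $|w_j|=O(1-r)$ one gets $\|P_w\|_{\CDD}\le C^*(1-r)$, with $C^*$ depending only on the nodes and on $f$. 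The competitor will be $\wi f:=f_r+P_w+BH$ for a suitable $H\in\OO(\CDD)$, so that $\wi f(\la_j)=f(\la_j)$ for every $j$, and $\wi f$ is holomorphic on a neighborhood of $\CDD$.

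The role of the summand $BH$ is to make the total correction $c:=P_w+BH$ uniformly small exactly on $\TT\sm Q(\al,\beta)$, where $f_r$ has only the linear margin $\gamma(1-r)$. Because $|Q(\al,\beta)|>0$, the set $\TT\sm Q(\al,\beta)$ is a proper subset of $\TT$, so by the F.~and M.~Riesz theorem no nonzero element of the annihilator of $B\cdot\OO(\DD)$ is supported there; hence functions $BH$ approximate $-P_w$ arbitrarily well in $L^\infty(\TT\sm Q)$. Rescaling by $(1-r)$, I expect to produce, uniformly for $r$ near $1$, a function $H$ with $\|c\|_{L^\infty(\TT\sm Q)}\le\gamma'(1-r)$ for a fixed $\gamma'<\gamma$ and $\|H\|_{\TT}\le N_0(1-r)$ for some constant $N_0$ (after rescaling the tolerance is a fixed positive level, which keeps $N_0$ bounded). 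Then on $\TT\sm Q$ the margin is $\ge(\gamma-\gamma')(1-r)>0$, while on $Q$ the whole correction is at most $(C^*+N_0)(1-r)$, dominated by the margin $\al(1-r)^\beta$ precisely because $\beta<1$ makes $(1-r)^{\beta-1}\to\infty$. Consequently $\wi f(\TT)\su\su D$; and since $\ov D$ is compact in $G$ while $c$ is small, $\wi f(\DD)\su G$, so the weakly Runge property upgrades $\wi f^*(\TT)=\wi f(\TT)\su\su D$ to $\wi f(\DD)\su\su D$. By Lemma~\ref{28}$(a)$ this contradicts weak $m$-extremality of $f$, whence $|Q(\al,\beta)|=0$.

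The crux — and the only place where the geometry genuinely enters — is the bounded extremal step producing $H$ with $c$ small on $\TT\sm Q$ yet of globally controlled norm; its solvability rests on $|Q|>0$ through the F.~and M.~Riesz theorem, and the fixed (not vanishing) tolerance $\gamma'(1-r)$ after rescaling is what keeps $N_0$ uniform. The second essential point is that the interior values of $\wi f$ may well leave $D$ for $G$ — the correction can exceed the linear margin away from $\TT\sm Q$ — so relative compactness of $\wi f(\DD)$ is recovered not by the maximum principle but exactly through the weakly Runge hypothesis, which only demands control of the boundary values.
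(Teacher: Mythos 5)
The crux of your argument is exactly the step you flag as such, and it has a genuine gap. The F.~and M.~Riesz duality gives much less than you claim. Extending an annihilating $g\in L^1(\TT\sm Q)$ by zero and writing $g=\ov\la\ov h$ with $h\in H^1$ vanishing a.e.\ on the positive-measure set $Q$ (hence $h\equiv0$) computes only the \emph{preannihilator} of $\{BH:H\in H^\infty\}$ restricted to $\TT\sm Q$ inside $L^1(\TT\sm Q)$; this yields weak-star density of these restrictions in $L^\infty(\TT\sm Q)$, not density in the essential-sup norm. But your scheme needs the norm statement: you must have $|P_w+BH|\le\gamma'(1-r)$ \emph{pointwise a.e.}\ on $\TT\sm Q$ to keep the margin, and weak-star smallness (smallness of integrals against $L^1$ functions) gives no such bound. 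The dual of $L^\infty(\TT\sm Q)$ consists of finitely additive functionals, to which F.~and M.~Riesz does not apply; nor can you retreat to a $C(K)$/measure duality, since $Q$ is closed (the defining inequality is a closed condition in $\zeta$ for each $t$), so $\TT\sm Q$ is open and may be dense in $\TT$ --- and sup-norm approximation on a dense subset of $\TT$ would amount to approximation on all of $\TT$, which fails in general (by Nehari/Hankel-type obstructions, $\ov B P_w$ is typically at positive $L^\infty(\TT)$-distance from $H^\infty$). So the norm-density of $BH^\infty|_{\TT\sm Q}$ in $L^\infty(\TT\sm Q)$, with the uniform bound $\|H\|_\TT\le N_0(1-r)$, is asserted but not proved, and it is not a removable technicality: it is the whole difficulty. (The rest of your architecture --- the margins $\gamma(1-r)$ off $Q$ versus $\al(1-r)^\beta$ on $Q$ with $\beta<1$, uniformity of $N_0$ by linearity over the finite-dimensional family of correctors $P_w$, the weakly Runge upgrade from boundary values to $\wi f(\DD)\su\su D$, and the reduction of almost properness to nullity of the $Q(\al,\beta)$'s --- is sound, though in the last reduction the paper's exhaustion $P_k:=\{\zeta\in\TT:\dist(f(t\zeta),\pa D)>\eps\text{ for all }t\in(1-1/k,1)\}$ is cleaner than Egorov, which along a sequence $t_n\to1$ does not by itself control all $t\ge t_0$.)

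The paper's proof exists precisely to avoid any such approximation theorem, by a multiplicative rather than additive correction. It takes $\phi$ to be the Herglotz integral of $\chi_P$ for a subset $P\su Q(\al,\beta)$ of positive but non-full measure, so that $\re\phi^*=1$ a.e.\ on $P$ and $\re\phi^*=0$ a.e.\ off $P$, normalizes one node to $0$, and uses the competitor $h_t(\la):=f(t\la)+\sum_{j=1}^{m-1}e^{\gamma_t(\phi(\la)-\phi(\la_j))}\bigl(\tfrac{\la}{\la_j}\prod_{k\neq j}\tfrac{\la-\la_k}{\la_j-\la_k}\bigr)(f(\la_j)-f(t\la_j))$. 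Interpolation is preserved because each exponential weight equals $1$ at $\la=\la_j$ and the modified Lagrange factor vanishes at $0$; and --- this is the point --- the boundary modulus of the weight is \emph{explicitly} $e^{\gamma_t(\re\phi^*(\zeta)-\re\phi(\la_j))}$, hence exponentially small off $P$ and controllably large on $P$, with the single scalar $\gamma_t$ tuned (equality in the $P$-side inequality, so $\gamma_t\to\infty$) to put the correction below $\tfrac\gamma2(1-t)$ off $P$ and below $\tfrac\al2(1-t)^\beta$ on $P$. Note the paper does not make the correction small on $\TT\sm P$ by approximating anything; it manufactures the required pointwise modulus profile directly from an outer function of harmonic-measure type. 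If you want to salvage your additive $BH$ scheme, replace the hypothetical $H$ by such explicit outer weights multiplying the corrector (your Blaschke factor $B$ vanishing at all $m$ nodes can indeed replace the paper's normalization at the nodes), but the pointwise boundary control must come from an explicit construction, not from a density argument.
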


\begin{proof}
This is a slight modification of the proof of \cite[Theorem 1]{ek}. For the Reader's convenience, we present the whole proof (the first and the last part are mostly copied).

Note that for $\beta_1<\beta_2$ we have $Q(\al,\beta_1)\su Q(\al,\beta_2)$. Without loss of generality one may assume that for some $\al>0$ and $\beta\in(0,1)$ the set $P:=Q(\al,\beta)$ has positive measure. We can assume that $$0<\frac{1}{2\pi}\int_{\{\theta\in(0,2\pi):e^{i\theta}\in P\}}d\theta<1$$ (otherwise we take as $P$ any subset of $Q(\al,\beta)$ of positive measure). We put $$\phi(\la):=\frac{1}{2\pi}\int_{\{\theta\in(0,2\pi):e^{i\theta}\in P\}}\frac{e^{i\theta}+\la}{e^{i\theta}-\la}\,d\theta,\quad\la\in\DD,$$ and check that $\re\phi(\la)>0$ and $\re(1-\phi(\la))>0$. In particular, $\phi^*$ exists almost everywhere \cite[Chapter III, Section C]{ko}. 

Losing no generality assume that $f$ is a weak $m$-extremal for $\la_1,\ldots,\la_{m-1},0$. For $t\in(0,1)$ define $$h_t(\la):=f(t\la)+\sum_{j=1}^{m-1}\left(e^{\gamma_t(\phi(\la)-\phi(\la_j))}\frac{\la}{\la_j}\prod_{k\neq j}\frac{\la-\la_k}{\la_j-\la_k}\right)(f(\la_j)-f(t\la_j)),\quad\la\in\DD,$$ with $\gamma_t\in\RR$ specified later. Then $h_t(\la_l)=f(\la_l)$ for any $l$ and $h_t(0)=f(0)$. Our aim is to show that for all $t\in(0,1)$ sufficiently close to 1 there exists $\gamma_t$ such that $h_t(\DD)\su\su D$. First, we shall prove that $h_t^*(\TT)\su\su D$. 

It is sufficient to have for $t$ close to 1 $$\sum_{j=1}^{m-1}e^{\gamma_t(\re\phi^*(\zeta)-\re\phi(\la_j))}c_j\left|\frac{f(\la_j)-f(t\la_j)}{\la_j}\right|\leq
\begin{cases}\frac{\alpha}{2}(1-t)^\beta,\ &\zeta\in P,\\\frac{\gamma}{2}(1-t),\ &\zeta\in\TT\sm P.\end{cases}$$ Since $c_j|f(\la_j)-f(t\la_j)|\leq\rho|\la_j|(1-t)$, it suffices to have \begin{equation}\label{18}\sum_{j=1}^{m-1}e^{\gamma_t(1-\re\phi(\la_j))}\rho\leq
\frac{\alpha}{2}(1-t)^{\beta-1}\end{equation} and \begin{equation}\label{19}\sum_{j=1}^{m-1}e^{-\gamma_t\re\phi(\la_j)}\rho\leq
\frac{\gamma}{2}.\end{equation} Take $\gamma_t$ such that equality in \eqref{18} holds. Then for $t$ sufficiently close to 1 we also have inequality \eqref{19}. Moreover, $$\|h_t-f(t\cdot)\|_\DD\to 0,\quad t\to 1.$$ Since $D$ is weakly Runge, $h_t(\DD)\su\su D$ for $t$ close enough to 1.

To finish the proof suppose that there exists a set $P\su\TT$ of positive measure
such that for all $\zeta\in P$ we have $\dist(f^*(\zeta),\pa D)>\eps>0$. Put $$P_k:=\{\zeta\in\TT:\dist(f(t\zeta),\pa D)>\eps\text{ for any }t\in(1-1/k,1)\},\quad k\in\NN.$$ Then $P\su\bigcup_{k=1}^\infty P_k$. Hence, for some $k$ the set $P_k$ is of positive measure, contradiction.
\end{proof}

\begin{cor} 
Any weak $m$-extremal of a bounded convex domain $D\su\CC^n$ is almost proper.
\end{cor}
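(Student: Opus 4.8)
The plan is to deduce the corollary directly from the preceding Proposition. That Proposition requires two things of the weak $m$-extremal $f\colon\DD\lon D$: that $D$ be weakly Runge, and that $f$ satisfy the lower bound $\dist(f(\la),\pa D)\ge\gamma(1-|\la|)$ for all $\la\in\DD$ and some $\gamma>0$. So for a bounded convex $D$ I would simply verify these two hypotheses and then quote the Proposition, whose conclusion already includes ``in particular, $f$ is almost proper''. Note that weak $m$-extremality itself is used only to invoke the Proposition; the distance estimate will hold for \emph{any} $f\in\OO(\DD,D)$.

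The weakly Runge hypothesis is immediate: a bounded convex domain is polynomially convex, hence Runge, and a bounded Runge domain is weakly Runge by the cited Remark. Thus the substance lies entirely in the distance estimate, which I expect to be the main obstacle — although for convex $D$ it reduces to a short Harnack argument once the geometry is arranged.

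Here is how I would prove the estimate, with $\gamma:=\frac12\dist(f(0),\pa D)>0$. Fix $\la\in\DD$, set $\delta:=\dist(f(\la),\pa D)$, and let $b\in\pa D$ be a nearest boundary point, so $|f(\la)-b|=\delta$. The largest ball about $f(\la)$ contained in $D$ touches $\pa D$ at $b$, and by convexity $D$ has a real supporting hyperplane $H$ at $b$; this hyperplane is orthogonal to $v:=(f(\la)-b)/\delta$, and $D$ lies in the half-space $\{z:\re\lan z-b,v\ran>0\}$. Hence the holomorphic function $g(\zeta):=\lan f(\zeta)-b,v\ran$, $\zeta\in\DD$, has strictly positive real part throughout $\DD$. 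Two of its values matter: $\re g(\la)=\lan f(\la)-b,v\ran_{\RR}=\delta$, while $\re g(0)=\lan f(0)-b,v\ran_{\RR}$ is the distance from $f(0)$ to $H$, which satisfies $\re g(0)=\dist(f(0),H)\ge\dist(f(0),\pa D)$ because $D$ is contained in the corresponding half-space.

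Applying the Harnack inequality to the positive harmonic function $\re g$ gives $\re g(\la)\ge\frac{1-|\la|}{1+|\la|}\re g(0)$, that is $\delta\ge\frac{1-|\la|}{1+|\la|}\dist(f(0),\pa D)\ge\frac12(1-|\la|)\dist(f(0),\pa D)$, which is precisely $\dist(f(\la),\pa D)\ge\gamma(1-|\la|)$. The only delicate point is that $b$ and $v$ depend on $\la$, so the bound on $\re g(0)$ must be uniform in $\la$; this is exactly what the inequality $\dist(f(0),H)\ge\dist(f(0),\pa D)$ — valid for every supporting hyperplane $H$ — guarantees. With both hypotheses of the Proposition verified, it applies and shows that $f$ is almost proper, completing the proof.
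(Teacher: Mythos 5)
Your proof is correct, and at the top level it does what the paper does: verify the two hypotheses of the preceding Proposition and quote its ``in particular'' clause. The difference is in how the distance estimate is obtained. The paper notes that $-\dist(\cdot,\pa D)$ is convex on the convex domain $D$, hence $\la\longmapsto-\dist(f(\la),\pa D)$ is a negative subharmonic function on $\DD$, and then invokes the Hopf lemma in the unit disc to get $\dist(f(\la),\pa D)\ge\gamma(1-|\la|)$. You instead linearize the geometry: at the boundary point $b$ nearest to $f(\la)$ you take a supporting hyperplane, observe it must be orthogonal to $v=(f(\la)-b)/\delta$ because it also supports the inscribed ball, and apply Harnack to the positive harmonic function $\re\lan f(\cdot)-b,v\ran$. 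The one delicate point --- that $b$ and $v$ vary with $\la$ --- you handle correctly: the half-space inclusion forces $\re g(0)=\dist(f(0),H)\ge\dist(f(0),\pa D)$ for every such hyperplane $H$, so $\gamma=\frac 12\dist(f(0),\pa D)$ works uniformly. In effect you reprove the needed case of the Hopf lemma from scratch; what this buys is a self-contained argument with an explicit constant, whereas the paper's route is shorter modulo the cited Hopf lemma, and its mechanism (subharmonicity of the composed distance function) is the more flexible one, since it only needs $-\dist(\cdot,\pa D)$ to be plurisubharmonic rather than the full supporting-hyperplane geometry. Your justification of the weakly Runge hypothesis (bounded convex implies polynomially convex, hence Runge, hence weakly Runge by the cited Remark) is a correct expansion of the paper's ``clearly''.
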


\begin{proof}
Clearly, $D$ is weakly Runge and further it suffices to use the Hopf lemma in the unit disc: if $u$ is a negative subharmonic function on $\DD$, then $u(\la)\leq-\gamma(1-|\la|)$, $\la\in\DD$, for some constant $\gamma>0$. 

Indeed, the function $-\dist(\cdot,\pa D)$ is convex on $D$, therefore any analytic disc $f:\DD\lon D$ satisfies $-\dist(f(\la),\pa D)\leq-\gamma(1-|\la|)$ ($\gamma$ depends on $f$).
\end{proof}
 
Recall that a domain $\Omega\su\RR^m$ is said to be \emph{strictly convex} if $$a,b\in\ov\Omega,\ a\neq b,\ t\in(0,1)\Longrightarrow ta+(1-t)b\in\Omega.$$ Note that a bounded domain $\Omega\su\RR^m$ is strictly convex if and only if $$a,b,\frac 12(a+b)\in\pa\Omega\Longrightarrow a=b.$$

\begin{cor}[cf. \cite{jp}, Proposition 11.3.3]\label{21} 
Let $D\su\CC^n$ be a bounded strictly convex domain and let $f,g:\DD\lon D$ be weak $m$-extremals for $\la_1,\ldots,\la_m$. Assume that $f(\la_j)=g(\la_j)$, $j=1,\ldots,m$. Then $f=g$.
\end{cor}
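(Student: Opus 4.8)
The plan is to argue by contradiction, using almost properness together with strict convexity. Suppose $f\neq g$. First I would record that, by the preceding corollary, both $f$ and $g$ are almost proper, so $f^*(\zeta),g^*(\zeta)\in\pa D$ for almost every $\zeta\in\TT$. Since $D$ is bounded, $f$ and $g$ are bounded holomorphic maps, hence each coordinate is the Poisson integral of its boundary values; consequently $f^*=g^*$ a.e.\ would force $f=g$. Thus there is a set $E\su\TT$ of positive Lebesgue measure on which $f^*(\zeta)\neq g^*(\zeta)$ while both points lie in $\pa D$.

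Next I would consider the midpoint map $h:=\tfrac12(f+g):\DD\lon D$, which lands in $D$ by convexity and satisfies $h(\la_j)=\tfrac12(f(\la_j)+g(\la_j))=f(\la_j)$ for $j=1,\ldots,m$. For a.e.\ $\zeta$ the nontangential limit $h^*(\zeta)=\tfrac12(f^*(\zeta)+g^*(\zeta))$ exists. For $\zeta\in E$ the two points $f^*(\zeta),g^*(\zeta)\in\ov D$ are distinct, so strict convexity (with $t=\tfrac12$) gives $h^*(\zeta)=\tfrac12 f^*(\zeta)+\tfrac12 g^*(\zeta)\in D$. Hence $h^*(\zeta)\notin\pa D$ on the positive-measure set $E$, i.e.\ the map $h$ is \emph{not} almost proper.

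The key leverage is the contrapositive of the preceding corollary: a map of $D$ that fails to be almost proper cannot be a weak $m$-extremal for any nodes, in particular not for $\la_1,\ldots,\la_m$. Therefore, by the very definition of weak $m$-extremality, there exists $\wi h\in\OO(\CDD,D)$ with $\wi h(\la_j)=h(\la_j)=f(\la_j)$, $j=1,\ldots,m$. This map interpolates the values of $f$ at the given nodes, which contradicts the weak $m$-extremality of $f$ for $\la_1,\ldots,\la_m$. The contradiction forces $f=g$.

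I expect the only genuinely delicate point to be the measure-theoretic bookkeeping at the boundary: justifying that $h^*=\tfrac12(f^*+g^*)$ a.e., that $f\neq g$ implies $f^*\neq g^*$ on a set of positive measure (boundary uniqueness for bounded holomorphic functions via the Poisson representation), and that a positive-measure set of \emph{interior} boundary limits genuinely destroys almost properness. All remaining steps --- convexity of $D$ to place $h$ inside $D$, strict convexity to push the boundary midpoints into $D$, and the definition of weak $m$-extremality to close the loop --- are immediate once these boundary facts are secured.
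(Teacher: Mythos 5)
Your proof is correct and follows essentially the same route as the paper's: the midpoint map $h=\frac 12(f+g)$, the almost-properness corollary for weak $m$-extremals of bounded convex domains, strict convexity applied to boundary values, and boundary uniqueness for bounded holomorphic maps via the Poisson representation. The paper merely organizes it directly (any interpolant of $h$ at the nodes would interpolate $f$, so $h$ is itself a weak $m$-extremal, hence almost proper, and strict convexity forces $f^*=g^*$ a.e.), whereas you run the same ingredients as a contradiction --- a purely organizational difference, with your appeal to almost properness of $f$ and $g$ themselves being redundant since the paper's definition of strict convexity only needs $f^*,g^*\in\ov D$.
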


\begin{proof}
The map $h:=\frac 12(f+g):\DD\lon D$ is a weak $m$-extremal for $\la_1,\ldots,\la_m$, whence $h$ is almost proper. As $h^*=\frac 12(f^*+g^*)$ almost everywhere on $\TT$, it follows that $f^*=g^*$ almost everywhere and $f=g$.
\end{proof}

\begin{rem}
In case of the ball we can get Corollary \ref{21} by induction. In fact, for $m=2$ it is the classical result. Step $m\Longrightarrow m+1$: one may assume that $\la_{m+1}=0$ and $f(0)=g(0)=0$. Then $f(\la)=\la\phi(\la)$ and $g(\la)=\la\psi(\la)$, where $\phi,\psi$ are either $m$-extremals of $\BB_n$ or constants lying in $\pa\BB_n$. As $\phi(\la_j)=\psi(\la_j)$, $j=1,\ldots,m$, the claim follows.

On the other side, in any complex ellipsoid, equality on $m-1$ points does not suffice to claim that $f=g$. The examples are $m$-geodesics $f:=(B,0,\ldots,0)=:-g$, where $B$ is a~Blaschke product of degree $m-1$, having all zeros distinct.
\end{rem}

\begin{rem}
Recall that for 2-geodesics $f,g$ of a convex complex ellipsoid, the condition $f(\la_j)=g(\mu_j)$, $j=1,2$, where $\la_1,\la_2\in\DD$ are distinct and $\mu_1,\mu_2\in\DD$ are distinct, implies that $f=g\circ a$ for some $a\in\aut(\DD)$, see \cite[Proposition 16.2.2]{jp}. 

For $m\geq 3$ there is no an analogous property. Indeed, consider 3-geodesics $f(\la):=(\la m_\alpha(\la),0,\ldots,0)$ and $g(\la):=(\la m_\beta(\la),0,\ldots,0)$, where $\alpha,\beta\in\DD$, $\alpha\neq\beta,-\beta$. Then for any $\la\in\DD$ there is $\mu\in\DD$ such that $f(\la)=g(\mu)$, however there is no $a\in\aut(\DD)$ satisfying $f=g\circ a$ (clearly, the mappings $f$ and $g$~are not equivalent in case of the ball).

More generally, for any finite non-constant Blaschke products $B,\wi B$ there are infinite sets of different $\la$'s and $\mu$'s with $(B(\la),0,\ldots,0)=(\wi B(\mu),0,\ldots,0)$. Although, it may happen that there is no Blaschke product $B_1$ with $B=\wi B\circ B_1$ or $\wi B=B\circ B_1$, e.g. if $\deg B$ does not divide $\deg\wi B$ and vice versa (moreover, $(B,0,\ldots,0)$ and $(\wi B,0,\ldots,0)$ are not equivalent in the ball).
\end{rem}

\bigskip

We pass to problems concerning properness.

\begin{rem}
\begin{enumerate}[$(a)$]
\item Any weak $m$-extremal of a non-simply connected taut planar domain is neither proper nor almost proper. It follows from Proposition \ref{27}, infiniteness of the covering and the identity principle.
\item Any $m$-geodesic is proper.
\end{enumerate}
\end{rem}

We do not know whether any $m$-extremal is (almost) proper (P\ref{pr}).

\bigskip

Natural is the question about behavior of (weak) $m$-extremals and $m$-geodesics under compositions with proper holomorphic maps (with both sides). The problem trivializes in two cases. Indeed, if $f$ is an $m$-geodesic and $B$ is a finite non-constant Blaschke product, then $f\circ B$ is some $k$-geodesic. Note also that the mapping $$\CC\sm\{0,1\}\ni\la\longmapsto\frac{1}{\la(\la-1)}\in\CC_*$$ is proper, but $\CC\sm\{0,1\}$ has weak $m$-extremals, whereas $\CC_*$ not.

We have two simple results (cf. (P\ref{fB}) and (P\ref{inv})).

\begin{prop}
Let $D\su\CC^n$ be a convex domain and let $f:\DD\lon D$ be an $m$-extremal. Assume that $B$ is a Blaschke product of degree $k\in\NN$. Then $f\circ B:\DD\lon D$ is a weak $mk$-extremal.
\end{prop}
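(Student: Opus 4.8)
The plan is to exhibit a tuple of $mk$ points at which $f\circ B$ admits no relatively compact interpolation, by pushing any hypothetical interpolant back down through $B$. Recall that a finite Blaschke product of degree $k$ is a proper holomorphic self-map $B:\DD\lon\DD$ of degree $k$. First I would choose distinct $\la_1,\ldots,\la_m\in\DD$ that are regular values of $B$, which is possible since $B$ has only finitely many critical values; then each fibre $B^{-1}(\la_j)$ consists of exactly $k$ distinct points, and the union of these $m$ fibres is a set of $mk$ distinct points $\mu_1,\ldots,\mu_{mk}\in\DD$. The claim is that $f\circ B$ is a weak $mk$-extremal for this tuple.

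Suppose to the contrary that it is not. Then there is $h\in\OO(\CDD,D)$ with $h(\mu_i)=(f\circ B)(\mu_i)=f(B(\mu_i))$ for every $i$. The key step is to symmetrize $h$ along the fibres of $B$: I would define
$$H(\la):=\frac{1}{k}\sum_{B(\mu)=\la}h(\mu),\quad\la\in\DD,$$
where the sum is taken over the $k$ preimages counted with multiplicity. Being the normalized trace of $h$ under the proper map $B$, the mapping $H$ is holomorphic on $\DD$; coordinatewise it is, up to the factor $1/k$, a power sum of the fibre values, hence a holomorphic symmetric function of the local branches of $B^{-1}$.

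Two properties of $H$ then force the contradiction. First, each value $H(\la)$ is a convex combination of the points $h(\mu)\in h(\CDD)$, with weights $m_\mu/k$ summing to $1$; since $D$ is convex and $h(\CDD)$ is a compact subset of $D$, the convex hull $\txn{conv}(h(\CDD))$ is again a compact subset of $D$, so $H(\DD)\su\su D$. Second, for each $j$ every $\mu$ with $B(\mu)=\la_j$ satisfies $h(\mu)=f(B(\mu))=f(\la_j)$, whence $H(\la_j)=f(\la_j)$. Thus $H$ interpolates $f$ at $\la_1,\ldots,\la_m$ with relatively compact image, which by Lemma \ref{28}$(a)$ contradicts the $m$-extremality of $f$ (applied to the points $\la_1,\ldots,\la_m$).

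I expect the main obstacle to be the rigorous justification that the fibrewise average $H$ is holomorphic across the critical points of $B$: away from the critical values it is locally a finite sum of compositions of $h$ with the holomorphic branches of $B^{-1}$, but at a branch point one must argue through the elementary symmetric functions of the roots of $B(\mu)-\la$ (equivalently, invoke the standard pushforward of a finite proper holomorphic map) to see that the apparent singularity is removable. The remaining ingredients — compactness of the convex hull of a compact subset of a convex domain, and the bookkeeping with multiplicities in the convex-combination estimate — are routine.
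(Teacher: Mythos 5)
Your proposal is correct and is essentially the paper's own proof: the paper also takes the $mk$ points to be the fibres $B^{-1}(\{\mu_1,\ldots,\mu_m\})$ over regular values, forms the fibrewise average $\frac{1}{k}(h\circ B_{\mu,1}+\ldots+h\circ B_{\mu,k})$ of the local inverses (your trace $H$), glues it on $\DD\sm B(M)$, and removes the finitely many critical values by boundedness, with convexity of $D$ guaranteeing the averaged map still has relatively compact image. The obstacle you flag (holomorphy of $H$ across critical points) is handled in the paper exactly as you anticipate, via the removable singularity theorem for the bounded glued map, so your argument needs no repair.
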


\begin{proof}
Let $M:=\{\la\in\DD:B'(\la)=0\}$ and let $\mu_1,\ldots,\mu_m\in\DD\sm B(M)$ be different. We will show that $f\circ B$ is a weak $mk$-extremal for elements of the set $\Lambda:=B^{-1}(\{\mu_1,\ldots,\mu_m\})$ (the structure of proper holomorphic mappings is used, cf. \cite{bed} and \cite[Chapter 15]{Rud}). Suppose that there exists $h\in\OO(\DD,D)$ such that $h(\la)=f(B(\la))$, $\la\in\Lambda$, and $h(\DD)\su\su D$. For any $\mu\in\DD\sm B(M)$ let $B_{\mu,1},\ldots,B_{\mu,k}$ denote the local inverses of $B$ in a neighborhood $U_\mu$ of $\mu$. Then $$\frac 1k (h\circ B_{\mu,1}+\ldots+h\circ B_{\mu,k})=\frac 1k(h\circ B_{\nu,1}+\ldots+h\circ B_{\nu,k})\text{\, on\, }U_\mu\cap U_\nu$$ for $\mu,\nu\in\DD\sm B(M)$. We glue these mappings to $g\in\OO(\DD\sm B(M),D)$. Then $g(\mu_j)=f(\mu_j)$ for any $j$ and $g(\DD\sm B(M))\su\su D$. Clearly, $g$ extends holomorphically to $\DD$ and the extension has a relatively compact image, contradiction.
\end{proof}

\begin{rem} 
The property of being some (weak) $m$-extremal (resp. $m$-geodesic) is not invariant under proper holomorphic mappings in different dimensions. Indeed, there exists a function $u$ harmonic in $\DD$, continuous to the boundary and such that its harmonic conjugate $v$ is not continuous on $\CDD$. We give an example from \cite[p.~253]{zy} $$u(e^{it}):=\sum_{j=2}^\infty\frac{\sin jt}{j\log j},\quad t\in\RR.$$ Adding a constant, we can assume that $u<0$ in $\CDD$. Define $\wi u:=1/2\log(1-e^{2u})$ on $\TT$, extend it harmonically to $\DD$ and take $\wi v$ as its harmonic conjugate. The map $\Phi:=(e^{u+iv},e^{\wi u+i\wi v}):\DD\lon\BB_2$ is proper, but $\Phi\circ\id_\DD$ does not extend to $\CDD$, so it is not any weak $m$-extremal of $\BB_2$.
\end{rem}

\bigskip

Following the proof of \cite[Proposition 9]{ek} we get the last result.

\begin{prop}[cf. \cite{ek}, Proposition 9] 
Let $D\su\CC^n$ be a domain and let $f:\DD\lon D$ be a holomorphic mapping such that for some $\gamma>0$ we have \begin{equation}\label{20}\dist(f(\la),\pa D)\ge\gamma(1-|\la|),\quad\la\in\DD.\end{equation} Assume that $f$ is a weak $m$-extremal for $\la_1,\ldots,\la_m$. Then $f'(\la_j)\neq 0$ for at least two $j$'s.
\end{prop}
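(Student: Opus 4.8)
The plan is to prove the contrapositive: if $f'(\la_j)=0$ for at least $m-1$ of the indices $j$, then $f$ is not a weak $m$-extremal for $\la_1,\ldots,\la_m$. I would establish this by producing an interpolating map $h\in\OO(\CDD,D)$ with $h(\la_j)=f(\la_j)$, $j=1,\ldots,m$, which directly contradicts weak $m$-extremality. Under the hypothesis at most one node, say $\la_m$ after relabeling, carries a nonzero derivative. Since both weak $m$-extremality and the bound $\dist(f(\la),\pa D)\ge\gamma(1-|\la|)$ survive precomposition with an automorphism of $\DD$ (the distance bound up to a change of the constant $\gamma$, because $1-|a(\la)|\ge c(1-|\la|)$ for $a\in\aut(\DD)$), I would first normalize: compose $f$ with $a\in\aut(\DD)$ sending $0$ to $\la_m$. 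This places the single possibly-bad node at the origin while preserving $f'(\la_j)=0$ for $j=1,\ldots,m-1$, as $a'$ never vanishes. Thus we may assume $\la_m=0$.

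Next I would run the shrink-and-correct scheme from the proof of Lemma \ref{28}. For $t\in(0,1)$ set $w_j(t):=f(\la_j)-f(t\la_j)$ and
$$h_t:=f(t\,\cdot)+P_{w(t)},$$
where $P_{w(t)}$ is the Lagrange interpolation polynomial of that proof, so that $h_t(\la_j)=f(\la_j)$ for $j=1,\ldots,m$ and $h_t$ is holomorphic on the neighborhood $\frac1t\DD$ of $\CDD$. The whole point of the normalization is that every correction is now second order in $1-t$: at the origin node $w_m(t)=f(0)-f(0)=0$, while for $j=1,\ldots,m-1$ the vanishing $f'(\la_j)=0$ and Taylor's formula give $w_j(t)=f(\la_j)-f(t\la_j)=O((1-t)^2)$. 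Since $P_{w(t)}$ depends linearly on $w(t)$, this yields $\|P_{w(t)}\|_{\CDD}\le C\max_j|w_j(t)|=O((1-t)^2)$.

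Finally I would combine this with the distance estimate. For $|\la|\le 1$ the point $t\la$ lies in $\DD$ and $\dist(f(t\la),\pa D)\ge\gamma(1-t|\la|)\ge\gamma(1-t)$, so
$$\dist(h_t(\la),\pa D)\ge\gamma(1-t)-C(1-t)^2=(1-t)\bigl(\gamma-C(1-t)\bigr)>0$$
for all $t$ sufficiently close to $1$. Hence $h_t(\CDD)$ is a compact subset of $D$, giving $h_t\in\OO(\CDD,D)$ interpolating $f$ at $\la_1,\ldots,\la_m$, the desired contradiction. The only genuinely delicate point is the bookkeeping that forces every correction term to be second order; this is exactly why sending the unique node with possibly nonzero derivative to the origin is essential, since otherwise that node's correction would be of size comparable to $\gamma(1-t)$ — the regime that necessitates the more elaborate concentration argument of the almost-properness proposition. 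I expect the remaining estimates to be routine once this normalization is in place.
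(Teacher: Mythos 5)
Your proposal is correct and takes essentially the same route as the paper: both normalize by an automorphism of $\DD$ sending the one possibly-nonvanishing node to the origin (noting that the bound \eqref{20} survives with a new constant), then correct $f(t\,\cdot)$ by a Lagrange-type polynomial whose coefficients are $O((1-t)^2)$ thanks to $f'(\la_j)=0$, which is beaten by the lower bound $\dist(f(t\la),\pa D)\ge\gamma(1-t)$ for $t$ close to $1$. The only cosmetic difference is that you include the origin as an interpolation node with zero datum, while the paper builds the vanishing at $0$ into the basis via the factor $\la/\mu_j$ — the resulting correction polynomial is identical.
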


\begin{proof}
Suppose contrary, say $f'(\la_j)=0$, $j=1,\ldots,m-1$. Then $g:=f\circ m_{-\la_{m}}$ is a weak $m$-extremal for some $\mu_1,\ldots,\mu_{m-1},0$ and $g'(\mu_j)=0$ for any $1\leq j\leq m-1$. Moreover, condition \eqref{20} for $g$ holds with possibly another constant.

For $t\in(0,1)$ consider the mapping $$h_t(\la):=g(t\la)+\sum_{j=1}^{m-1}\left(\frac{\la}{\mu_j}\prod_{k\neq j}\frac{\la-\mu_k}{\mu_j-\mu_k}\right)(g(\mu_j)-g(t\mu_j)),\quad\la\in\DD.$$ Then $h_t$ interpolates $g$ at $\mu_1,\ldots,\mu_{m-1},0$ and $\|\psi_t\|_{\DD}\to 0$ as $t\to 1$, where $$\psi_t(\la):=\frac{h_t(\la)-g(t\la)}{1-t}.$$ Hence, for $t$ sufficiently close to 1 we have $h_t(\DD)\su\su D$.
\end{proof}

\section{List of problems}
\begin{enumerate}[(P1)]
\item\label{2e2g} Does there exist a 2-extremal, which is not a 2-geodesic?
\item\label{mk} Does there exist an $m$-extremal being not any $k$-geodesic?
\item\label{m+1} Let $D\su\CC^n$ be a $k$-balanced pseudoconvex domain and let $f:\DD\lon D$ be an $m$-extremal. Assume that $k_1,\ldots,k_n\leq 1$. Decide whether the mapping $\psi(\la):=(\la^{k_1}f_1(\la),\ldots,\la^{k_n}f_n(\la))$ is an $(m+1)$-extremal.
\item\label{37} Let $f:\DD\lon\ee(p)$ be a 4-geodesic such that $f(\la)=\la\phi(\la)$, $\phi\in\OO(\DD,\ee(p))$. Does it follow that $\phi$ is a 3-geodesic?
\item\label{33} Is any weak $m$-extremal of a convex domain an $m$-extremal?
\item\label{35} Decide whether any map of the form \eqref{22} is some (weak) $l$-extremal or $l$-geodesic.
\item\label{ep} Does weak $m$-extremality coincide with $m$-extremality in any convex complex ellipsoid?
\item\label{26} Decide whether any non-constant map $(a_1B_1,\ldots,a_nB_n)$ ($a\in\pa\ee(p)$, $B_j$'s finite Blaschke products) is some (weak) $m$-extremal or $m$-geodesic.
\item\label{25} Is any $m$-extremal of $\BB_n$ equivalent with some $(a_1B_1,\ldots,a_nB_n)$?
\item\label{mkball} Is any $m$-extremal of $\BB_n$ some $k$-geodesic?
\item\label{b} Let $0<a<1$. Does it follow that the mapping $f(\la):=(a\la,\sqrt{1-a^2}\la^m)$ is an $(m+1)$-geodesic of $\BB_2$?
\item\label{pr} Decide whether any $m$-extremal is (almost) proper.
\item\label{fB} Let $f$ be a (weak) $m$-extremal and $B$ a finite non-constant Blaschke product. Does it follow that $f\circ B$ is some (weak) $k$-extremal?
\item\label{inv} Is the property of being some $m$-extremal (resp. $m$-geodesic) invariant under proper holomorphic mappings in the same dimension? 
\end{enumerate}

\bigskip

\textsc{Acknowledgements.} I would like to thank \L. Kosi\'nski and W. Zwonek. Their help, especially their ideas, had great impact on the work.

\end{document}